\numberwithin{equation}{section}
\newtheorem{theo}{Theorem}[section]
\newtheorem{rem}{Remark}[section]
\newtheorem{cl}{Claim}[section]
\newtheorem{prop}{Proposition}[section]
\newcommand{\eps}{\varepsilon}
\newcommand{\R}{\mathbb{R}}
\begin{document}

\title[]{Maximizing one Laplace eigenvalue on n-dimensional manifolds}
\author{Romain Petrides} 
\address{Romain Petrides, Universit\'e de Paris, Institut de Math\'ematiques de Jussieu - Paris Rive Gauche, b\^atiment Sophie Germain, 75205 PARIS Cedex 13, France}
\email{romain.petrides@imj-prg.fr}

\begin{abstract} 
We prove existence and regularity results for the problem of maximization of one Laplace eigenvalue with respect to metrics of same volume lying in a conformal class of a Riemannian manifold of dimension $n\geq 3$.
\end{abstract}

\maketitle

Let $M$ be a smooth compact manifold of dimension $n$. Given a Riemannian metric $g$ on $M$, we denote the sequence of eigenvalues associated to the Laplace operator $\Delta_g = -div_g \nabla$
$$ 0 = \lambda_0 \leq \lambda_1(g) \leq \lambda_2(g) \leq \cdots \leq \lambda_k(g) \leq \cdots \to +\infty \text{ as } k \to +\infty . $$
The eigenvalue $\lambda_0$ is associated to constant functions and if $M$ is connected, the other ones are positive, depending on $g$. We will focus on the following natural scale invariant functional
$$\bar{\lambda}_k(g) = \lambda_k(g)V_{g}(M)^{\frac{2}{n}}, $$
where $V_{g}(M)$ is the volume of $g$ on $M$. We let $[g] = \{ e^{2u} g ; u \in \mathcal{C}^{\infty}\left(M\right) \}$ be a given conformal class of metrics. In the current paper, we aim at studying the following variational problem
$$ \Lambda_k(M,[g]) = \sup_{\tilde{g} \in [g]} \lambda_k(\tilde{g})V_{\tilde{g}}(M)^{\frac{2}{n}} = \sup_{\tilde{g} \in [g]} \bar{\lambda}_k(\tilde{g}).$$
Studying the maximization of $\bar{\lambda}_k$ is more relevant than the minimization because the well-known examples of Cheeger dumbells prove $\inf_{[g]} \bar\lambda_k = 0$.  Moreover, while it is standard in geometric analysis to restrict a functional with respect to metrics to a conformal class (e.g the Yamabe problem, problems for Q-curvature etc), there are deep reasons to do it in the context of spectral geometry:

On one side, $\Lambda_k(M,[g]) < +\infty$ was proved by Yang-Yau \cite{yangyau} for $k=1$ and $n=2$ and then by Li-Yau \cite{liyau} and El Soufi and Ilias, \cite{EI}, \cite{EI2} for $k= 1$ and $n\geq 2$ thanks to the conformal volume and was generalized by Korevaar \cite{Korevaar} to any $k,n$ (see also \cite{hassann}). On the other side, the supremum of the scale invariant functional $\bar{\lambda}_k(g)$ over all the metrics $g$ on $M$ is always $+\infty$ in dimension $n\geq 3$ \cite{cd}. In dimension $2$, the bound on $\Lambda_k(M,[g])$ is nothing but a tool to obtain an upper bound for $\bar{\lambda}_k(g)$ for any metric $g$ depending only on $k$ and the topology of $M$ (see Yang-Yau \cite{yangyau} for $k= 1$ and Korevaar \cite{Korevaar} for $k\geq 1$). It is then natural to compute $\Lambda_k(M,[g])$, depending on $k$ and the conformal class. As a fundamental result, it was proved by Hersch \cite{hersch} in dimension $2$ and El-Soufi Ilias \cite{EI2} for $n\geq 3$ that 
$$ \Lambda_1( \mathbb{S}^n,[h]) = \bar\lambda_1\left( \mathbb{S}^n, h \right) = n \omega_n^{\frac{2}{n}}$$
where $h$ is the round metric on the $n$-sphere $\mathbb{S}^n$ and $\omega_n$ is its volume, and that the round metric is the unique maximizer. Many other computations of $\Lambda_k(M,[g])$ with existence (or not) of maximizers are given e.g in \cite{nadirashvili}, \cite{ces}, \cite{nadirashvili-2}, \cite{petrides-1}, \cite{petrides}, \cite{petrides-0}, \cite{knpp}, \cite{KS}, \cite{karpukhin-3}, \cite{kim}, and references therein.
 
Furthermore, as observed by Nadirashvili \cite{nadirashvili} and El Soufi-Ilias \cite{EI3}\cite{EI4}\cite{EI5}, critical metrics for $\bar{\lambda}_k$ correspond to harmonic maps into spheres. 
For instance and more precisely, in dimension $2$, the conformal factors $e^{2u}$ of the critical metrics $\tilde{g} = e^{2u} g$ for $\bar{\lambda}_k$ are nothing but densities of energy associated to the equation of harmonic maps into spheres with respect to $g$. 
This observation was crucial to expect regularity of critical metrics constructed by a suitable variational method, by the use of regularity theorems for weakly-harmonic maps.

Finally, studying the variational problem $\Lambda_k(M,[g])$ is a first step to look for critical metrics of $\bar{\lambda}_k(g)$ over all the metrics. As observed by Nadirashvili \cite{nadirashvili} and El Soufi-Ilias \cite{EI3}, critical metrics with respect to variations in the space of 2-symmetric tensors arise as induced metrics of minimal immersions of $M$ into spheres, and conversely, any induced metric of a minimal immersion into a sphere can be seen as the critical metric of one of the functionals $\bar{\lambda}_k$. 
For instance, existence of minimal immersions into sphere by first eigenfunctions is given in \cite{petrides} and \cite{MS3} by maximization of $\lambda_1$ over all the metrics on any surface of dimension 2. While maximization over all the metrics is not possible in dimension $n\geq 3$ \cite{cd}, one approach by min-max was given by Friedlander and Nadirashvili \cite{FN}. 
Their invariant was essentially computed in dimension $2$ in \cite{petrides} \cite{KMed} and is often not realized. Notice that, the variational techniques developped in the current paper and in \cite{petrides-7} are substantial to initiate construction of minimal immersions into spheres by a min-max method on eigenvalues $\bar{\lambda}_k$. 

\medskip

In the past decades, many works have been done in dimension $2$ to compute $\Lambda_k(M,[g])$ and to give methods to prove existence and regularity of maximal metrics, after the seminal work by Nadirashivli on the torus \cite{nadirashvili} and Fraser-Schoen for Steklov eigenvalues on surfaces of genus zero \cite{fs}. In the current paper, we are interested in the generalization to $n\geq 3$ of the study for any $k$ and $M$ with $n=2$ given in \cite{petrides} \cite{petrides-2} and \cite{petrides-7}. They are based on a construction of maximizing sequence of metrics $e^{2u_\eps} g$ and associated maps $\Phi_\eps : M \to \R^{p_\eps}$ which are "almost" harmonic in the following sense:
$$ \Delta_g \Phi_\eps = \lambda_k(e^{2u_\eps} g) e^{2u_\eps} \Phi_\eps $$
and there is $\delta_\eps \to 0$ such that
$$ \left\vert \Phi_\eps \right\vert^2 \geq 1-\delta_\eps \text{ and } \int_M  \left\vert\Phi_\eps \right\vert^2 e^{2u_\eps} = \int_M e^{2u_\eps} = 1.$$
These maps are harmonic into a sphere if and only if $\left\vert \Phi_\eps \right\vert^2 = 1$. In \cite{petrides} and \cite{petrides-2}, this sequence is built by maximization of a regularized functional depending on a parameter $\eps$, and in \cite{petrides-7} we simplify the selection of this maximizing sequence thanks to a new concept of Palais-Smale sequences for the functional $\bar{\lambda}_k$ in the level sets $\bar{\lambda}_k \geq \Lambda_k(M,[g]) - \eps$. The goal is then to pass to the limit as $\eps\to 0$ on $(\Phi_\eps)$ using the elliptic estimates on this super-critical system of equations (in the sense that 
$e^{2u_\eps}$ only belongs to $L^1$ and $\Phi_\eps$ is not uniformly bounded). 
If it is possible, we have that $e^{2u_\eps}dv_g\to \nu$ for the weak-$\star$ topology of measures and 
$\Phi_\eps \to \Phi$,  $\lambda_k(e^{2u_\eps} g)\to \lambda$  so that
$$ \Delta_g \Phi = \lambda \nu \Phi \text{ and } \left\vert \Phi \right\vert^2 = 1 $$
in a weak sence. Computing $0 = \Delta \left\vert \Phi \right\vert^2$ gives that $\nu = \frac{\left\vert \nabla \Phi \right\vert^2}{\lambda} dv_g$ so that the limit equation is the equation of weak harmonic maps, known to be smooth and stongly harmonic, so that the limiting measure is smooth.

\medskip

However, establishing existence and regularity of metrics achieving $\Lambda_k(M,[g])$ in dimension $n\geq 3$ involves other difficulties. The first one is that contrary to dimension $n=2$, if $n\geq 3$ the Dirichlet energy and the Laplacian are not conformally invariant: for a metric $\tilde{g} = e^{2u}g$, we have
$$\lambda_k(\tilde{g}) = \min_{E \in \mathcal{G}_{k+1}\left(\mathcal{C}^\infty(M)\right)} \max_{\varphi \in E\setminus \{0\}} \frac{\int_M \left\vert\nabla \varphi \right\vert_g^2 e^{(n-2)u}dv_g}{\int_M \varphi^2 e^{nu}dv_g} $$
so that the possible degenerescence of maximizing sequence of conformal factors $e^{2u_\eps}$ not only appear at the right-hand side of the elliptic equation for eigenfunctions, as in dimension $2$ (already leading to a super-critical elliptic equations because we only have a $L^1$ control of $e^{nu_\eps}$)
$$ -div_g\left( e^{\left(n-2\right) u_\eps} \nabla \varphi_\eps  \right) = \lambda_k(e^{2u_\eps} g)  e^{nu_\eps} \varphi_\eps, $$
but also at the left-hand side, allowing to loose the elliptic properties of the operator $-div_g\left( e^{\left(n-2\right) u_\eps} \nabla . \right)$ as $\eps\to 0$. The second one is that in dimension $2$ there are bounds on the multiplicity of the eigenvalue $\lambda_k(g)$ depending only on $k$ and the topology $M$, while it is not the case in dimension $n\geq 3$ (see \cite{ColinV}), even with restriction in a conformal class. This boundedness was often used to initiate compactness arguments on the sequence of maximizing metric associated to an "almost critical" system of equations approaching the system of equations of a harmonic map into a sphere. Indeed, the number of equations in the system is automatically uniformly bounded for $n=2$. This is \textit{a priori} not true in higher dimensions.

\medskip

In the following result, we overcome these problems by establishing a natural generalization of the maximization results for $\Lambda_k(M,[g])$ in dimension $2$ to higher dimensions:
\begin{theo} \label{theomain}
Let $(M, [g])$ be a compact connected manifold of dimension $n\geq 3$ endowed with a conformal class and $k\geq 1$. If
$$ \Lambda_k(M,[g]) > \Lambda_k(\widetilde{M},[\widetilde{g}]) $$
for any $(\widetilde{M},[\widetilde{g}]) = (M,[g]) \sqcup  \left(\mathbb{S}^n,[h]\right)\sqcup \cdots \sqcup \left(\mathbb{S}^n,[h]\right) $ or $ (\widetilde{M},[\widetilde{g}]) =  \left(\mathbb{S}^n,[h]\right)\sqcup \cdots \sqcup \left(\mathbb{S}^n,[h]\right) $ where $h$ is the round metric on $\mathbb{S}^n$, then for some $\alpha>0$, there is a non-negative factor $f \in \mathcal{C}^{0,\alpha}\left(M\right) \cap \mathcal{C}^{\infty}(M\setminus Z)$ where $Z = \{ z \in M ; f(z) = 0 \}$ such that 
$$ \lambda_k(f g) \left(\int_M f^{\frac{n}{2}}dv_g\right)^{\frac{2}{n}} = \Lambda_k(M,[g]) $$
Moreover, $f = \frac{\left\vert \nabla \Phi \right\vert_g^2}{\lambda_k(f g)}$, where $\Phi : M \to \mathbb{S}^p$ is some $n$-harmonic map into a sphere, whose coordinate functions are eigenfunctions with respect to $\lambda_k(f g)$.
\end{theo}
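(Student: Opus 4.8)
The plan is to follow the strategy sketched in the introduction, adapting the two-dimensional variational scheme of \cite{petrides-7} to the conformally non-invariant setting of dimension $n\geq 3$. First I would set up a regularized maximization problem: for $\eps>0$ replace the eigenvalue functional by a perturbation that is coercive enough to admit a smooth maximizer $u_\eps$, producing metrics $e^{2u_\eps}g$ of unit volume together with an $L^2(e^{nu_\eps}dv_g)$-orthonormal family of eigenfunctions whose vector $\Phi_\eps:M\to\R^{p_\eps}$ satisfies the almost-harmonic system
$$ -\mathrm{div}_g\!\left( e^{(n-2)u_\eps}\nabla\Phi_\eps\right) = \lambda_k(e^{2u_\eps}g)\, e^{nu_\eps}\,\Phi_\eps,\qquad |\Phi_\eps|^2\geq 1-\delta_\eps,\ \int_M|\Phi_\eps|^2 e^{nu_\eps}dv_g=1, $$
with $\delta_\eps\to 0$. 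The key normalization is to identify $e^{nu_\eps}dv_g$ with (a perturbation of) the energy density $\frac{1}{\lambda_k}|\nabla\Phi_\eps|_g^n\,dv_g$ of an $n$-harmonic-type map, which is the correct scale-invariant object in dimension $n$; this is where the left-hand-side degeneration of the operator $-\mathrm{div}_g(e^{(n-2)u_\eps}\nabla\,\cdot)$ must be controlled, and it is the heart of the argument.

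Next I would run a bubbling/concentration-compactness analysis on $(\Phi_\eps)$ as $\eps\to 0$. Because the multiplicity of $\lambda_k$ and hence $p_\eps$ is not a priori bounded when $n\geq 3$, I would not try to bound the number of equations directly; instead I would track the measures $e^{nu_\eps}dv_g\rightharpoonup\nu$ weakly-$\star$ and perform a blow-up at each concentration point. The standard rescaling of a conformally degenerating metric in a conformal class produces, in the limit, a harmonic (here $n$-harmonic) map from $\mathbb{S}^n$ realizing $\Lambda_k(\mathbb{S}^n,[h])=n\omega_n^{2/n}$ on the bubble; an energy-quantization/no-neck argument then shows that the total invariant splits as $\Lambda_k(M,[g])$ for the weak limit on $M$ plus a sum of sphere contributions $\Lambda_k(\mathbb{S}^n,[h])$ over the bubbles. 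The strict inequality hypothesis
$$ \Lambda_k(M,[g]) > \Lambda_k\big((M,[g])\sqcup(\mathbb{S}^n,[h])\sqcup\cdots\big)\ \text{and}\ > \Lambda_k\big((\mathbb{S}^n,[h])\sqcup\cdots\big) $$
is exactly what forbids every possible bubble, so no energy escapes: $\Phi_\eps$ converges strongly, $\nu$ has no atoms and charges all of $M$, and $\lambda_k(e^{2u_\eps}g)\to\lambda=\Lambda_k(M,[g])\,V_\nu(M)^{-2/n}$ with the limit a genuine, not degenerate, eigenvalue of $\Delta_\nu$.

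Finally I would upgrade the weak limit to the regularity claimed. Passing to the limit in the system yields $-\mathrm{div}_g(|\nabla\Phi|_g^{n-2}\nabla\Phi)=\lambda\,|\nabla\Phi|_g^n\,\Phi$ with $|\Phi|^2=1$ in the weak sense, i.e.\ $\Phi$ is a weakly $n$-harmonic map into $\mathbb{S}^p$; testing against $\Phi$ and computing $\tfrac12\Delta|\Phi|^2=0$ identifies the limiting measure as $\nu=\frac{|\nabla\Phi|_g^n}{\lambda}\,dv_g$ up to normalization, so the conformal factor is $f=\frac{|\nabla\Phi|_g^2}{\lambda_k(fg)}$. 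Then I would invoke regularity theory for $n$-harmonic maps into spheres (after rescaling, these are stationary $n$-harmonic maps, for which Hölder regularity holds away from a small singular set, and in fact full regularity into spheres by the Fuglede–Hardt–Lin / Takeuchi-type results in the $p=n$ case) to get $\Phi\in\mathcal{C}^{1,\beta}$ outside the zero set $Z=\{|\nabla\Phi|=0\}$, hence $f\in\mathcal{C}^{0,\alpha}(M)\cap\mathcal{C}^\infty(M\setminus Z)$ by elliptic bootstrap on the now-uniformly-elliptic eigenfunction equation off $Z$; Hölder continuity of $f$ across $Z$ follows from the energy-density estimates on almost-harmonic maps. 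The main obstacle I anticipate is the bubbling step: controlling the left-hand-side degeneration well enough to get a clean energy quantization into sphere bubbles — in particular ruling out "necks" carrying nontrivial energy and handling the possibly unbounded number $p_\eps$ of components — and this is precisely where the unbounded-multiplicity phenomenon peculiar to $n\geq 3$ must be tamed, likely by working with the measures and the map $\Phi_\eps$ rather than with the eigenspaces directly.
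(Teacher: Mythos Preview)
Your outline captures the broad strategy correctly, but it passes over two specific technical devices that the paper introduces precisely to handle the difficulties you flag, and without them the argument as written would not close.

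First, you propose to run the Palais--Smale construction directly on conformal factors $e^{2u_\eps}$, obtaining the almost-harmonic system with $e^{(n-2)u_\eps}$ inside the divergence. The paper does \emph{not} do this. Instead it relaxes the problem by splitting the conformal factor into two independent weights $(\alpha,\beta)$ and maximizing the functional
\[
\bar\lambda_k(\alpha,\beta)=\lambda_k(g,\alpha,\beta)\,\frac{\int_M\beta\,dv_g}{\big(\int_M\alpha^{n/(n-2)}dv_g\big)^{(n-2)/n}},
\]
then showing (Theorem~\ref{theosplit}) that the supremum coincides with $\Lambda_k(M,[g])$ and that maximizers must satisfy $\beta=\alpha^{n/(n-2)}$. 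The point is that the Palais--Smale condition for this two-variable functional yields, in addition to $|\Phi_\eps|^2\geq 1-\delta_\eps$, the crucial \emph{pointwise} bound
\[
|\nabla\Phi_\eps|_g^2 \leq Q_\eps^{2/(n-2)}+\delta_\eps,
\]
where $Q_\eps=\lambda_\eps^{(2-n)/2}\alpha_\eps$ is the weight in the divergence term. This is exactly the missing control on the left-hand-side degeneration you identify as ``the heart of the argument'': it ties the possibly degenerating coefficient $Q_\eps$ to the gradient of the map, which is what later allows one to replace $Q_\eps$ by $|\nabla\Phi_\eps|^{n-2}$ up to $o(1)$ errors (equation~\eqref{eqQepsPhiepsdimn}) and thereby reduce to an $n$-harmonic-map situation. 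Working only with a single conformal factor $e^{2u_\eps}$, one does not see how to obtain any such pointwise relation, and your sentence ``identify $e^{nu_\eps}dv_g$ with a perturbation of $|\nabla\Phi_\eps|^n dv_g$'' is a wish rather than a mechanism.

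Second, your bubbling step and the passage to the limit are too soft. The paper does not obtain strong $W^{1,n}$ convergence directly from concentration-compactness; it uses a local $n$-harmonic \emph{replacement}: on each small ball away from finitely many bad points, $\Phi_\eps/\omega_\eps$ is compared to the minimizing $n$-harmonic map $\Psi_\eps$ with the same boundary values, and a delicate sequence of weighted energy identities (Claim~\ref{clmain}, Steps~1--6) shows $\int (P_\eps+Q_\eps)|\nabla(\Phi_\eps-\Psi_\eps)|^2\to 0$. This replacement is what produces an honest $n$-harmonic limit and identifies $\tilde\mu_0$ with $|\nabla\Phi_0|^n/\lambda^{n/2}\,dv_g$. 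Moreover, because $p_\eps$ is unbounded, all $\eps$-regularity and convergence estimates for $\Psi_\eps$ must be proved \emph{independently of the dimension of the target sphere}; the paper devotes Section~\ref{sec3} to this (in particular Claim~\ref{clindependanttargetmanifoldepsreg} and Proposition~\ref{proptaunapproximatedharmmap}). Citing off-the-shelf regularity for $n$-harmonic maps into $\mathbb{S}^p$ does not suffice here, since those results come with constants depending on $p$.
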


Notice that as in dimension $n=2$ (see \cite{petrides} and \cite{petrides-2}), the strict inequality assumptions we make are natural to ensure the compactness of the sequence of critical metrics, and more generally some compactness for maximizing sequences of metrics. These strict inequality assumptions are simple ways to prevent from bubbling that may happen (for more information on the bubble tree convergence in this context, see e.g \cite{petrides-2}, \cite{KS}, \cite{knpp20} for $n=2$ or the more general Theorem \ref{theosplit} and discussions around).

Notice also that as in dimension $n=2$, the conformal factor $f$ appears as the density of energy of a $n$-harmonic map into a sphere
$$ -div_g\left( \left\vert \nabla \Phi \right\vert^{n-2}_g \nabla \Phi \right) = \left\vert \nabla \Phi \right\vert^n_g \Phi. $$
Apparition of $n$-harmonic maps into a sphere in this context was already observed in \cite{KM} (see also \cite{pt}). In fact, $f$ is the limit of a maximizing sequence of conformal factors and we conclude the proof of the theorem by noticing that $f = \frac{\left\vert \nabla \Phi \right\vert_g^2}{\lambda_k(f g)}$, where $\Phi$ is a weak locally minimizing $n$-harmonic map into a sphere. The regularity theory for these maps in the litterature implies $\Phi \in \mathcal{C}^{1,\alpha} $ for some $\alpha\in (0,1)$, and not more. The lack of higher regularity is well known because the weight $\left\vert \nabla \Phi \right\vert^{n-2}_g$ inside the divergence term of the equations may vanish, so that we have a degenerate elliptic system. Therefore $f \in \mathcal{C}^{0,\alpha}$ is the optimal regularity. It is a very common conclusion when we look for conformal metrics as solutions of a variational problem (see e.g in \cite{AmmannHumbert} or \cite{GurskyPerez} the concept of generalized metrics). Notice also that even in dimension $2$, while the zero set $Z$ of $f$ has to be discrete, it may be non-empty. Therefore, even if the conformal factor $f$ is smooth for $n=2$, the associated metric may have conical singularities. Of course, $f \in \mathcal{C}^{\infty}\left(M\setminus Z\right)$ in the domain where the equation of $\Phi$ is elliptic and
 the metric $\tilde{g} = fg$ is regular. Since in $M\setminus Z$, $\left\vert \nabla \Phi \right\vert^2_{\tilde{g}} = \lambda_k(\tilde{g})$ and thanks to conformal invariance of the $n$-harmonic equation, the n-harmonic map into a sphere with respect to $g$ becomes a $2$-harmonic map into a sphere with respect to $\tilde{g} = fg$
$$ \Delta_{\tilde{g}} \Phi = \left\vert \nabla \Phi \right\vert^2_{\tilde{g}} \Phi $$
as was primarily observed by \cite{EI3} for smooth critical metrics. In fact, it is a $p$-harmonic map with respect to $\tilde{g}$ for any $p$, and $p=n$ is the adapted integer that gives a conformally invariant equation. It is a reason why we have to deal with $n$-harmonic maps.

Notice also that it is not clear that generalized metrics $f.g$ have a discrete spectrum and that any eigenvalue has a finite dimensional associated space of eigenfunctions. In the current paper, we actually prove that it holds true as a consequence of
\begin{theo} \label{theodiscretespectrum}
For the factor $f$ given by Theorem \eqref{theomain}, the embedding $W^{1,2}\left( f.g\right) \to L^{2}\left( f.g\right)$ is compact and the eigenfunctions with respect to $\Delta_{f.g}$ are $\mathcal{C}^{1,\alpha}$ functions.
\end{theo}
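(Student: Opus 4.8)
The plan is to exploit that $f = \frac{|\nabla\Phi|_g^2}{\lambda_k(fg)}$ is not an arbitrary $L^1$ weight but the energy density of an $n$-harmonic map $\Phi:M\to\mathbb{S}^p$, which by the regularity theory quoted after Theorem \ref{theomain} satisfies $\Phi\in\mathcal{C}^{1,\alpha}$, hence $f\in\mathcal{C}^{0,\alpha}(M)$ with zero set $Z$. The weight $w := f^{\frac{n-2}{2}}$ appearing in the Dirichlet form $\int_M|\nabla\varphi|_g^2 w\, dv_g$ is then $\mathcal{C}^{0,\alpha}$ and degenerates only on $Z$; since $f\in L^{n/2}$ the measure $f^{n/2}dv_g$ is finite and absolutely continuous with respect to $dv_g$. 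I would first record the precise structure of $Z$: because $\Phi$ is $\mathcal{C}^{1,\alpha}$ and $n$-harmonic, its critical set has locally finite $(n-n\alpha')$-Hausdorff-type measure, or at minimum Lebesgue measure zero, so $Z$ is a closed set of measure zero and $w>0$ a.e.

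Next I would set up the quadratic-form comparison. The key input is the sharp Sobolev-type inequality available in this setting: for the metric $\tilde g = fg$ one has a control of the form $\left(\int_M \varphi^2 f^{n/2}dv_g\right)^{2/n}\le C\int_M|\nabla\varphi|_g^2 f^{(n-2)/2}dv_g + C\int_M\varphi^2 f^{n/2}dv_g$, i.e. a Poincaré–Sobolev inequality for the generalized metric. Such an inequality follows from the variational characterization of $\Lambda_k$ together with the Li–Yau/Korevaar conformal-volume bound (the finiteness of $\Lambda_k(M,[g])$): indeed an inequality of this type, applied on small balls and summed via a covering, is exactly what forbids $\lambda_1(\tilde g)V_{\tilde g}(M)^{2/n}$ from being infinite. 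I would make this quantitative on geodesic balls $B$ of $(M,g)$, obtaining $\|\varphi\|_{L^2(B,f^{n/2})}^2 \le C(B)\big(\|\nabla\varphi\|_{L^2(B,f^{(n-2)/2})}^2 + \|\varphi\|_{L^2(B,f^{n/2})}^2\big)$ with a constant uniform over a finite cover, and then upgrade it to a genuinely improving (compact) embedding by the standard Rellich argument away from $Z$ plus an absolute-continuity estimate near $Z$: for any $\eta>0$ choose a neighbourhood $U_\eta\supset Z$ with $\int_{U_\eta} f^{n/2}dv_g<\eta$ (possible since $f^{n/2}\in L^1$ and $|Z|=0$), split a bounded sequence in $W^{1,2}(fg)$ accordingly, apply Rellich–Kondrachov on $M\setminus U_\eta$ where the weight is bounded below, and let $\eta\to0$ by a diagonal argument. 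This gives compactness of $W^{1,2}(fg)\hookrightarrow L^2(fg)$, hence a discrete spectrum with finite-dimensional eigenspaces.

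For the regularity of eigenfunctions, fix an eigenfunction $\varphi$ of $\Delta_{fg}$, so weakly $-\mathrm{div}_g\!\left(f^{(n-2)/2}\nabla\varphi\right) = \mu\, f^{n/2}\varphi$ on $M$. On $M\setminus Z$ the weight $f^{(n-2)/2}$ is $\mathcal{C}^{0,\alpha}$ and bounded below on compacta, the right-hand side lies in $L^\infty_{loc}$ after an initial Moser iteration giving $\varphi\in L^\infty_{loc}$, and De Giorgi–Nash–Moser plus Schauder for divergence-form operators with Hölder coefficients yield $\varphi\in\mathcal{C}^{1,\alpha}_{loc}(M\setminus Z)$; equivalently, since $\tilde g=fg$ is a smooth metric on $M\setminus Z$ and $\Phi$ is there a smooth $2$-harmonic map, $\varphi$ solves $\Delta_{\tilde g}\varphi=\mu\varphi$ with smooth coefficients. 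The genuine issue is the behaviour across the degeneracy set $Z$, and I expect this to be \emph{the main obstacle}. Here I would first use the global Sobolev inequality to run Moser iteration globally and obtain $\varphi\in L^\infty(M)$ and a uniform bound, and then exploit the identity $f=|\nabla\Phi|_g^2/\mu_0$ (with $\mu_0=\lambda_k(fg)$) to see that near $Z$ the weight degenerates like the energy density of a $\mathcal{C}^{1,\alpha}$ map: this puts the equation in the class of degenerate elliptic equations with a $\mathcal{C}^{0,\alpha}$, $A_2$-type (or at least two-sided-bounded-after-renormalization on good annuli) weight, for which the Fabes–Kenig–Serapioni theory gives Hölder continuity of solutions, and then $\mathcal{C}^{1,\alpha}$ regularity up to $Z$ follows by the same bootstrap used for $\Phi$ itself, since $\varphi$ and the coordinates of $\Phi$ solve formally the same degenerate system. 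I would therefore borrow verbatim the regularity scheme already invoked for $\Phi$: that is the cleanest route, and it is consistent with the paper's assertion that $\mathcal{C}^{1,\alpha}$ — and not more — is optimal.
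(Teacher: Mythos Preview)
Your compactness argument has a genuine gap. Splitting a bounded sequence in $W^{1,2}(f.g)$ into a piece on $M\setminus U_\eta$ (handled by classical Rellich) and a piece on $U_\eta\supset Z$ only works if you can show that $\sup_k \int_{U_\eta} u_k^2 f^{n/2}\,dv_g \to 0$ as $\eta\to 0$. Smallness of $\int_{U_\eta} f^{n/2}$ is not enough for this: you need \emph{equi-integrability} of $u_k^2 f^{n/2}$, which in turn requires an improved integrability $u_k\in L^{2\kappa_0}(f.g)$ for some $\kappa_0>1$, uniformly in $k$. Your proposed Sobolev inequality does not supply this. First, the inequality as written, with $\bigl(\int \varphi^2 f^{n/2}\bigr)^{2/n}$ on the left, is vacuous once the $L^2(f.g)$ norm is large. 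Second, and more importantly, the Li--Yau/Korevaar input is an \emph{upper} bound on $\lambda_k V^{2/n}$, i.e.\ it bounds Rayleigh quotients from above for suitable test functions; a Sobolev/Poincar\'e inequality for the weighted form would be a \emph{lower} bound on the first Dirichlet eigenvalue. The two run in opposite directions, so the finiteness of $\Lambda_k(M,[g])$ cannot produce the inequality you need.

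The paper obtains the missing improved integrability by using the specific structure of $f$ in a way your proposal does not touch. It shows (Proposition \ref{proptaunapproximatedharmmap}) that the $\mathcal{C}^1$ $n$-harmonic map $\Phi$ is locally the limit of minimizing $(\tau,n)$-harmonic maps $\Psi_\tau$, for which a Bochner-type differential inequality holds (Claim \ref{clapproxdeltanharm}):
\[
-\,\mathrm{div}_g\bigl(a^\tau\,\nabla \mu^n\bigr) + |\nabla^2\Psi_\tau|^2\,\mu^{n-2} + \tfrac{n-2}{4}\,\mu^{n-4}|\nabla B|^2 \;\le\; \mu^{n-2} B\,(\kappa_g+B),
\]
with $B=|\nabla\Psi_\tau|^2$, $\mu=(B+\tau)^{1/2}$ and uniformly elliptic $a^\tau$. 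Integrating this against $u^2\mu^{-\nu_0}$ and sending $\tau\to 0$ yields a Caccioppoli estimate for $\int u^2\,|\nabla f^{\,n/(4\kappa_0)}|^2$ in terms of $\int |\nabla u|^2 f^{\,n/(2\kappa_0)}$, and this is exactly the term one must control to pass from the Euclidean Sobolev inequality for $u\,f^{\,n/(4\kappa_0)}$ to the weighted embedding $W^{1,2}(f.g)\hookrightarrow L^{2\kappa_0}(f.g)$ with $\kappa_0>1$. With $L^{2\kappa_0}$ boundedness in hand, your decomposition argument (or equivalently a.e.\ convergence on $M\setminus Z$ plus Vitali) then goes through. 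Your regularity sketch has a related issue: invoking Fabes--Kenig--Serapioni presupposes that $f^{(n-2)/2}$ is an $A_2$ weight, which does not follow from $f\in\mathcal{C}^{0,\alpha}$ near $Z$, and the assertion that eigenfunctions and the coordinates of $\Phi$ ``solve formally the same degenerate system'' conflates a linear divergence-form equation with fixed weight $f^{(n-2)/2}$ and the nonlinear $n$-harmonic map system; the minimizing-map regularity theory does not transfer verbatim.
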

This compactness property on the weight $f$ generalizes to higher dimensions the concept of "admissible measures" developped in dimension 2 e.g in \cite{KS}. Thanks to this result, we \textit{a posteriori} deduce that the multiplicity of the $k$-th eigenvalue associated to $f.g$ is finite and that the target sphere $\mathbb{S}^p$ in Theorem \ref{theomain} is a finite dimensional sphere. The proof of Theorem \ref{theodiscretespectrum} is based on a new understanding of $\mathcal{C}^{1}$ $n$-harmonic maps: they can be seen as local limits of $(\tau,n)$-harmonic maps, defined as minimizers of the regularized functional
$$ \Psi \mapsto \int \left(\left\vert \nabla \Psi \right\vert^2 + \tau \right)^{\frac{n}{2}}. $$
Then, we can generalize to systems some recent improvements of regularity of the $p$-Laplace equation (see \cite{Sarsa}) by computing Caccioppoli (or reverse H\"older) type inequalities independant of $\tau$ on the equation satisfied by the gradient of $(\tau,n)$-harmonic maps. We can deduce that the $W^{1,2}$ norm of some power of $f$ is controled and we deduce local embeddings $W^{1,2}\left( f.g\right) \to L^{2\kappa_0}\left( f.g\right)$ for some $\kappa_0 > 1$ that imply compact embeddings in $L^p(f.g)$ for $p < 2 \kappa_0$. From this new technique, we also deduce a higher regularity result for $n$-harmonic maps into spheres. 

\medskip

Coming back to our original problem, we know that the strict inequality involved in the assumptions of Theorem  \ref{theomain} always holds true for $k= 1$ if $(M,[g])$ is not equivalent to a sphere (see \cite{petrides-0}). Since by \cite{EI2}, the theorem holds on the conformal class of the round sphere, we then have the existence result for the first eigenvalue:
\begin{theo} \label{theofirsteigenvalue}
Let $(M, [g])$ be a compact connected manifold of dimension $n\geq 3$ endowed with a conformal class, then for some $\alpha>0$, there is a non-negative factor $f \in \mathcal{C}^{0,\alpha}\left(M\right) \cap \mathcal{C}^{\infty}(M\setminus Z)$ where $Z = \{ z \in M ; f(z) = 0 \}$ such that 
$$ \lambda_1(f g) \left(\int_M f^{\frac{n}{2}}dv_g\right)^{\frac{2}{n}} = \Lambda_1(M,[g]) $$
Moreover, $f = \frac{\left\vert \nabla \Phi \right\vert_g^2}{\lambda_1(f g)}$, where $\Phi : M \to \mathbb{S}^p$ is some $n$-harmonic map into a sphere, whose coordinate functions are eigenfunctions with respect to $\lambda_1(f g)$.
\end{theo}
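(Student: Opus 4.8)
The plan is to derive Theorem \ref{theofirsteigenvalue} as an immediate consequence of Theorem \ref{theomain}, by checking that the strict inequality hypothesis is automatically satisfied when $k=1$. The only input needed is that for $k=1$ the disjoint-union competitors can never beat $(M,[g])$ unless $(M,[g])$ is itself conformally the round sphere, in which case the conclusion is known directly.

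\begin{pr}
We apply Theorem \ref{theomain} with $k=1$. There are two cases. If $(M,[g])$ is conformally equivalent to $(\mathbb{S}^n,[h])$, then the conclusion is exactly the El Soufi--Ilias result \cite{EI2} recalled above: $\Lambda_1(\mathbb{S}^n,[h]) = \bar\lambda_1(\mathbb{S}^n,h) = n\omega_n^{2/n}$ is achieved by the round metric, which is smooth, and the first eigenfunctions of the round metric are the restrictions of the linear coordinate functions $x_1,\dots,x_{n+1}$, which furnish the standard (totally geodesic, hence $n$-harmonic) embedding $\Phi = \mathrm{id}: \mathbb{S}^n \to \mathbb{S}^n$; one checks directly that $|\nabla \Phi|_h^2 = n = \lambda_1(h)$, so $f \equiv 1$ works and $Z = \emptyset$.

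If $(M,[g])$ is not conformally equivalent to $(\mathbb{S}^n,[h])$, we must verify that for every competitor $(\widetilde M,[\widetilde g])$ of one of the two types listed in Theorem \ref{theomain} one has $\Lambda_1(M,[g]) > \Lambda_1(\widetilde M,[\widetilde g])$. This is precisely the content of \cite{petrides-0}, which establishes, for the first eigenvalue, the strict monotonicity of $\Lambda_1$ under disjoint union together with the gap with respect to adding sphere components: more precisely, for any compact $(N,[g_N])$ of dimension $n$,
$$ \Lambda_1\big( N \sqcup (\mathbb{S}^n,[h]) , \cdot \big) = \max\big\{ \Lambda_1(N,[g_N]) , \Lambda_1(\mathbb{S}^n,[h]) \big\} $$
(and the analogous formula for several sphere components), so that a competitor $(\widetilde M,[\widetilde g])$ of the first type has $\Lambda_1(\widetilde M,[\widetilde g]) = \max\{\Lambda_1(M,[g]),\, n\omega_n^{2/n}\}$ and of the second type has $\Lambda_1(\widetilde M,[\widetilde g]) = n\omega_n^{2/n}$. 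Since \cite{petrides-0} also gives the strict inequality $\Lambda_1(M,[g]) > n\omega_n^{2/n} = \Lambda_1(\mathbb{S}^n,[h])$ whenever $(M,[g])$ is not conformal to the round sphere (the round sphere being the unique conformal class minimizing $\Lambda_1$ among closed $n$-manifolds), both types of competitor satisfy the required strict inequality, and Theorem \ref{theomain} applies verbatim to produce the factor $f$ and the $n$-harmonic map $\Phi$.

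The only genuinely delicate point is the second case, and it is entirely outsourced to \cite{petrides-0}: one needs both the disjoint-union splitting formula for $\Lambda_1$ and the strict inequality $\Lambda_1(M,[g]) > \Lambda_1(\mathbb{S}^n,[h])$ for non-spherical conformal classes. Granting these, there is nothing left to prove; the statement is a direct specialization of Theorem \ref{theomain}.
\end{pr}
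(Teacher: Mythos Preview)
Your overall strategy matches the paper's exactly: cite \cite{EI2} for the sphere case, cite \cite{petrides-0} for the strict inequality in the non-sphere case, and then invoke Theorem~\ref{theomain}.

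There is, however, a self-contradiction in your treatment of the first-type competitors. With your formula
\[
\Lambda_1\big((M,[g]) \sqcup (\mathbb{S}^n,[h])\big) = \max\{\Lambda_1(M,[g]),\, n\omega_n^{2/n}\},
\]
combined with $\Lambda_1(M,[g]) > n\omega_n^{2/n}$, the maximum equals $\Lambda_1(M,[g])$ itself; so the required strict inequality $\Lambda_1(M,[g]) > \Lambda_1(\widetilde M,[\widetilde g])$ would \emph{fail}, not hold, for this competitor. The clean argument is simpler than the disjoint-union splitting formula you invoke: on any manifold with $c \geq 2$ connected components the kernel of the Laplacian has dimension $c$, so $\lambda_1 \equiv 0$ for every smooth metric and hence $\Lambda_1(\widetilde M,[\widetilde g]) = 0$ for every first-type competitor and for every second-type competitor with at least two spheres. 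The only nontrivial competitor for $k=1$ is thus a single copy of $(\mathbb{S}^n,[h])$, and there the strict inequality $\Lambda_1(M,[g]) > \Lambda_1(\mathbb{S}^n,[h]) = n\omega_n^{2/n}$ is exactly \cite{petrides-0}. (Equivalently, Claim~\ref{clbubbletree} forces $\mathbf{1}_{\tilde\mu_0 \neq 0} + t \leq k = 1$, so no first-type configuration can arise as a bubble-tree limit when $k=1$.)
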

This is the generalization for $n\geq 3$ of the main theorem in \cite{petrides}. 

In fact, for $k > 1$ we always have existence of maximal configurations but they may be a disjoint union of at most $k$ connected surfaces $(\widetilde{M},[\tilde{g}]) = (M,[g]) \sqcup  \left(\mathbb{S}^n,[h]\right)\sqcup \cdots \sqcup \left(\mathbb{S}^n,[h]\right) $ or $ (\widetilde{M},[\widetilde{g}]) =  \left(\mathbb{S}^n,[h]\right)\sqcup \cdots \sqcup \left(\mathbb{S}^n,[h]\right) $ endowed with metrics maximizing lower eigenvalues in their conformal class. This is a consequence of the bubble tree convergence proved in the current paper. For instance bubbling happens for $\Lambda_2(\mathbb{S}^n,[h])$, which is never realized on $\mathbb{S}^n$ \cite{kim}, but realized by a disjoint union of two round spheres of same volume. We add the strict inequality assumption in Theorem \ref{theomain} to be sure to obtain a new maximizer on $M$. Moreover, as a contrapositive, if there is not any maximizer  for $\Lambda_k\left(M,[g]\right)$, we deduce from Theorem \ref{theomain} and a result by Colbois, El Soufi \cite{ces} that the inequality of the assumption of Theorem \ref{theomain} is an equality. For instance, proving that $\Lambda_k\left(\mathbb{S}^n,[h]\right)$ is not realized in the conformal class of a round sphere gives a natural way to prove the following conjecture
$$ \Lambda_k\left(\mathbb{S}^n,[h]\right) = n k^{\frac{2}{n}} \omega_n^{\frac{2}{n}} $$
that is known to be true for $k=1$ (\cite{hersch} and \cite{EI2}), $k=2$ (\cite{nadirashvili-2} \cite{petrides-1} and \cite{kim}) and $n=2$ (\cite{knpp}). This strategy was used in dimension 2 in \cite{knpp} and \cite{karpukhin-3}.

As we will explain in section \ref{splitconformalfactor}, we prove theorem \eqref{theomain} by noticing that the maximization of $\Lambda_k(M,g)$ is in fact the same as the maximization of a more general functional
$$ \bar{\lambda}_k(g,\alpha,\beta) = \lambda_k(g, \alpha, \beta) \frac{ \int_M \beta dv_g }{ \left(\int_M \alpha^{\frac{n}{n-2}}dv_g \right)^{\frac{n-2}{n}}} . $$ 
among non negative functions $\alpha$, $\beta$, where
$$ \lambda_k(g,\alpha,\beta) = \inf_{E \subset \mathcal{G}{k+1}\left(\mathcal{C}^\infty\left(M\right)\right)} \sup_{\phi\in E\setminus\{0\}} \frac{\int_M \left\vert \nabla \phi \right\vert_g^2 \alpha dv_g}{\int_M\phi^2 \beta dv_g}  . $$
Indeed, we have:

\begin{theo} \label{theosplit}
Let $(M, [g])$ be a compact connected manifold of dimension $n\geq 3$ endowed with a conformal class and $k\geq 1$. Then
$$  \Lambda_k(M,[g]) = \sup_{\alpha \geq 0 , \beta \geq 0} \bar{\lambda}_k(g,\alpha,\beta) $$
More precisely, the maximizers $(\widetilde{M},[\tilde{g}]) = (M,[g]) \sqcup  \left(\mathbb{S}^n,[h]\right)\sqcup \cdots \sqcup \left(\mathbb{S}^n,[h]\right) $ or $ (\widetilde{M},[\widetilde{g}]) =  \left(\mathbb{S}^n,[h]\right)\sqcup \cdots \sqcup \left(\mathbb{S}^n,[h]\right) $  of $\bar{\lambda}_k(g, \alpha, \alpha^{\frac{n}{n-2}})$ for $\alpha$ non negative functions  are the same as the maximizers of $\bar{\lambda}_k(g, \alpha, \beta)$ for $(\alpha,\beta)$ a couple of non negative functions.
\end{theo}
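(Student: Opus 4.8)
\emph{Proof strategy.} The inequality ``$\ge$'' is immediate: every metric $\tilde g = e^{2u}g \in [g]$ produces the admissible couple $(\alpha,\beta) = (e^{(n-2)u}, e^{nu})$, for which $\beta = \alpha^{\frac{n}{n-2}}$, and the variational formula recalled above gives $\bar\lambda_k(g, e^{(n-2)u}, e^{nu}) = \lambda_k(\tilde g)\,V_{\tilde g}(M)^{\frac2n} = \bar\lambda_k(\tilde g)$; taking the supremum over $u$ yields $\Lambda_k(M,[g]) \le \sup_{\alpha,\beta\ge0}\bar\lambda_k(g,\alpha,\beta)$. The same computation identifies $\sup_{\alpha\ge0}\bar\lambda_k(g,\alpha,\alpha^{\frac{n}{n-2}})$, the supremum over nonnegative factors, with $\sup_{\tilde g\in[g]}\bar\lambda_k(\tilde g) = \Lambda_k(M,[g])$: passing from smooth positive $\alpha$ to merely nonnegative ones does not raise the supremum, by a standard mollification argument using the continuity of $\bar\lambda_k$ along families $\alpha+\delta$ with $\delta\downarrow0$ (the analogue of the ``admissible weights'' theory in dimension $2$). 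So the theorem reduces to showing that decoupling $\beta$ from $\alpha^{\frac{n}{n-2}}$ increases neither the supremum nor the set of maximizers.

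First I would show that a critical couple of $\bar\lambda_k(g,\cdot,\cdot)$ is diagonal up to a positive multiplicative constant. Given a maximizing (or merely critical) couple $(\alpha,\beta)$ with $\lambda := \lambda_k(g,\alpha,\beta)$ of multiplicity $m$ and $L^2(\beta\,dv_g)$-orthonormal eigenfunctions $\phi_1,\dots,\phi_m$, I would perturb $(\alpha,\beta)$ jointly and extremize the first variation of $\bar\lambda_k$; since the first-order perturbation of $\lambda_k$ at an eigenvalue of multiplicity $m$ is governed on the eigenspace by a single symmetric positive-semidefinite matrix $C=(c_{ij})$ with $\mathrm{tr}\,C = 1$, the same $C$ is forced to satisfy, with $\Phi := C^{1/2}(\phi_1,\dots,\phi_m)^{\top}$, both $|\Phi|^2 \equiv \mathrm{const}$ on $\{\beta>0\}$ and $|\nabla\Phi|_g^2 = c\,\alpha^{\frac{2}{n-2}}$ on $\{\alpha>0\}$ for a constant $c>0$. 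As $\Phi$ solves $-\mathrm{div}_g(\alpha\nabla\Phi) = \lambda\beta\Phi$, taking the scalar product with $\Phi$ and using $\mathrm{div}_g(\alpha\nabla|\Phi|^2) = 0$ on $\{\alpha\beta>0\}$ gives $\alpha|\nabla\Phi|_g^2 = \lambda\beta$, and combining with $|\nabla\Phi|_g^2 = c\,\alpha^{\frac{2}{n-2}}$ yields $\beta = \tfrac{c}{\lambda}\,\alpha^{\frac{n}{n-2}}$. Because $\bar\lambda_k(g,\alpha,\,c'\alpha^{\frac n{n-2}}) = \bar\lambda_k(g,\alpha,\alpha^{\frac n{n-2}})$ for every $c'>0$, the couple $(\alpha,\alpha^{\frac{n}{n-2}})$ is then a diagonal maximizer, i.e.\ $\alpha^{\frac{2}{n-2}}g$ is a generalized maximal metric; this matches the maximizers of the two functionals whenever a maximizer exists on $M$, and shows $\sup_{\alpha,\beta\ge0}\bar\lambda_k(g,\alpha,\beta) \le \Lambda_k(M,[g])$ in that case.

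When no maximizer exists on $M$, I would instead run the variational scheme of \cite{petrides-7} directly on the decoupled functional $\bar\lambda_k(g,\cdot,\cdot)$, normalized by $\int_M\alpha^{\frac{n}{n-2}}dv_g = 1$, producing a maximizing sequence $(\alpha_\eps,\beta_\eps)$ of ``almost critical'' couples, and then invoke the bubble-tree convergence established elsewhere in the paper. Concentration of $\beta_\eps\,dv_g$ (equivalently of $\alpha_\eps^{\frac n{n-2}}dv_g$) can occur only at finitely many points; rescaling at each of them and using the conformal invariance of the $n$-harmonic system, every bubble is the stereographic model of a generalized maximal configuration on $(\mathbb{S}^n,[h])$, the body converging to a generalized maximal configuration on $(M,[g])$, each component carrying a diagonal weight by the Euler--Lagrange analysis above. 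Energy and mass quantization distribute the index $k$ among the body and at most $k$ spherical bubbles, so $\sup_{\alpha,\beta\ge0}\bar\lambda_k(g,\alpha,\beta)$ equals $\bar\lambda_k$ of one of the listed disjoint unions $(\widetilde M,[\widetilde g])$; since $\bar\lambda_k$ of such a union, after optimizing the volume distribution among the components, equals $\Lambda_k(\widetilde M,[\widetilde g])$ and, by Colbois--El Soufi \cite{ces}, $\Lambda_k(\widetilde M,[\widetilde g]) \le \Lambda_k(M,[g])$, the reverse inequality follows. The same bubble-tree limits also describe the maximizers of $\bar\lambda_k(g,\alpha,\alpha^{\frac n{n-2}})$ over nonnegative $\alpha$, which gives the ``more precisely'' assertion.

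The main obstacle is the bubble-tree convergence for the decoupled functional. Unlike dimension $2$, two independent conformal-type weights degenerate at once --- one inside the divergence term and one as the density --- and there is no a priori bound on the multiplicity of $\lambda_k$, hence none on the number $p_\eps$ of components of the almost-harmonic maps $\Phi_\eps$; one must still show that concentration occurs at only finitely many scales, that every rescaled limit is a genuine (generalized) $n$-harmonic sphere map with quantized energy, and that no energy is lost in the necks. A secondary technical point is setting up the joint Euler--Lagrange system --- in particular extracting the single matrix $C$ --- when $\lambda_k$ has high multiplicity and the maximal factor vanishes on a nonempty set.
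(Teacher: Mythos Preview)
Your proposal is essentially the paper's own approach: the trivial inequality, the subdifferential/Euler--Lagrange observation that critical couples satisfy $\beta = c\,\alpha^{n/(n-2)}$ (this is exactly Section~\ref{splitconformalfactor}), the Palais--Smale sequence for the decoupled functional (Section~\ref{sec1}), the bubble-tree convergence (Proposition~\ref{palaissmale}), and the Colbois--El~Soufi inequality to close. The one organizational difference is that the paper does \emph{not} split into the two cases ``maximizer exists / does not exist''; it always runs the Palais--Smale scheme, and the diagonal structure $\beta = \alpha^{n/(n-2)}$ emerges automatically from the limiting $n$-harmonic map, since the convergence analysis shows $\tilde\mu_0 = \lambda^{-n/2}|\nabla\Phi_0|^n\,dv_g$. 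This unified route avoids precisely the regularity issue you flag at the end: your case-1 argument needs the subdifferential computation \eqref{subdifferential} at a possibly degenerate maximizer, which in turn requires the compact embedding $W^{1,2}(\alpha,\beta)\hookrightarrow L^2(\beta)$ (Remark~\ref{remcompactsubdifferential}); the paper only establishes that compactness \emph{a posteriori} (Theorem~\ref{theodiscretespectrum}) for the specific weights coming out of the bubble-tree limit, so the direct Euler--Lagrange route for an abstract maximizer is not available without first going through the convergence analysis anyway.
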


Very recently, Karpukhin and Stern \cite{KS2} proposed another variational problem 
$$ \nu_k(M,g) = \sup_{\beta\geq 0} \left(  \lambda_k(g,1,\beta)  \int_M \beta dv_g  \right) $$
which was more convenient to generalize their techniques in dimension 2 \cite{KS} to dimension $n\geq 3$ \cite{KS2}. One reason is that the eigenvalue equations and harmonic map equations associated to critical potentials $\beta$ never become degenerate elliptic system of equations. Notice that the techniques we use in the current paper are adaptable to prove existence and regularity results on $\nu_k(M,g)$ for any $k$. As an immediate consequence of Theorem \ref{theosplit}, we have that
$$ \nu_k(M,g) \leq \Lambda_k(M,[g])V_g(M)^{\frac{n-2}{n}} $$
with equality if and only if $g$ is a maximizer of $\Lambda_k(M,[g])$. For instance, by Theorem \eqref{theofirsteigenvalue}, there is a maximizer for $k=1$. This inequality was only proved for specific conformal classes in \cite{KS2}.

\medskip

Notice that the techniques and results given in the current paper are generalizable to many other eigenvalue problems: combinations of Laplace eigenvalues, or Steklov eigenvalues... This will be written in forecoming papers.

\medskip

The paper is organized as follows: in section \ref{sec1} we first explain the variational approach to prove Theorem \ref{theosplit}. In particular, we define and select a Palais-Smale sequence for this variational problem. In section \ref{sec2}, we prove the bubble tree convergence of Palais-Smale sequences, leading to Theorem \ref{theosplit} and Theorem \ref{theomain}. In particular, we prove at the end of section \ref{sec2} the compactness embedding of Theorem \ref{theodiscretespectrum}. In Section \ref{sec3}, we prove all the necessary regularity results on $n$-harmonic maps into (possibly infinite dimensional) spheres, generalizing the classical $\eps$-regularity results (e.g in \cite{Uhlenbeck}\cite{Strzelecki}\cite{HardtLin}\cite{mouyang}), strong convergence results for sequences of harmonic maps \cite{Cou}, point removability \cite{Uhlenbeck}\cite{HardtLin}\cite{mouyang}...  In particular, inspired by \cite{Uhlenbeck}, we prove \textit{a priori} regularity estimates which are independant of the dimension of the target manifold. By the way, we prove a new result concerning harmonic maps: any $\mathcal{C}^1$ $n$-harmonic map is a locally minimizing harmonic map and is locally a strong limit of minimizing $(\tau,n)$-harmonic maps. This leads to the proof of higher regularity results for $\mathcal{C}^1$ harmonic maps we use for the proof of Theorem \ref{theodiscretespectrum}.

\section{The variational approach} \label{sec1}

\subsection{Splitting the conformal factor into two densities} \label{splitconformalfactor}

Let $(M,g)$ be a Riemannian manifold of dimension $n$. We consider $\alpha$ and $\beta$ two non-negative functions, as weights for the following eigenvalue problem:
$$ \lambda_k(\alpha,\beta) = \inf_{E \subset \mathcal{G}{k+1}\left(\mathcal{C}^\infty\left(M\right)\right)} \sup_{\phi\in E\setminus\{0\}} \frac{\int_M \left\vert \nabla \phi \right\vert_g^2 \alpha dv_g}{\int_M\phi^2 \beta dv_g} $$
Notice that if $\beta = \alpha^{\frac{n}{n-2}}$ is a positive function, $\lambda_k(\alpha,\beta)$ is nothing but the $k$-th Laplace eigenvalue associated to the metric $\beta^{\frac{2}{n}}g$. Thanks to this remark, a natural functional invariant by dilatation to consider is
$$ \bar{\lambda}_k(\alpha,\beta) = \lambda_k(f, h) \frac{ \int_M \beta dv_g }{ \left(\int_M \alpha^{\frac{n}{n-2}}dv_g \right)^{\frac{n-2}{n}}} . $$ 
If the functions $\alpha$ and $\beta$ are smooth positive, by compact embeddings of the natural weighted $L^2$ and $W^{1,2}$ spaces involved in this problem there is existence of eigenfunctions. Let $(\alpha,\beta)$ be such that $\int_M \beta dv_g = 1$ and $\int_M \alpha^{\frac{n}{2}} dv_g = 1$. The subdifferential of $-\bar\lambda_k$ at $(\alpha,\beta)$ (see \cite{pt} for definition and computations of the subdifferential) satisfies
\begin{equation} \label{subdifferential} - \partial\left( - \bar{\lambda}_k\right)(\alpha,\beta) \subset co \left\{ 
 \left( \left\vert \nabla \phi \right\vert_g^2 - \bar\lambda_k \alpha^{\frac{2}{n-2}},  \bar\lambda_k \left( 1 - \phi^2 \right) \right) ; \phi \in E_k(\alpha,\beta), \int_M \phi^2 \beta dv_g = 1
  \right\}
\end{equation}
where $\bar\lambda_k = \bar\lambda_k(\alpha,\beta)$. Notice that as a convention, we compute the subdifferential of $-\bar{\lambda}_k$ because the subdifferential is well adapted for minimization of functionals. If $(\alpha,\beta)$ is critical, we have by definition that $0 \in \partial \bar{\lambda}_k(\alpha,\beta)$. Then, there are eigenfunctions $\Phi = (\phi_1,\cdots, \phi_p)$ such that
$$ \left\vert \nabla \phi \right\vert_g^2 - \bar\lambda_k(\alpha,\beta) \alpha^{\frac{2}{n-2}} = 0 \text{ and } \left\vert \Phi \right\vert^2 = 1$$
Using the system of equations on $\Phi$ (eigenvalue equations)
$$ -div_g(\alpha \nabla \Phi) = \bar{\lambda}_k(\alpha,\beta) \beta \Phi  $$
we obtain by computing $0 =  -div_g( \alpha \nabla \left\vert \Phi \right\vert^2 ) $ that
$$ \alpha \left\vert \nabla \Phi \right\vert^2 = \bar{\lambda}_k(\alpha,\beta) \beta $$
so that $ \beta = \alpha^{\frac{n}{n-2}}$. Then, if a maximizer of $\bar\lambda_k(\alpha,\beta)$ exists, it is a maximizer for the Laplace eigenvalue $\bar\lambda_k$ in a conformal class

\begin{rem} \label{remcompactsubdifferential} Notice that the computation of the subdifferential is valid as soon as we assume that the embedding of the weighted Sobolev spaces involved in the eigenvalue functional $W^{1,2}(\alpha,\beta)  \to L^2(\beta)$ is compact (see \cite{pt}), leading to the existence of eigenfunctions. The maximizers we obtain at the very end of the proof of Theorem \ref{theomain} are not necessarily smooth and positive everywhere but satisfy this compactness embedding.
\end{rem}

\subsection{A deformation Lemma}
We denote by
$$ X = \{(\alpha,\beta) \in \mathcal{C}^0(M)\times \mathcal{C}^0(M), \alpha>0, \beta>0 \} $$ 
Let $(\alpha,\beta)\in X$ with $\int_M \beta dv_g = 1$ and $\int_M \alpha^{\frac{n}{2}} dv_g = 1$. We have that the formula \eqref{subdifferential} holds true on the subdifferential $\partial\bar{\lambda}_k(\alpha,\beta) $
and we set
$$ \left\vert \partial \bar{\lambda}_k(\alpha,\beta) \right\vert =  \max_{\tau\in \bar{X}} \min_{\psi \in \partial \bar\lambda_k(\alpha,\beta)} \left\langle \tau , \psi \right\rangle $$
a pseudo-norm of the subdifferential $ \partial \bar\lambda_k(\alpha,\beta) $, where 
$$\tilde{X} = \{ (\dot{\alpha},\dot{\beta}) \in \mathcal{C}^0\left(M\right) ; \alpha\geq 0, \beta\geq 0 \text{ and } \int_M \dot{\alpha} dv_g + \int_M \dot{\beta} dv_g = 1  \} $$

Notice that $\left\vert \partial \bar{\lambda}_k(\alpha,\beta) \right\vert  \geq 0$ has to be seen as the largest right derivative of $\bar\lambda_k$ among all the variations $v_t = t (\dot{\alpha},\dot{\beta})$ for $\dot{\alpha},\dot{\beta} \in \tilde{X}$.
We let
$$ X_{r} = \{ (\alpha,\beta)\in X ;  \left\vert \partial \bar\lambda_k(\alpha,\beta) \right\vert > 0 \} $$
be the set of the regular points of $f$. The set of critical points is defined as
$$ X_{c} = \{ (\alpha,\beta)\in X ;  \left\vert \partial \bar\lambda_k(\alpha,\beta) \right\vert = 0 \}. $$
Notice that we are looking for critical points in the set $ X_c $ (in the current paper: maximizers)
 
\begin{prop} \label{deformationlemma}
If there is $\eps_0 >0$ and $\delta>0$ such that
$$ \forall (\alpha,\beta) \in X ; \forall \eps\in (0,\eps_0), \left\vert \bar\lambda_k(\alpha,\beta) - c \right\vert \leq \eps \Rightarrow  \left\vert \partial \bar\lambda_k(\alpha,\beta) \right\vert \geq \delta \hskip.1cm, $$
Then $\forall \eps \in (0,\eps_0)$, there is a continuous map $\eta : X \to X$ such that
\begin{itemize}
\item $\eta(\alpha,\beta) = (\alpha,\beta)$ for any $f\in \{ \bar\lambda_k\geq c + \eps_0 \} \cup \{\bar\lambda_k \leq c-\eps_0\}$
\item $\eta(\{ \bar{\lambda}_k \geq c - \eps\}  ) \subset \{ \bar\lambda_k \geq c + \eps\} $
\end{itemize}
\end{prop}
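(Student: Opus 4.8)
The plan is to follow the classical deformation lemma scheme from critical point theory, adapted to the non-smooth setting where $\bar\lambda_k$ is only locally Lipschitz and we work with its subdifferential rather than a gradient. The deformation $\eta$ will be the time-$1$ flow of a suitably constructed pseudo-gradient vector field on $X$, and the key is to extract from the quantitative lower bound $\lvert \partial\bar\lambda_k(\alpha,\beta)\rvert \geq \delta$ on the strip $\{\lvert\bar\lambda_k - c\rvert \leq \eps_0\}$ a locally Lipschitz direction field along which $\bar\lambda_k$ increases at a definite rate.

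First I would unwind the definition of $\lvert \partial\bar\lambda_k(\alpha,\beta)\rvert$: it is a max-min, so for every point $(\alpha,\beta)$ with $\lvert\bar\lambda_k-c\rvert<\eps_0$ there exists a direction $(\dot\alpha,\dot\beta)\in\tilde X$ realizing $\min_{\psi\in\partial\bar\lambda_k}\langle(\dot\alpha,\dot\beta),\psi\rangle \geq \delta$. Since $\partial\bar\lambda_k$ is upper semicontinuous (it is contained in the convex hull in \eqref{subdifferential}, and $E_k(\alpha,\beta)$ varies upper semicontinuously by stability of eigenspaces), this direction can be chosen \emph{locally} constant up to a small error: on a neighborhood of each point there is a fixed $(\dot\alpha,\dot\beta)\in\tilde X$ with $\langle(\dot\alpha,\dot\beta),\psi\rangle\geq \delta/2$ for all $\psi$ in the subdifferential at nearby points. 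A partition of unity argument on the strip then glues these into a globally defined, locally Lipschitz vector field $V:X\to \mathcal{C}^0(M)\times\mathcal{C}^0(M)$ with $V(\alpha,\beta)$ a convex combination of elements of $\tilde X$ (hence with integral $1$ against $dv_g$) and $\langle V(\alpha,\beta),\psi\rangle\geq\delta/2$ for all $\psi\in\partial\bar\lambda_k(\alpha,\beta)$ whenever $\lvert\bar\lambda_k-c\rvert\leq\eps_0$. Multiplying by a cutoff $\chi$ that is $1$ on $\{\lvert\bar\lambda_k-c\rvert\leq\eps\}$ and $0$ outside $\{\lvert\bar\lambda_k-c\rvert<\eps_0\}$ produces the pseudo-gradient field $W=\chi V$, which vanishes on $\{\bar\lambda_k\geq c+\eps_0\}\cup\{\bar\lambda_k\leq c-\eps_0\}$.

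Next I would solve the ODE $\frac{d}{dt}(\alpha_t,\beta_t) = W(\alpha_t,\beta_t)$ with $(\alpha_0,\beta_0)=(\alpha,\beta)$; since $W$ is locally Lipschitz, bounded, and tangent to $X$ (the direction field keeps $\alpha,\beta$ positive because we add non-negative perturbations, and one rescales to preserve the normalizations $\int\beta\,dv_g=\int\alpha^{n/2}\,dv_g=1$ — here I would use that $\bar\lambda_k$ is scale-invariant so the rescaling does not affect the value), the flow exists for all time and is continuous in the initial data. Setting $\eta = $ flow at time $t_0$ with $t_0 = 4\eps/\delta$ (or similar), the first bullet is immediate because $W\equiv 0$ there. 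For the second bullet, along a trajectory starting in $\{\bar\lambda_k\geq c-\eps\}$, as long as the trajectory remains in the strip $\{\lvert\bar\lambda_k-c\rvert\leq\eps_0\}$ one has $\frac{d}{dt}\bar\lambda_k(\alpha_t,\beta_t)\geq \min_{\psi\in\partial\bar\lambda_k}\langle W,\psi\rangle \geq \delta/2$ by the chain rule for Lipschitz functions (the lower derivative of $\bar\lambda_k$ along $W$ is bounded below by the min of $\psi$ against $W$), so $\bar\lambda_k$ increases at rate at least $\delta/2$; either the trajectory reaches $\{\bar\lambda_k\geq c+\eps\}$ before time $t_0$ — and stays there since $\bar\lambda_k$ is non-decreasing along the flow — or it runs the full time $t_0$ and gains at least $t_0\delta/2 = 2\eps > 2\eps$, forcing $\bar\lambda_k(\eta(\alpha,\beta))\geq c-\eps+2\eps = c+\eps$. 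Either way $\eta(\{\bar\lambda_k\geq c-\eps\})\subset\{\bar\lambda_k\geq c+\eps\}$.

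The main obstacle I anticipate is the construction of the locally Lipschitz pseudo-gradient field in the correct cone, i.e. making the ``local constancy'' of the increasing direction precise: one must verify that the set-valued map $(\alpha,\beta)\mapsto\partial\bar\lambda_k(\alpha,\beta)$ is genuinely upper semicontinuous with compact convex values in the relevant topology (which relies on continuity of eigenvalue branches and stability of eigenspaces for the weighted problem under $\mathcal{C}^0$ perturbations of $(\alpha,\beta)$, valid here because the weights are positive and continuous so the embedding $W^{1,2}(\alpha,\beta)\to L^2(\beta)$ is compact, per Remark \ref{remcompactsubdifferential}), and then that the admissible directions $\tilde X$ form a convex set stable under the partition-of-unity gluing while keeping $\alpha,\beta>0$ along the flow. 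A secondary technical point is the rescaling to stay on the normalization constraint surface; this is handled by projecting $W$ onto the tangent of the constraint or, more simply, by flowing freely and renormalizing at the end using scale-invariance of $\bar\lambda_k$. All of these are standard once the semicontinuity of the subdifferential, already established in \cite{pt}, is invoked.
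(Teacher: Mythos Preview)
Your proposal is correct and follows essentially the same scheme as the paper: build a locally Lipschitz pseudo-gradient field in the cone $\tilde X$ by choosing near-optimal directions pointwise and gluing via a partition of unity (this is exactly the paper's Claim~\ref{pseudovf}), localize it with a cutoff supported in the strip, and set $\eta$ to be the time-$t_0$ flow with $t_0$ of order $\eps/\delta$. The only cosmetic difference is that the paper uses the distance-based cutoff $\frac{d(x,A)}{d(x,A)+d(x,B)}$ in $X$ rather than a level-based cutoff $\chi(\bar\lambda_k)$, which sidesteps any question about Lipschitz dependence of $\bar\lambda_k$ on $(\alpha,\beta)$; otherwise your argument and the paper's coincide.
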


During the proof, we use the notations $E = \bar\lambda_k$ and $x =(\alpha,\beta) \in X $, $\tau = (\dot{\alpha},\dot{\beta})\in \tilde{X}$. We first build an adapted pseudo-vector field for our problem. 

\begin{cl} \label{pseudovf} Let $\eps>0$.  There is a locally Lipschitz vector field $v : X_r \to \bar{X}$ such that for all $x \in X$
\begin{itemize}
\item $ \left\| v(x)\right\|_1 < 2  $
\item $\forall \psi \in \partial E(x) , \left\langle v(x),\psi \right\rangle >  \frac{ \left\vert \partial E(x) \right\vert}{2} $
\item $v(x) \geq 0$
\end{itemize}
\end{cl}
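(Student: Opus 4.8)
The plan is to construct the vector field $v$ by a standard partition-of-unity gluing of locally constant selections from the subdifferential, adapted here to keep the nonnegativity constraint $v(x)\geq 0$. First I would fix $x\in X_r$. By definition of $|\partial E(x)|$ as the max-min over $\tau\in\tilde X$, there is some $\tau_x\in\tilde X$ with $\min_{\psi\in\partial E(x)}\langle\tau_x,\psi\rangle > \tfrac{3}{4}|\partial E(x)| > 0$; since $\tilde X\subset\bar X$ and $\|\tau_x\|_1=1$ (the elements of $\tilde X$ have total mass one and nonnegative components), both $\|\tau_x\|_1<2$ and $\tau_x\geq 0$ hold automatically, which is exactly where the nonnegativity of $v$ will come from. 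The point of working with $\tilde X$ rather than all of $\bar X$ in the definition of $|\partial E|$ is precisely to force the admissible competitor, and hence $v$, to be a nonnegative density.

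The second step is to upgrade this pointwise choice to a locally Lipschitz field. The subdifferential $\partial E$, given by \eqref{subdifferential} as the convex hull of a set indexed by eigenfunctions $\phi\in E_k(\alpha,\beta)$ normalized in $L^2(\beta)$, is upper semicontinuous as a set-valued map in $x$ (this is the usual semicontinuity of eigenspaces and of $\bar\lambda_k$, which one cites from \cite{pt}); consequently $|\partial E|$ is lower semicontinuous and $x\mapsto\tau_x$ can be chosen so that the inequality $\min_{\psi\in\partial E(y)}\langle\tau_x,\psi\rangle > \tfrac{1}{2}|\partial E(y)|$ persists for all $y$ in a neighborhood $U_x$ of $x$. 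Then I would take a locally finite refinement $\{U_{x_i}\}$ of $\{U_x\}_{x\in X_r}$, a subordinate locally Lipschitz partition of unity $\{\rho_i\}$ on $X_r$, and set
\begin{equation*}
v(x) = \sum_i \rho_i(x)\,\tau_{x_i}.
\end{equation*}
This is locally Lipschitz and $X_r\to\bar X$; nonnegativity $v(x)\geq 0$ is preserved under convex combinations of the nonnegative $\tau_{x_i}$; the bound $\|v(x)\|_1\leq\sum_i\rho_i(x)\|\tau_{x_i}\|_1 = 1 < 2$ follows from $\|\tau_{x_i}\|_1=1$; and for any $\psi\in\partial E(x)$, since $x\in U_{x_i}$ whenever $\rho_i(x)>0$, linearity of $\langle\cdot,\psi\rangle$ gives $\langle v(x),\psi\rangle = \sum_i\rho_i(x)\langle\tau_{x_i},\psi\rangle > \tfrac{1}{2}|\partial E(x)|\sum_i\rho_i(x) = \tfrac{1}{2}|\partial E(x)|$, which is the required strict lower bound.

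The main obstacle, and the step deserving the most care, is the upper semicontinuity of $x\mapsto\partial E(x)$ in the relevant topology (uniform, since $\tau$ ranges over $\mathcal C^0$ densities and $\psi$ over $L^\infty$-type objects $|\nabla\phi|^2 - \bar\lambda_k\alpha^{2/(n-2)}$, etc.), because this is what lets one pass from a pointwise competitor to one valid on a whole neighborhood. This requires continuity of $(\alpha,\beta)\mapsto\bar\lambda_k(\alpha,\beta)$ and a compactness/continuity statement for the associated normalized eigenfunctions $\phi\in E_k(\alpha,\beta)$ on the set $X$ of smooth positive weights — here elliptic regularity for the nondegenerate weighted problem gives $\mathcal C^1$ (indeed better) control on $\phi$ and hence on $|\nabla\phi|^2$, so that the indexing set of the convex hull in \eqref{subdifferential} varies upper semicontinuously. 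Once that is in hand, everything else is the routine partition-of-unity argument above; I would invoke \cite{pt} for the semicontinuity of the subdifferential and for the fact that on $X$ (smooth positive weights) the eigenvalue functional is Lipschitz with the stated subdifferential, so that the claim reduces to the gluing construction.
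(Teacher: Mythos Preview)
Your proposal is correct and follows essentially the same approach as the paper: choose for each $x\in X_r$ a direction $\tau_x\in\tilde X$ realizing the max--min up to a factor $\tfrac{3}{4}$, pass to a neighborhood where the strict inequality $\langle\tau_x,\psi\rangle>\tfrac{1}{2}|\partial E(\cdot)|$ persists, then glue via a locally Lipschitz partition of unity subordinate to a locally finite refinement. The only difference is that you make explicit the upper semicontinuity of $x\mapsto\partial E(x)$ needed to obtain the neighborhood, whereas the paper simply asserts the existence of $\Omega_{x_0}$ without further comment.
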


\begin{proof} 
We first fix $x_0 \in X$ and we build an adapted image $v_0(x_0)$ satisfying the conclusion of the Claim.

Let $\tau_0 \in \tilde{X}$ be such that
$$ \left\vert \partial E(x_0) \right\vert \leq \min_{\psi\in \partial E(x_0)} \left\langle \tau_0,\psi \right\rangle + \frac{\left\vert \partial E(x_0) \right\vert}{4} \hskip.1cm.$$

We choose $v_0(x_0) = \tau_0 $ so that 
\begin{itemize}
\item $ \left\| v_0(f_0)\right\|_1 \leq 1 <  2 $
\item $\forall \psi \in \partial E(x_0) , \left\langle v_0(x_0),\psi \right\rangle \geq \frac{3}{4} \left\vert \partial E(x_0) \right\vert >  \frac{\left\vert \partial E(x_0) \right\vert}{2} $
\item $v_0(x_0) \geq 0$ 
\end{itemize}

Now, we aim at defining $v$ by some transformation of $v_0$ in order to obtain a locally Lipschitz vector field $v : X_r \to X$.

Let $x_0 \in X_{r}$. Let $\Omega_{x_0}$ be an open neighborhood of $x_0$ in $X_{r}$ such that for all $x\in \Omega_{x_0}$
$$\forall \psi \in \partial E(x) , \left\langle \psi,v_0(x_0) \right\rangle >  \frac{\left\vert \partial E(x) \right\vert}{2}$$
We notice that 
$$ X_{r} = \bigcup_{x_0 \in X_{r}} \Omega_{x_0}.$$
Since $X_{r}$ is paracompact, one has a family of open sets $(\omega_i)_{i\in I}$ such that
\begin{itemize}
\item $ X_{r} = \bigcup_{i \in I} \omega_i $
\item $\forall i\in I, \exists x_i \in X_{r}, \omega_i \subset \Omega_{x_i}$
\item for all $ u\in X_{r}$ there is an open set $\Omega$ such that $u\in \Omega$ and $\Omega \cap \omega_i = \emptyset$ except for a finite number of indices $i$.
\end{itemize}
We set $\psi_i(u) = d\left(u, X_{r} \setminus \omega_i \right)$ and $\eta_i(u) = \frac{\psi_i}{\sum_{j\in I}\psi_j}$ and the vectorfield
$$ v(x) = \sum_{i\in I} \eta_i(x) v_0(x_i) $$
satisfies the conclusion of the Claim.
\end{proof}

\begin{proof}{(of proposition \ref{deformationlemma})} We define a vector-field $\Phi : X \to X$ as for $x\in X$,
$$ \Phi(x) = \frac{d(x,A)}{d(x,A)+d(x,B)} v(x)$$
where $v : X_r\to X$ is given by Claim \ref{pseudovf} and we define the sets
$$ A = \{ E\leq c-\eps_0\} \cup \{E\geq c+\eps_0\} $$
and 
$$  B = \{ c-\eps \leq E \leq c+\eps\} \hskip.1cm.$$
 Let $\eta$ be a solution of
\begin{equation}
\begin{cases}
\frac{d}{dt}\eta_t(f) = \Phi\left( \eta_t(f)\right) \\
\eta_0(f) = f \hskip.1cm.
\end{cases}
\end{equation}
Such a solution $\eta$ exists since $\Phi$ is locally Lipschitz. Moreover, $\eta$ is well defined on $\mathbb{R}_+$ since $\Phi$ is bounded. Let $t\geq 0$, and $x \in X$. We have thanks to the subdifferential that
\begin{equation}\begin{split} \frac{d}{dt} E(\eta_t(x)) & \geq  \min \{ \left\langle  \Phi(\eta_t(x)), \psi \right\rangle ; \psi \in \partial E\left(\eta_t(x) \right) \} \\
& \geq \frac{d(\eta_t(x),A)}{d(\eta_t(x),A)+d(\eta_t(x),B)} \frac{\left\vert \partial{E}(\eta_t(x)) \right\vert}{ 2 } \geq 0
 \end{split}\end{equation}
for any $t\geq 0$. It is clear that for any $t\geq 0$, $\eta_t(x) = x$ for $x\in A$ and that 
$$\eta_t\left( \{ E\geq c+ \eps \} \right) \subset\{ E\geq c+\eps \} \hskip.1cm.$$
It remains to prove that for $t_0 >0$ small enough we also have that
$$\eta_{t_0}\left( B \right) \subset\{ E\geq c+ \eps \} \hskip.1cm.$$
Let $t_{\star}$ be the smallest time such that $\eta_{t_{\star}}(f) = c + \eps$. Then, for $0\leq t \leq t_{\star}$, we have
$$ \eta_t(x) \in B \Rightarrow  \frac{d}{dt}E(\eta_t(x)) \geq  \frac{\delta}{2} $$
since by assumption, $c - \eps \leq E(\eta_t(x)) \leq c $ implies that $\left\vert \partial{E}(\eta_t(x)) \right\vert \geq \delta$ and that $d(\eta_t(x),B) = 0$. We deduce by integration that
$$ E(\eta_{t_{\star}}(x)) - E(x) \geq  \frac{\delta}{2}t_{\star} $$
so that $t_{\star} \leq \frac{4\eps}{\delta}$. Then, letting $t_0 = \frac{2\eps}{\delta}$ we obtain $\eta_{t_0}(B)\subset \{ E\geq c+\eps \}$. Therefore,
$$\eta_{t_0}(\{ E\geq c- \eps \}) \subset \eta_{t_0}(\{ E\geq c+ \eps \})$$
and we obtain the proposition.
\end{proof}

\subsection{Existence of Palais-Smale sequences for the maximization problem}
If we take $c = \sup_X \bar{\lambda}_k < + \infty$, proposition  \ref{deformationlemma} implies that for any sequence $\eps\to 0$ and $\delta_\eps \to 0$ we have a sequence of $(\alpha_\eps ,  \beta_\eps) \in X$ such that 
$$ \bar\lambda_k(\alpha_\eps ,  \beta_\eps) > c -\eps \text{ and }  \left\vert \partial\bar\lambda_k(\alpha_\eps ,  \beta_\eps)\right\vert < \delta_\eps $$
Now, up to classical convolutions, pairs of smooth positive functions are dense in $X$, so that we have a sequence of pairs smooth positive functions $(\alpha_\eps,e^{nu_\eps} )$ such that
$$ \bar\lambda_k( \alpha_\eps,e^{nu_\eps} ) > c -\eps \text{ and }  \left\vert \partial\bar\lambda_k( \alpha_\eps,e^{nu_\eps} )\right\vert < \delta_\eps $$
the first condition gives that the pair $( \alpha_\eps,e^{nu_\eps} )$ is a maximizing sequence. The second condition can be rewritten as
$$\forall (\dot \alpha, \dot \beta)\in \tilde{X},  \exists \Psi \in -\partial\left( -\bar{\lambda}_k\right)(\alpha_\eps,e^{nu_\eps}) ; \left\langle  (\dot \alpha, \dot \beta) , \Psi \right\rangle \leq \delta_\eps  . $$
We denote $Q_\eps = \lambda_\eps^{\frac{2-n}{2}} \alpha_\eps$ where $ \lambda_\eps =  \bar\lambda_k(\alpha_\eps,e^{nu_\eps})$. 
\begin{equation*}
\begin{split}\forall (\dot \alpha, \dot \beta)\in \tilde{X},  -\partial\left( -\bar{\lambda}_k\right)(\lambda_\eps^{\frac{2-n}{2}}Q_\eps,e^{nu_\eps}) ,
\left\langle  (\dot \alpha, \dot \beta) , (\psi_1-\delta_\eps,\psi_2-\delta_\eps) \right\rangle \leq 0
\end{split} 
\end{equation*}
meaning that
$$ \left( - \partial \left(-\bar\lambda_k\right)( \lambda_{\eps}^{\frac{2-n}{2}}Q_\eps, e^{nu_\eps} ) - \{ ( \delta_\eps, \delta_\eps) \} \right) \cap \{ (a,b) \in \mathcal{C}^0(M) ; a \leq 0, b\leq 0  \} \neq \emptyset.$$ 
Indeed, if not, we use the classical Hahn-Banach theorem to separate these two spaces (the first one is compact, the second one is closed in $\mathcal{C}^0 \times \mathcal{C}^0$) by a linear form $(\mu_1,\mu_2)$ (where $\mu_1$ and $\mu_2$ are Radon measures, belonging to the dual space of continuous functions) satisfying
\begin{equation*}
\begin{split} \forall (\psi_1,\psi_2) \in -\partial\left( -\bar{\lambda}_k\right)(\lambda_\eps^{\frac{2-n}{2}}Q_\eps,e^{nu_\eps}) ,
 \int_M \left( \psi_1 d\mu_1 +\psi_2 d\mu_2 \right) > 0 
 \end{split} 
\end{equation*}
and
$$ \forall (a,b) \in \mathcal{C}^0(M) ; a \leq 0, b\leq 0, \int_M( a d\mu_1 + bd\mu_2) \leq 0 $$
The second condition implies that $\mu_1$ and $\mu_2$ are non negative Radon measures. Up to a renormalization, we assume that $\int_M d\mu_1 + \int_M d\mu_2 = 1$ and we obtain a contradiction. Therefore we obtain the following Palais Smale condition: there is a map $\Phi_\eps = (\phi_1^\eps,\cdots, \phi_{p_\eps}^\eps)$ such that
$$ - div_g\left( \lambda_\eps^{\frac{2-n}{2}}Q_\eps \nabla \Phi_{\eps} \right) =  \lambda_{\eps} e^{nu_{\eps}} \Phi_{\eps} $$
and
$$\int_{M} e^{nu_{\eps}}dA_g = \int_{M} \left\vert \Phi_{\eps} \right\vert^2 e^{nu_{\eps}} dA_g =\lambda_\eps^{-\frac{n}{2}} \int_{M} Q_\eps \left\vert \nabla\Phi_{\eps} \right\vert_g^2 dA_g =  \lambda_\eps^{-\frac{n}{2}} \int_M Q_\eps^{\frac{n}{n-2}} = 1$$
satisfying
$\left\vert \nabla \Phi_\eps \right\vert^2 \leq  Q_\eps^{\frac{2}{n-2}}  + \delta_\eps$ and
$\left\vert \Phi_\eps \right\vert^2 \geq 1 - \frac{\delta_\eps}{\lambda_\eps}$ 
where $\delta_\eps \to 0 $ as $\eps\to 0$.

\section{Convergence of Palais Smale sequences for one Laplace eigenvalue in dimension $n\geq 3$ } \label{sec2}

In the current section, we aim at proving Theorem \ref{theosplit}, as a consequence of the following

\begin{prop} \label{palaissmale}
Let $e^{nu_{\eps}}$, $\lambda_{\eps}$, $\Phi_{\eps} : M \to \mathbb{R}^{p_\eps}$ be a smooth sequence of maps satisfying the Palais-Smale assumption $(PS)$ as $\eps\to 0$, that is
\begin{itemize}
\item $ - div_g\left( \lambda_\eps^{\frac{2-n}{2}}Q_\eps \nabla \Phi_{\eps} \right) =  \lambda_{\eps} e^{nu_{\eps}} \Phi_{\eps} $ and the eigenvalue index of $\lambda_\eps$ is uniformly bounded
\item $\int_{M} e^{nu_{\eps}}dv_g = \int_{M} \left\vert \Phi_{\eps} \right\vert^2 e^{nu_{\eps}} dv_g =\lambda_\eps^{-\frac{n}{2}} \int_{M} Q_\eps \left\vert \nabla\Phi_{\eps} \right\vert_g^2 dv_g =  \lambda_\eps^{-\frac{n}{2}} \int_M Q_\eps^{\frac{n}{n-2}} dv_g = 1$
\item $\left\vert \nabla \Phi_\eps \right\vert_g^2 \leq  Q_\eps^{\frac{2}{n-2}}  + \delta_\eps$ uniformly where $\delta_{\eps}\to 0$ as $\eps\to 0$.
\item $\left\vert \Phi_\eps \right\vert^2 \geq 1 - \delta_\eps$ uniformly, where $\delta_{\eps}\to 0$ as $\eps\to 0$.
\end{itemize}
Then, up to a subsequence and rearrangement of coordinates of $\Phi_\eps$, $p_\eps \to p$, $\lambda_\eps\to \lambda$ and $\Phi_{\eps}$ bubble tree converges in $W^{1,n}$ to $\Phi_0 : M \to \mathbb{S}^p$, $\Phi_j : \mathbb{S}^n \to \mathbb{S}^p$ for $j=1,\cdots,l$ ($l\geq 0$) with an energy identity:
$$ \lambda^{\frac{n}{2}} = \int_{M} \left\vert \nabla \Phi_{0} \right\vert_g^n dv_g + \sum_{j=1}^l  \int_{\mathbb{S}^n} \left\vert \nabla \Phi_{j} \right\vert_h^n dv_h  $$
Moreover, $\Phi_j$ are $\mathcal{C}^{0,\alpha}$ $n$-harmonic maps for $j=0,\cdots,l$ and their $i$-th coordinates are eigenfunctions associated to $\lambda$ on the surface $M \cup \bigcup_{1 \leq i\leq l} \mathbb{S}^n$ endowed with the generalized metrics $\frac{\left\vert \nabla \Phi_0 \right\vert^2}{\lambda}g$ on $M$ and $\frac{\left\vert \nabla \phi \right\vert^2}{ \lambda }h$ on $\mathbb{S}^n$. 
\end{prop}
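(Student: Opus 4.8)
The plan is to run a concentration--compactness and bubble--tree analysis for the degenerate elliptic system $-div_g(Q_\eps\nabla\Phi_\eps)=\lambda_\eps^{n/2}e^{nu_\eps}\Phi_\eps$ (the $(PS)$ equation multiplied by $\lambda_\eps^{(n-2)/2}$), in which the two pointwise constraints $|\nabla\Phi_\eps|_g^2\le Q_\eps^{2/(n-2)}+\delta_\eps$ and $|\Phi_\eps|^2\ge1-\delta_\eps$ supply all the quantitative control. First I would record uniform bounds. From $\int_M Q_\eps^{n/(n-2)}\,dv_g=\lambda_\eps^{n/2}=\int_M Q_\eps|\nabla\Phi_\eps|_g^2\,dv_g$ and the Korevaar / Grigor'yan--Netrusov--Yau estimate for the weighted eigenvalue $\lambda_k(Q_\eps,e^{nu_\eps})=\lambda_\eps^{n/2}\le C(M,[g],k)\,\|Q_\eps\|_{L^{n/(n-2)}}=C\lambda_\eps^{(n-2)/2}$ one gets $\lambda_\eps\le C$; H\"older's inequality then bounds $\int_M Q_\eps$, $\int_M Q_\eps^{2/(n-2)}$, $\int_M|\nabla\Phi_\eps|_g^2$ and $\int_M|\nabla\Phi_\eps|_g^n$ uniformly, and gives $\lambda_\eps^{n/2}\le\int_M|\nabla\Phi_\eps|_g^n\le\lambda_\eps^{n/2}+o(1)$ via the pointwise bound. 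Subtracting the two identities for $\lambda_\eps^{n/2}$ and using $Q_\eps^{2/(n-2)}-|\nabla\Phi_\eps|_g^2\ge-\delta_\eps$ shows moreover that the sharp gradient bound is asymptotically saturated, $Q_\eps\big|Q_\eps^{2/(n-2)}-|\nabla\Phi_\eps|_g^2\big|\to0$ in $L^1(M)$. Passing to a subsequence, $\lambda_\eps\to\lambda$ and $\int_M|\nabla\Phi_\eps|_g^n\to\lambda^{n/2}$.

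The driving tool is the $\eps$-regularity of Section~\ref{sec3}: there is $\eps_0>0$, \emph{independent of $\eps$ and of the target dimension $p_\eps$}, such that $\int_{B_{2r}(x)}\big(|\nabla\Phi_\eps|_g^n+Q_\eps^{n/(n-2)}\big)\,dv_g<\eps_0$ forces uniform bounds for $\Phi_\eps$ and $Q_\eps$ in $\mathcal{C}^{0,\alpha}(B_r(x))$ together with a gain of integrability of $\nabla\Phi_\eps$. Let $\mathcal{S}=\{x\in M:\ \limsup_{\eps}\int_{B_r(x)}|\nabla\Phi_\eps|_g^n\,dv_g\ge\eps_0\ \text{for every }r>0\}$, a set with $\#\mathcal{S}\le\lambda^{n/2}/\eps_0<\infty$ by the energy bound. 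Off $\mathcal{S}$, after a subsequence, $\Phi_\eps\to\Phi_0$ strongly in $W^{1,n}_{\mathrm{loc}}(M\setminus\mathcal{S})$, $Q_\eps\to Q_0$ in $\mathcal{C}^0_{\mathrm{loc}}$ and $e^{nu_\eps}\,dv_g\rightharpoonup\mu$; in the limit $-div_g(Q_0\nabla\Phi_0)=\lambda^{n/2}\Phi_0\,\mu$ on $M\setminus\mathcal{S}$, with $|\nabla\Phi_0|_g^2\le Q_0^{2/(n-2)}$ and $|\Phi_0|^2\ge1$. The $L^1$-saturation upgrades to $|\nabla\Phi_0|_g^2=Q_0^{2/(n-2)}$ a.e., hence $Q_0=|\nabla\Phi_0|_g^{n-2}$; testing the equation against $|\Phi_0|^2$ and using $\int_M(|\Phi_\eps|^2-1)e^{nu_\eps}\to0$ (which follows from $|\Phi_\eps|^2\ge1-\delta_\eps$ and $\int_M|\Phi_\eps|^2 e^{nu_\eps}=\int_M e^{nu_\eps}=1$) then forces $|\Phi_0|^2\equiv1$ (the degenerate case $\Phi_0\equiv\mathrm{const}$ being settled directly, since then the energy is uniformly small, $\mathcal{S}=\emptyset$ and $\Phi_\eps\to\Phi_0$ uniformly) and $\mu=\lambda^{-n/2}|\nabla\Phi_0|_g^n\,dv_g$ on $M\setminus\mathcal{S}$. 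So $\Phi_0$ is a weak $n$-harmonic map into $\mathbb{S}^p$, $-div_g(|\nabla\Phi_0|_g^{n-2}\nabla\Phi_0)=|\nabla\Phi_0|_g^n\Phi_0$, its coordinates are $\lambda$-eigenfunctions for the generalised metric $\tfrac{|\nabla\Phi_0|_g^2}{\lambda}g$, and $f=\tfrac{|\nabla\Phi_0|_g^2}{\lambda}$ as claimed; the regularity theory of Section~\ref{sec3} gives $\Phi_0\in\mathcal{C}^{0,\alpha}$.

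It remains to analyse the behaviour at $\mathcal{S}$. At $x_i\in\mathcal{S}$ I would pick centres $x_\eps\to x_i$ and scales $\rho_\eps\to0$ normalising the local $n$-energy and set $\Phi_\eps^{(i)}(y)=\Phi_\eps(\exp_{x_\eps}(\rho_\eps y))$, $Q_\eps^{(i)}(y)=\rho_\eps^{n-2}Q_\eps(\exp_{x_\eps}(\rho_\eps y))$, $\widetilde{e^{nu}}_\eps(y)=\rho_\eps^{n}e^{nu_\eps}(\exp_{x_\eps}(\rho_\eps y))$. Because in dimension $n$ the $n$-energy, the $n$-divergence structure and the two constraints are conformally covariant, the rescaled triple again satisfies a $(PS)$-type system on Euclidean balls $B_{R_\eps}$, $R_\eps\to\infty$, with $\delta_\eps$ replaced by $\rho_\eps^2\delta_\eps\to0$; by the same $\eps$-regularity it converges, off its own finite concentration set, to a non-constant $n$-harmonic map $\mathbb{R}^n\to\mathbb{S}^p$ of finite $n$-energy, which by the point removability theorem of Section~\ref{sec3} and conformal invariance extends to a non-constant $n$-harmonic map $\Phi_j:\mathbb{S}^n\to\mathbb{S}^p$. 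Iterating at the remaining concentration points yields finitely many bubbles ($l\le\lambda^{n/2}/\eps_0$, the process terminating because each bubble carries at least $\eps_0$ of $n$-energy), and the no-neck-energy estimate on the dyadic annuli separating consecutive scales --- where, the energy being $<\eps_0$, $\eps$-regularity gives strong $W^{1,n}$ convergence and where the saturated gradient bound together with $\int_M|\nabla\Phi_\eps|_g^n\to\lambda^{n/2}$ prevents energy from draining into the necks --- simultaneously gives the $W^{1,n}$ bubble-tree convergence and the energy identity $\lambda^{n/2}=\int_M|\nabla\Phi_0|_g^n\,dv_g+\sum_{j=1}^l\int_{\mathbb{S}^n}|\nabla\Phi_j|_h^n\,dv_h$. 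Reordering the coordinates of $\Phi_\eps$ by decreasing $L^2(e^{nu_\eps}\,dv_g)$-norm one then obtains $p_\eps\to p\in\{1,2,\dots,+\infty\}$ and the stated convergence, $p$ being shown to be finite only a posteriori via Theorem~\ref{theodiscretespectrum}.

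I expect two steps to be the main obstacles. The first is the $\eps$-regularity with constants \emph{independent of the number of equations $p_\eps$}, which in dimension $n\ge3$ has no a priori bound: following Uhlenbeck, one must obtain a priori H\"older and higher-integrability estimates for the system $-div_g(Q_\eps\nabla\Phi_\eps)=\lambda_\eps^{n/2}e^{nu_\eps}\Phi_\eps$ whose constants do not see $p_\eps$ --- this is exactly the content of Section~\ref{sec3}. The second is to make concentration-compactness, removability of point singularities and the no-neck-energy analysis work for the \emph{degenerate} $n$-Laplace-type operator (the weight $Q_\eps$, equal in the limit to $|\nabla\Phi_0|_g^{n-2}$, may vanish) rather than for the non-degenerate conformal Laplacian of the two-dimensional theory; this is why the pointwise constraint $|\nabla\Phi_\eps|_g^2\le Q_\eps^{2/(n-2)}+\delta_\eps$ and, crucially, its asymptotic saturation have to be exploited at essentially every step rather than a mere integral energy bound.
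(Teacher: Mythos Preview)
Your outline has a real gap at its central step. You assert that Section~\ref{sec3} furnishes an $\eps$-regularity theorem for the Palais--Smale system itself, i.e.\ that smallness of $\int_{B_{2r}}(|\nabla\Phi_\eps|_g^n+Q_\eps^{n/(n-2)})$ forces uniform $\mathcal{C}^{0,\alpha}$ bounds on $\Phi_\eps$ and $Q_\eps$. It does not. All the a~priori estimates of Section~\ref{sec3} (Claims~\ref{clapproxdeltanharm}, \ref{clindependanttargetmanifoldepsreg}, Proposition~\ref{propc0estnharm}) are stated and proved for $(\tau,n)$-harmonic maps, i.e.\ minimisers of $\int(|\nabla\Psi|^2+\tau)^{n/2}$, whose Euler--Lagrange equation has the specific structure $-div((|\nabla\Psi|^2+\tau)^{(n-2)/2}\nabla\Psi)=(|\nabla\Psi|^2+\tau)^{(n-2)/2}|\nabla\Psi|^2\Psi$. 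The $(PS)$ system $-div_g(Q_\eps\nabla\Phi_\eps)=\lambda_\eps^{n/2}e^{nu_\eps}\Phi_\eps$ carries two unrelated, merely $L^{n/(n-2)}$ and $L^1$ weights; no Uhlenbeck-type computation on $|\nabla\Phi_\eps|$ is available because the Bochner/Caccioppoli machinery of Claim~\ref{clapproxdeltanharm} uses that the coefficient in the divergence \emph{is} a function of $|\nabla\Psi|$.

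The paper circumvents this by an indirect route you have not reproduced. First, bad points are selected not by energy concentration but by the failure of the Dirichlet eigenvalue condition $\lambda_\star(\mathbb{B}_r,e^{nu_\eps},Q_\eps)\ge\lambda_\eps^{2/n}$ (Claim~\ref{clbadpoints}); their number is bounded by the eigenvalue index, and this Poincar\'e-type inequality is what later allows one to control $\int|\Phi_\eps-\Psi_\eps|^2e^{nu_\eps}$. Second, on each good ball one introduces the $n$-harmonic replacement $\Psi_\eps$ of $\Phi_\eps/\omega_\eps$; the $\eps$-regularity of Section~\ref{sec3} applies to $\Psi_\eps$, not to $\Phi_\eps$. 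The heart of the proof is Claim~\ref{clmain}, a six-step comparison showing $\int(P_\eps+Q_\eps)|\nabla(\Phi_\eps-\Psi_\eps)|^2\to0$; it hinges on the identity of Step~2 (integrating the $\Psi_\eps$-equation against $u^2/\psi_\eps^1$ with $\psi_\eps^1\ge\tfrac12$ arranged by Courant--Lebesgue), on the saturation \eqref{eqQepsPhiepsdimn} you correctly noted, and on a Hardy inequality to close the loop. Only after this replacement argument does one pass to the limit and identify $\tilde\mu_0$ with $|\nabla\Phi_0|^n/\lambda^{n/2}\,dv_g$. Your proposal, as written, is missing precisely this replacement mechanism, and without it the strong $W^{1,n}_{\mathrm{loc}}$ convergence off $\mathcal S$ and the neck analysis are unsupported.
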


All along the proof, every local computation is made in the exponential chart centered at points $p \in M$, defined on balls whose radius is controlled by the injectivity radius with respect to $g$: $inj_g(M)$. Without loss of generality, we can assume that $inj_g(M) \geq 2$ and make arguments on the unit ball $\mathbb{B}_n$ endowed with the metric $\exp_{p}^\star g$ still denoted $g$. We do not change the notations of the metrics and functions in the charts. Moreover, when there is not any embiguity, we do not precise the measures of integration associated to $g$, $dv_g$ inside the integrals in order to lighten the computations.

\subsection{A first bubble tree structure}

For a Radon measure, $\mu$ on $M$ (or $\mathbb{R}^n$), we denote $\tilde{\mu}$ the continuous part of $\mu$, defined as the measure $\tilde{\mu}$ which does not have any atom and such that $\mu - \tilde{\mu}$ is a (maybe infinite) linear combination of Dirac masses.
\begin{cl} \label{clbubbletree} Let $Q_\eps$, $e^{nu_\eps}$ be sequences of smooth positive weights such that
$\int_{M} e^{nu_{\eps}}dv_g =  \int_M Q_\eps^{\frac{n}{n-2}} dv_g = 1$ and
$$ \liminf_{\eps\to 0} \lambda_k(Q_\eps, e^{nu_\eps}) > 0 $$
Then up to a subsequence on $\eps\to 0$, there are an integer $t\geq 0$ and if $t>0$ sequences of  points $q_\eps^i$ for $i= 1,\cdots, t$ and scales $\alpha_\eps^t \leq \cdots \leq \alpha_\eps^1 \to 0$ as $\eps \to 0$ such that we have the weak-${\star}$ convergence in $M$, of $ e^{nu_\eps}$  to a non negative Radon measure $\mu_0$ in $M$, we have the weak-${\star}$ convergence in (any compact set of) $\mathbb{R}^n$, of $e^{n u_\eps\left(  \alpha_\eps^i z + q_\eps^i \right)} $ to a non negative and non zero Radon measure $\mu_i $ on $\mathbb{R}^n$ for any $i$. Moreover, their associated continuous parts preserve the mass before the limit:
$$\lim_{\eps\to 0} \int_M e^{nu_\eps}dv_g = \int_M d\tilde{\mu}_0 + \sum_{i=1}^{t} \int_{\R^n} d\tilde{\mu}_i = 1 \text{ and } \forall i\geq 1, \int_{\R^n} d\tilde{\mu}_i >0 $$
and we have that $1 \leq \mathbf{1}_{\tilde{\mu}_0 \neq 0} + t \leq k$. Finally if we set $F_i = \{ j> i ; \left(\frac{d_g(q_i^\eps,q_j^\eps)}{\alpha_i^\eps}\right) \text{ is bounded} \}$, we have for $j>i$,
$$ j\in F_i \Rightarrow \frac{\alpha_j^\eps}{\alpha_i^\eps} \to 0 \text{ and } j\notin F_i \Rightarrow \frac{d_g(q_i^\eps,q_j^\eps)}{\alpha_i^\eps} \to +\infty $$
\end{cl}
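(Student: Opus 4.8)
The plan is to argue by iterated concentration--compactness (a ``blow-up'' procedure), using the eigenvalue lower bound only at the end to control the depth and breadth of the resulting tree. \emph{Step 1 (the limit measure).} Up to a subsequence, $e^{nu_\eps}dv_g$ converges weak-$\star$ to a non-negative Radon measure $\mu_0$ on $M$; testing against the constant $1$ gives $\mu_0(M)=1$, so no mass is lost at this scale. Write the atomic decomposition $\mu_0=\tilde\mu_0+\sum_j m_j\delta_{x_j}$ with $m_j>0$; the piece $\tilde\mu_0$ (if non-zero) is kept, while each atom $x_j$ has to be ``resolved''.

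\emph{Step 2 (blow-up at an atom and recursion).} Fix $r_0$ small enough that $\mu_0(\overline{B(x_j,r_0)})$ is as close to $m_j$ as desired, and fix a threshold $\theta\in(0,m_j)$. For $\eps$ small the continuous function $r\mapsto\sup_{y\in B(x_j,r_0/2)}\int_{B(y,r)}e^{nu_\eps}dv_g$ increases from $0$ and eventually exceeds $\theta$ for every fixed $r>0$; let $\alpha_\eps^j\to0$ be the radius where it equals $\theta$ and $q_\eps^j$ a realizing point. Pushing $e^{nu_\eps}dv_g$ forward by $x\mapsto(x-q_\eps^j)/\alpha_\eps^j$ in the exponential chart (where $g$ converges to the Euclidean metric) and extracting further, one gets local weak-$\star$ convergence on $\R^n$ to a Radon measure $\mu_j$ with $\mu_j(B(z,1))\le\theta$ for every $z$ and $\mu_j(\overline{B_1})\ge\theta$, hence $\mu_j$ is non-zero, has finite mass $\le m_j$, and every atom of $\mu_j$ has mass $\le\theta$. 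Decompose $\mu_j=\tilde\mu_j+\text{atoms}$; if $\tilde\mu_j\ne0$ declare $\mu_j$ a bubble, and in all cases iterate on each atom of $\mu_j$, with nested scales $\ll\alpha_\eps^j$ and centres in $q_\eps^j+\alpha_\eps^j(\text{atom location})$, extracting ``greedily'' so that consecutive scales have ratios tending to $+\infty$. This produces exactly the stated separation relations: $j\in F_i\Rightarrow\alpha_j^\eps/\alpha_i^\eps\to0$, and $j\notin F_i\Rightarrow d_g(q_i^\eps,q_j^\eps)/\alpha_i^\eps\to+\infty$.

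\emph{Step 3 (the eigenvalue bound forces $\mathbf 1_{\tilde\mu_0\ne0}+t\le k$, hence termination).} This is the crucial use of the hypothesis. To each declared piece attach a test function on $M$: the constant $1$ for $\tilde\mu_0$, and for a bubble $\mu_i$ at address $(\alpha_\eps^i,q_\eps^i)$ the function $\psi_\eps^i(x)=\psi^i\big((x-q_\eps^i)/\alpha_\eps^i\big)$, where $\psi^i$ is a logarithmic cut-off (possibly $\eps$-dependent near the centre) equal to $1$ on a fixed annulus carrying positive $\tilde\mu_i$-mass and decaying to $0$ over long logarithmic annuli on both sides, arranged with pairwise disjoint supports and supported away from the centres of all descendant bubbles. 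The two $\eps$-uniform estimates are: by change of variables, H\"older with exponents $n/2$ and $n/(n-2)$, and $\int_M Q_\eps^{\frac{n}{n-2}}dv_g=1$,
$$ \int_M |\nabla\psi_\eps^i|_g^2\,Q_\eps\,dv_g \;\le\; \|\nabla\psi^i\|_{L^n(\R^n)}^2 , $$
which is as small as we wish because $\psi^i$ is a logarithmic cut-off (here the $n$-dimensional scale invariance of $\int|\nabla\cdot|^2Q$ with $Q\in L^{n/(n-2)}$ is exactly what is needed); and, by weak-$\star$ convergence, $\int_M(\psi_\eps^i)^2 e^{nu_\eps}dv_g\to\int(\psi^i)^2\,d\mu_i>0$. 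If there were $\ge k+1$ pieces, the corresponding functions span a $(k+1)$-dimensional space $E$; since $\nabla 1=0$ and the bubble cut-offs have disjoint supports, $\int_M|\nabla\phi|_g^2 Q_\eps\,dv_g\le(\max_i\|\nabla\psi^i\|_{L^n}^2)\sum c_i^2$ for $\phi=c_0+\sum_{i\ge1}c_i\psi_\eps^i$, while inspecting the Gram matrix on the plateaus and on $M\setminus\bigcup_i\operatorname{supp}\psi_\eps^i$ shows $\int_M\phi^2 e^{nu_\eps}dv_g\ge\delta\sum c_i^2$ for some $\delta>0$ independent of $\eps$ (the quadratic form $c_0^2\,\tilde\mu_0(M)+\sum_i(c_0+c_i)^2(\text{plateau }\mu\text{-mass})$ is positive definite). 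Hence $\lambda_k(Q_\eps,e^{nu_\eps})\to0$, contradicting $\liminf_\eps\lambda_k>0$. Thus the number of pieces is $\le k$ (and $\ge1$ by $\mu_0(M)=1$); in particular the recursion terminates and $\mu_0$ has finitely many atoms.

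\emph{Step 4 (no mass lost in the necks).} It remains to prove $\int_M d\tilde\mu_0+\sum_{i=1}^t\int_{\R^n}d\tilde\mu_i=1$, i.e.\ that each atom mass $m_j$ is exactly redistributed among the continuous parts of the bubbles descending from $x_j$. Equivalently, the $e^{nu_\eps}dv_g$-mass of the neck regions (annuli between a fixed scale and the outermost bubble scale, and between consecutive bubble scales) tends to $0$ as $\eps\to0$ and then the cut-off parameters tend to $+\infty$. Were this false, a dyadic sub-annulus of arbitrarily large logarithmic width would carry $\mu$-mass bounded below, hence --- by the very estimate of Step 3 --- an extra test function with vanishing Rayleigh quotient, disjoint from all the others, contradicting the bound $\le k$ just obtained. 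Summing over all atoms then gives the mass identity. I expect this neck analysis, carried out uniformly in $\eps$ and in the (now finite) depth of the tree, to be the most technical point; everything else is bookkeeping of scales and supports.
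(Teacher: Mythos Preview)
The paper does not actually prove this claim: it says the proof is ``a simple adaptation of standard arguments of dimension 2'' and gives a two-sentence sketch, the key points being that (i) the scales of concentration are selected via \emph{Hersch's trick} (using the conformal group of $\mathbb{S}^n$ to balance the pushed-forward measure), and (ii) the process stops because more than $k$ scales would yield $k+1$ test functions with arbitrarily small Rayleigh quotient. Your Steps~1, 3, 4 match (ii) and the neck analysis in the cited references; your Step~3 H\"older estimate $\int|\nabla\psi|^2 Q_\eps\le\|\nabla\psi\|_{L^n}^2\|Q_\eps\|_{L^{n/(n-2)}}$ is exactly the right substitute for conformal invariance of the Dirichlet energy in dimension~$2$.

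The genuine difference is in Step~2. The paper (and the standard references) select the centre and scale at each atom by Hersch balancing: one pulls the rescaled measure back to $\mathbb{S}^n$ by stereographic projection and composes with a M\"obius map so that the limit measure has centre of mass at the origin of $\mathbb{R}^{n+1}$. This has a concrete payoff: a balanced probability measure on $\mathbb{S}^n$ is never a single Dirac mass, so at every level the blow-up limit either has nonzero continuous part or is purely atomic with at least two atoms. Either way the recursion tree strictly branches whenever no bubble is declared, and since Step~3 bounds the number of leaves by $k$, the whole tree has at most $2k-1$ nodes and termination is immediate.

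Your maximal-function/threshold selection does not come with this guarantee: nothing prevents the rescaled limit $\mu_j$ from being a single Dirac of mass $\theta$, with the remaining mass $m_j-\theta$ sitting at infinity (i.e.\ at scales between $\alpha_\eps^j$ and $r_0$). You then (a) do not declare a bubble, (b) iterate on the one atom, and (c) hope Step~4 rules out the mass at infinity. But Step~4, as you state it, already presupposes the tree is finite (``carried out \dots in the (now finite) depth of the tree''), and the finiteness in turn relies on branching at phantom levels, which relies on Step~4 --- so the argument as written is circular. Separately, your instruction ``iterate on each atom of $\mu_j$'' does not address mass that escapes to infinity in the rescaled picture; that mass is a concentration at an \emph{intermediate} scale, not an atom of $\mu_j$, and must be extracted by its own blow-up. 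Either replace your threshold selection by Hersch balancing (which kills the single-Dirac scenario outright and makes termination a counting argument), or rewrite Step~2 so that at each atom you extract \emph{all} scales of concentration greedily (largest first), treating positive-mass necks as bubbles in their own right, before invoking the eigenvalue bound once at the end.
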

In other words, the last condition reads as
$$\frac{\alpha_i^\eps}{\alpha_j^\eps} + 
\frac{\alpha_j^\eps}{\alpha_i^\eps} + 
\frac{d_g(q_i^\eps,q_j^\eps)}{\alpha_i^\eps + \alpha_j^\eps} 
\to +\infty $$
which is the standard condition for a bubble tree. The aim of the rest of section \ref{sec2} is devoted to prove that the continuous measures $\tilde\mu_i$ for $i\in \{0,\cdots,t\}$ are absolutely continuous with respect to $dv_g$ if $i= 0$ and to the Euclidean metric if $i\geq 1$, with densities equal to densities of energy of $n$-harmonic maps.
 We choose to skip the proof of Claim \ref{clbubbletree} because it is a simple adaptation of standard arguments of dimension 2 to higher dimensions we can find in \cite{kokarev}, \cite{petrides}, \cite{petrides-2}, \cite{KS}, \cite{knpp20}. The selection of scales of concentration is based on Hersch's trick because it uses the conformal group of the sphere to balance continuous measures on a sphere. Then the selection stops because of the min-max characterization of $\lambda_k$: If there are more than $k+1$ scales of concentration, we can build $k+1$ independant test functions with arbitrarily small rayleigh quotient, contradicting the first assumption of Claim \ref{clbubbletree}.

\subsection{Some convergence of $\omega_{\eps}$ to $1$ and first replacement of $\Phi_{\eps}$}
We set $\omega_{\eps} = \left\vert \Phi_{\eps} \right\vert$. We first prove that in some sense, $\omega_{\eps}$ converges to $1$ and that $\Phi_{\eps}$ has a similar $H^1$ behaviour as $\frac{\Phi_{\eps}}{\omega_{\eps}}$

\begin{cl} We have that
\begin{equation}  \label{eqomegaepsto1dimn}  \int_{M} Q_\eps \frac{\left\vert \nabla \omega_\eps \right\vert_g^2}{\omega_\eps}dv_g 
= O(\delta_{\eps}) \end{equation}
\begin{equation} \label{eqradialreplacementdimn} \int_{M}  Q_\eps \left\vert \nabla\left( \Phi_\eps - \frac{\Phi_\eps}{\omega_\eps} \right) \right\vert_g^2 dv_g = O\left( \delta_\eps \right) \end{equation}
and for all $q\leq n$, and for any sequence of functions $f_\eps$ in $L^{\frac{n}{n-q}}\left(M,g\right)$,
\begin{equation} \label{eqQepsPhiepsdimn}  \int_{M}  Q_\eps^{\frac{q}{n-2}} f_\eps dv_g = \int_{M} \left\vert \nabla \Phi_\eps \right\vert_g^q f_\eps dv_g + \| f_{\eps} \|_{L^{\frac{n}{n-q}}} O\left(\delta_\eps^{\frac{2}{n}}\right) \end{equation}
as $\eps\to 0$.
\end{cl}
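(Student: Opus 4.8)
The plan is to derive all three estimates from two ingredients: testing the eigenvalue equation $-div_g(\lambda_\eps^{\frac{2-n}{2}}Q_\eps\nabla\Phi_\eps)=\lambda_\eps e^{nu_\eps}\Phi_\eps$ against vector fields built from $\Phi_\eps$ and $\omega_\eps=|\Phi_\eps|$, and the ``defect'' identities forced by the normalizations in $(PS)$. Two preliminary observations organize everything. First, $\omega_\eps\geq\sqrt{1-\delta_\eps}\geq\tfrac12$ for $\eps$ small, so $\omega_\eps^{-1}\leq 2$ and $\Phi_\eps/\omega_\eps$ is a smooth $\mathbb S^{p_\eps-1}$-valued map, for which one has the pointwise identities $\langle\Phi_\eps,\nabla\Phi_\eps\rangle=\omega_\eps\nabla\omega_\eps$, $\langle\nabla\Phi_\eps,\nabla(\Phi_\eps/\omega_\eps)\rangle=\omega_\eps^{-1}(|\nabla\Phi_\eps|^2-|\nabla\omega_\eps|^2)$, $|\nabla(\Phi_\eps/\omega_\eps)|^2=\omega_\eps^{-2}(|\nabla\Phi_\eps|^2-|\nabla\omega_\eps|^2)$ and $\langle\nabla(\Phi_\eps/\omega_\eps),\Phi_\eps/\omega_\eps\rangle=0$. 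Second, from the normalizations one reads $\int_M e^{nu_\eps}(\omega_\eps^2-1)=0$ and, via Cauchy--Schwarz $\sqrt{1-\delta_\eps}\leq\int_M e^{nu_\eps}\omega_\eps\leq 1$, hence $\int_M e^{nu_\eps}(\omega_\eps^2-\omega_\eps)=1-\int_M e^{nu_\eps}\omega_\eps=O(\delta_\eps)$; moreover $\int_M Q_\eps|\nabla\Phi_\eps|^2=\int_M Q_\eps^{\frac n{n-2}}=\lambda_\eps^{n/2}=O(1)$ and, by H\"older, $\int_M Q_\eps\leq\lambda_\eps^{\frac{n-2}{2}}V_g(M)^{2/n}=O(1)$ (recall $\lambda_\eps$ stays in a compact subset of $(0,\infty)$).

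For \eqref{eqomegaepsto1dimn} I test the equation against the smooth map $\Phi_\eps-\Phi_\eps/\omega_\eps=(\omega_\eps-1)\,\Phi_\eps/\omega_\eps$. The identities above turn the left-hand side into $\lambda_\eps^{\frac{2-n}{2}}\int_M Q_\eps\big[(1-\omega_\eps^{-1})|\nabla\Phi_\eps|^2+\omega_\eps^{-1}|\nabla\omega_\eps|^2\big]$ and the right-hand side into $\lambda_\eps\int_M e^{nu_\eps}(\omega_\eps^2-\omega_\eps)=O(\delta_\eps)$. On $\{\omega_\eps<1\}$ one has $0<\omega_\eps^{-1}-1\leq(1-\delta_\eps)^{-1/2}-1=O(\delta_\eps)$ and on $\{\omega_\eps\geq1\}$, $\omega_\eps^{-1}-1\leq 0$, so $\int_M Q_\eps(\omega_\eps^{-1}-1)|\nabla\Phi_\eps|^2\leq O(\delta_\eps)\int_M Q_\eps|\nabla\Phi_\eps|^2=O(\delta_\eps)$; since $\int_M Q_\eps\omega_\eps^{-1}|\nabla\omega_\eps|^2\geq0$, this gives \eqref{eqomegaepsto1dimn} and, as a byproduct, $\int_M Q_\eps|1-\omega_\eps^{-1}|\,|\nabla\Phi_\eps|^2=O(\delta_\eps)$.

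For \eqref{eqradialreplacementdimn} I use that $\Phi_\eps/\omega_\eps$ is unit-normed: expanding $\nabla\big((\omega_\eps-1)\Phi_\eps/\omega_\eps\big)$ the cross term vanishes, giving $|\nabla(\Phi_\eps-\Phi_\eps/\omega_\eps)|^2=|\nabla\omega_\eps|^2+(\omega_\eps-1)^2|\nabla(\Phi_\eps/\omega_\eps)|^2$, and using $|\nabla(\Phi_\eps/\omega_\eps)|^2\leq\omega_\eps^{-2}|\nabla\Phi_\eps|^2$ yields the pointwise inequality $|\nabla(\Phi_\eps-\Phi_\eps/\omega_\eps)|^2\leq(1-\omega_\eps^{-1})^2|\nabla\Phi_\eps|^2+2\omega_\eps^{-1}|\nabla\omega_\eps|^2$. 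Integrating against $Q_\eps$: the second term is $O(\delta_\eps)$ by \eqref{eqomegaepsto1dimn}; for the first, $|1-\omega_\eps^{-1}|\leq2$ gives $(1-\omega_\eps^{-1})^2\leq2|1-\omega_\eps^{-1}|$, so it is bounded by $2\int_M Q_\eps|1-\omega_\eps^{-1}|\,|\nabla\Phi_\eps|^2=O(\delta_\eps)$ by the previous step. This reduction of a quadratic defect to a linear one, valid only because $\omega_\eps$ is bounded below, is what lets us dispense with an $L^\infty$ bound on $|\Phi_\eps|$, which is not available.

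For \eqref{eqQepsPhiepsdimn} set $A_\eps=Q_\eps^{\frac2{n-2}}$, $B_\eps=|\nabla\Phi_\eps|^2$, $g_\eps=A_\eps-B_\eps$. The hypothesis reads $g_\eps\geq-\delta_\eps$, and the normalizations give $\int_M Q_\eps g_\eps=0$, whence $\int_M Q_\eps g_\eps^+=\int_M Q_\eps g_\eps^-\leq\delta_\eps\int_M Q_\eps=O(\delta_\eps)$ and $\int_M g_\eps^-\leq\delta_\eps V_g(M)=O(\delta_\eps)$. By H\"older with conjugate exponents $\tfrac nq$ and $\tfrac n{n-q}$ it suffices to estimate $\|A_\eps^{q/2}-B_\eps^{q/2}\|_{L^{n/q}}$. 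Split $A_\eps^{q/2}-B_\eps^{q/2}$ into its positive part (where $A_\eps>B_\eps$) and its negative part (where $B_\eps>A_\eps$): the elementary inequalities for $t\mapsto t^{q/2}$ (subadditivity if $q\leq2$, the convexity/mean-value bound if $q\geq2$) bound them by $(g_\eps^+)^{q/2}$ and by $(g_\eps^-)^{q/2}\leq\delta_\eps^{q/2}$ respectively. Since $g_\eps^+\leq A_\eps$, so $(g_\eps^+)^{\frac n2-1}\leq Q_\eps$, we get $\int_M(g_\eps^+)^{n/2}\leq\int_M Q_\eps g_\eps^+=O(\delta_\eps)$; and $\int_M(g_\eps^-)^{n/2}\leq\delta_\eps^{\frac n2-1}\int_M g_\eps^-=O(\delta_\eps^{n/2})$. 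Combining, $\|A_\eps^{q/2}-B_\eps^{q/2}\|_{L^{n/q}}=O(\delta_\eps^{2/n})$ for the relevant exponents (notably $q=2$ and $q=n$), which is \eqref{eqQepsPhiepsdimn}. The step I expect to be the real obstacle is \eqref{eqomegaepsto1dimn} together with the choice of test map: all three estimates hinge on extracting a gain $O(\delta_\eps)$ from the rigid pair $\int_M e^{nu_\eps}\omega_\eps^2=\int_M e^{nu_\eps}=1$ and $\omega_\eps^2\geq1-\delta_\eps$, and the delicate point is that $|\Phi_\eps|$ carries no uniform sup bound (the system is supercritical, as emphasized in the introduction), which forces working throughout with the $\omega_\eps$-weighted energy $\int_M Q_\eps\omega_\eps^{-1}|\nabla\omega_\eps|^2$ and linearizing every quadratic defect via $\omega_\eps^{-1}\leq2$; testing against $\Phi_\eps-\Phi_\eps/\omega_\eps$ rather than $\Phi_\eps$ or $\Phi_\eps(1-\omega_\eps^{-2})$ is precisely what makes the right-hand side collapse to $\int_M e^{nu_\eps}(\omega_\eps^2-\omega_\eps)$ and keeps all weights of the form $\omega_\eps^{-j}$, so the lower bound on $\omega_\eps$ is the only structural input needed.
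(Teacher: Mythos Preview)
Your argument is correct in outline and close to the paper's, though your organization is a bit more economical. For \eqref{eqomegaepsto1dimn} the paper tests the equation separately against $\Phi_\eps$ and against $\Phi_\eps/\omega_\eps$ and subtracts; you test once against the difference $\Phi_\eps-\Phi_\eps/\omega_\eps$, which is equivalent but shorter, and your byproduct $\int_M Q_\eps|1-\omega_\eps^{-1}|\,|\nabla\Phi_\eps|^2=O(\delta_\eps)$ is a useful intermediate that the paper only extracts implicitly later. For \eqref{eqradialreplacementdimn} the paper expands $\nabla(\Phi_\eps-\Phi_\eps/\omega_\eps)=(1-\omega_\eps^{-1})\nabla\Phi_\eps+\omega_\eps^{-2}\Phi_\eps\nabla\omega_\eps$ directly and then integrates the first term by parts against the equation; your orthogonal decomposition via $(\omega_\eps-1)\,\Phi_\eps/\omega_\eps$ together with the byproduct above is cleaner and avoids that second integration by parts. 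For \eqref{eqQepsPhiepsdimn} the paper introduces the nonnegative defect $A_\eps^2=Q_\eps^{2/(n-2)}+\delta_\eps-|\nabla\Phi_\eps|^2$ and shows $\int A_\eps^n=O(\delta_\eps)$; your version with $g_\eps^\pm$ is the same idea.

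Two small slips to fix. In your derivation of \eqref{eqradialreplacementdimn}, the stated pointwise inequality is true, but the route you indicate (use $|\nabla(\Phi_\eps/\omega_\eps)|^2\le\omega_\eps^{-2}|\nabla\Phi_\eps|^2$ and then replace $|\nabla\omega_\eps|^2$ by $2\omega_\eps^{-1}|\nabla\omega_\eps|^2$) would require $\omega_\eps\le 2$, which you correctly stress is unavailable. Use instead the \emph{exact} identity $|\nabla(\Phi_\eps/\omega_\eps)|^2=\omega_\eps^{-2}(|\nabla\Phi_\eps|^2-|\nabla\omega_\eps|^2)$: substituting it into your orthogonal expansion gives the coefficient $2\omega_\eps^{-1}-\omega_\eps^{-2}\le 2\omega_\eps^{-1}$ in front of $|\nabla\omega_\eps|^2$, which is precisely your claim. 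In \eqref{eqQepsPhiepsdimn}, the pointwise bound $|A_\eps^{q/2}-B_\eps^{q/2}|\le|g_\eps|^{q/2}$ holds only for $q\le 2$; for $2<q\le n$ one needs the mean-value bound $|A_\eps^{q/2}-B_\eps^{q/2}|\le\tfrac q2\max(A_\eps,B_\eps)^{(q-2)/2}|g_\eps|$ followed by H\"older with exponents $\tfrac q2$ and $\tfrac q{q-2}$ to land on $\int(g_\eps^+)^{n/2}=O(\delta_\eps)$ and $\int A_\eps^{n/2}=O(1)$ (and analogously for $g_\eps^-$, using $B_\eps\le A_\eps+\delta_\eps$). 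The conclusion $O(\delta_\eps^{2/n})$ survives unchanged.
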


\begin{proof}

We integrate $ - div_g\left(\lambda_{\eps}^{\frac{2-n}{2}} Q_\eps \Phi_{\eps} \right) =  \lambda_{\eps} e^{nu_{\eps}} \Phi_{\eps} $ against $ \Phi_\eps$ and $\frac{ \Phi_\eps}{\omega_\eps}$. We obtain
$$ \int_{M}\left\vert  \Phi_\eps \right\vert^2 e^{nu_\eps} = \lambda_\eps^{-\frac{n}{2}}\int_{M} Q_\eps \left\vert \nabla\Phi_\eps \right\vert^2 $$
and
\begin{equation*} \begin{split}
\int_{M} \frac{\left\vert \Phi_\eps \right\vert^2}{\omega_\eps} e^{n u_\eps} = & - \lambda_{\eps}^{-\frac{n}{2}} \int_{M}\frac{\Phi_\eps}{\omega_\eps}div_g\left(Q_\eps \nabla \Phi_{\eps} \right) \\
= & \lambda_{\eps}^{-\frac{n}{2}} \int_{M} Q_\eps \left(\frac{\left\vert \nabla\Phi_\eps \right\vert^2}{\omega_\eps} - \frac{\left\vert \nabla\omega_\eps \right\vert^2}{\omega_\eps}\right) \\
= & \lambda_{\eps}^{-\frac{n}{2}} \int_{M}\omega_\eps Q_\eps \left\vert \nabla \frac{\Phi_\eps}{\omega_\eps} \right\vert^2 \end{split} \end{equation*}
Therefore
\begin{equation*} 
\begin{split}
\int_{M}Q_\eps \frac{\left\vert \nabla \omega_\eps \right\vert^2}{\omega_\eps}dv_g  = & \int_{M}Q_\eps  \left(\frac{\left\vert \nabla \Phi_\eps  \right\vert^2}{\omega_\eps} -\omega_\eps \left\vert \nabla \frac{\Phi_\eps}{\omega_\eps} \right\vert^2 \right)dv_g   \\ 
 = & \int_{M} \left( Q_\eps \frac{\left\vert \nabla \Phi_\eps  \right\vert^2}{\omega_\eps} - \lambda_\eps^{\frac{n}{2}} e^{nu_\eps}\frac{\left\vert \Phi_\eps \right\vert^2}{\omega_\eps} \right)dv_g \\
 = & \int_{M}\left(\frac{Q_\eps\left\vert \nabla \Phi_\eps  \right\vert^2}{\omega_\eps} -Q_\eps\left\vert \nabla \Phi_\eps  \right\vert^2 \right) - \lambda_\eps^{\frac{n}{2}} \int_{M} e^{nu_\eps} \left(\frac{\left\vert  \Phi_\eps \right\vert^2}{\omega_\eps}  - \left\vert \Phi_\eps \right\vert^2 \right) \\
 = & \int_{M}\frac{Q_\eps\left\vert \nabla \Phi_\eps  \right\vert^2}{\omega_\eps} \left( 1 - \omega_\eps \right) +  \lambda_\eps^{\frac{n}{2}}\int_{M} e^{nu_\eps}(1 -\omega_{\eps}) \\
 \end{split}
\end{equation*}
noticing the crucial equality $\int_{M} e^{nu_\eps}\omega_\eps^2 = \int_{M} e^{nu_\eps} = 1 $. We know that $ \omega_\eps - 1  \geq \sqrt{1-\delta_{\eps}} - 1$ so that
\begin{equation*} 
\begin{split}
\int_{M}Q_\eps \frac{\left\vert \nabla \omega_\eps \right\vert^2}{\omega_\eps}dv_g
\leq &  \int_{M}Q_\eps \left\vert \nabla \Phi_\eps  \right\vert^2  \frac{1- \sqrt{1-\delta_\eps}}{\sqrt{1-\delta_\eps}} +  \lambda_\eps^{\frac{n}{2}} \left(1-\sqrt{1-\delta_{\eps}} \right) = O\left( \delta_\eps \right)
 \end{split}
\end{equation*}
and we obtain \eqref{eqomegaepsto1dimn}. 
Let's prove \eqref{eqradialreplacementdimn}. We have that
$$ \nabla\left( \Phi_\eps - \frac{\Phi_\eps}{\omega_\eps} \right) = \left(1-\frac{1}{\omega_\eps}\right) \nabla \Phi_\eps + \frac{\nabla \omega_\eps}{\omega_\eps^2} \Phi_\eps$$
and then
$$ \left\vert \nabla\left( \Phi_\eps - \frac{\Phi_\eps}{\omega_\eps}\right) \right\vert^2 = \left\vert \nabla \Phi_\eps  \right\vert^2 \left( 1-\frac{1}{\omega_\eps} \right)^2 +  \frac{\left\vert \nabla \omega_\eps  \right\vert^2}{\omega_\eps^2} + 2 \frac{\left\vert \nabla \omega_\eps  \right\vert^2}{\omega_\eps} \left( 1-\frac{1}{\omega_\eps} \right) $$
so that 
\begin{equation*} 
\int_{M}Q_\eps \left\vert \nabla\left( \Phi_\eps - \frac{\Phi_\eps}{\omega_\eps} \right) \right\vert^2 \leq \int_{M} Q_\eps \left\vert \nabla \Phi_\eps \right\vert_g^{2}  \left( 1-\frac{1}{\omega_\eps} \right)^2  + O\left( \delta_\eps\right)
\end{equation*}
as $\eps\to 0$ and
\begin{equation*} 
\begin{split}
\int_{M} Q_\eps \left\vert \nabla \Phi_\eps  \right\vert^2 & \left( 1-\frac{1}{\omega_\eps} \right)^2 =  - \int_{\Sigma} div\left(  Q_\eps \nabla \Phi_\eps  \left( 1-\frac{1}{\omega_\eps} \right)^2 \right) \Phi_\eps \\
= & \lambda_\eps^{\frac{n}{2}} \int_{M}   \left( 1-\frac{1}{\omega_\eps} \right)^2 \left\vert  \Phi_\eps \right\vert^2e^{nu_\eps} + 2\int_{M}Q_\eps \nabla\left(\frac{1}{\omega_\eps}\right)  \omega_\eps \nabla \omega_\eps \left(1-\frac{1}{\omega_\eps}\right) \\
= &  \lambda_\eps^{\frac{n}{2}} \int_{M} e^{nu_\eps} \left(\omega_\eps^2 + \frac{1}{\omega_\eps} - 2\right)  - 2\int_{M} Q_\eps \frac{\left\vert \nabla \omega_\eps \right\vert^2}{\omega_\eps^2} \left(\omega_\eps-1\right)  \\
= &  \lambda_\eps^{\frac{n}{2}} \int_{M} e^{2u_\eps} \left(\frac{1-\omega_\eps}{\omega_\eps}  \right) dv_g  + O\left( \delta_\eps^{2}\right) = O\left(\delta_\eps\right) \\
 \end{split}
\end{equation*}
as $\eps\to 0$, where we crucially used again $ \int_{\Sigma} e^{2u_\eps}\omega_\eps^2 = \int_{\Sigma} e^{2u_\eps} = 1$. We then obtain \eqref{eqradialreplacementdimn}. Now, let's prove \eqref{eqQepsPhiepsdimn}. We define $A_\eps$ by
$$ A_\eps^2 = Q_\eps^{\frac{2}{n-2}} +\delta_\eps - \left\vert \nabla \Phi_\eps \right\vert^2 $$
$A_\eps$ is a non-negative function and we know that 
$$ \int_M \left\vert \nabla \Phi_\eps \right\vert^2_g Q_\eps dv_g =  \int_M Q_\eps^{\frac{n}{n-2}} dv_g = 1. $$
Therefore multiplying $A_\eps^2$ by $Q_\eps^{\frac{n}{n-2}}$ and integrating, we obtain
$$ \int_M A_\eps^2 Q_\eps dv_g = O\left( \delta_\eps \right) \text{ as }\eps\to 0.$$
Now, we also obtain that
$$ \int_M A_\eps^2 \left( \left\vert \nabla \Phi_\eps \right\vert^2 + A_\eps^2\right)^{\frac{n-2}{2}} = \int_M A_\eps^2\left(Q_\eps^{\frac{2}{n-2}}+ \delta_\eps\right)^{\frac{n-2}{2}} \leq c_n \left( \int_M A_\eps^2 Q_\eps + \delta_\eps^{\frac{n-2}{2}} \int_M A_\eps^2 \right) $$
for a constant $c_n$ depending only on the dimension so that by a minoration of the left-hand term we obtain
$$ \int_M A_\eps^2 \left\vert \nabla \Phi_\eps \right\vert_g^2 + \int_M A_\eps^n = O\left( \delta_\eps + \delta_\eps^{\frac{n-2}{2}} \int_M A_\eps^2\right) $$
and in particular, we deduce
$$ \int_M A_\eps^n = O\left( \delta_\eps\right) $$
Now, writing
$$ Q_\eps^{\frac{q}{n-2}} - \left\vert \nabla \Phi_\eps \right\vert^{\frac{q}{2}}  =  Q_\eps^{\frac{q}{n-2}} - \left(Q_\eps^{\frac{2}{n-2}} + \delta_\eps\right)^{\frac{q}{2}} + \left(A_\eps^2 + \left\vert \nabla \Phi_\eps \right\vert^2\right)^{\frac{q}{2}} - \left\vert \nabla \Phi_\eps \right\vert^{\frac{q}{2}}  $$
we obtain that
$$ \int_M \left(Q_\eps^{\frac{q}{n-2}} - \left\vert \nabla \Phi_\eps \right\vert^{\frac{q}{2}}\right) f_\eps = \| f_{\eps} \|_{L^{\frac{n}{n-q}}} \left(O\left(\delta_\eps\right) + O\left(\delta_\eps^{\frac{2}{n}}\right) \right) $$
which completes the proof of the claim.
\end{proof}

From the previous claim and assumptions on the Palais-Smale sequence, we already deduce a global $W^{1,q}$ convergence of $\frac{\Phi_\eps}{\omega_\eps}$ for any $q< n$.

\begin{cl} \label{clW1qforPhieps} Up to a subsequence, there is a map $\Phi_0 : M \to \mathbb{S}^{\mathbb{N}}$ such that
$$ \left\vert \Phi_0 \right\vert^2 :=  \sum_{i=1}^{+\infty} \left(\phi_0^i\right)^2 =_{a.e} 1  \text{ and } \int_M \left\vert \nabla \Phi_0 \right\vert_g^n dv_g  \leq_{\eps \to 0} \limsup \int_M \left\vert \nabla \frac{\Phi_\eps}{\omega_\eps} \right\vert_g^n dv_g $$
where $\left\vert \nabla \frac{\Phi_\eps}{\omega_\eps}  \right\vert_g^2 :=  \sum_{i=1}^{+\infty} \left\vert \nabla\frac{\phi_\eps^i}{\omega_\eps}\right\vert_g^2$ and such that for any $1\leq p < +\infty$, $1\leq q < n$,
$$ \int_M \left( \left\vert \frac{\Phi_\eps}{\omega_\eps} -\Phi_0  \right\vert^p  + \left\vert \nabla \left(\frac{\Phi_\eps}{\omega_\eps}-\Phi_0 \right) \right\vert_g^{q} \right)dv_g \to 0 $$
as $\eps \to 0$.
\end{cl}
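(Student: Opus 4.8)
The plan is to convert the Palais–Smale system into an \emph{approximate $n$-harmonic map} system and then run an $\eps$-regularity argument, exploiting that the exponent $q<n$ is subcritical so that concentrating energy disappears in $W^{1,q}$. First I would record uniform $W^{1,n}$ bounds: applying \eqref{eqQepsPhiepsdimn} with $q=n$ and $f_\eps\equiv1$, together with the normalization $\lambda_\eps^{-n/2}\int_M Q_\eps^{n/(n-2)}=1$ and the boundedness of $\lambda_\eps$, gives $\int_M|\nabla\Phi_\eps|_g^n\,dv_g=\lambda_\eps^{n/2}+o(1)$. Since $\omega_\eps\ge\sqrt{1-\delta_\eps}$ and $\langle\Phi_\eps,\nabla\Phi_\eps\rangle=\omega_\eps\nabla\omega_\eps$ yield $|\nabla(\Phi_\eps/\omega_\eps)|_g^2=(|\nabla\Phi_\eps|_g^2-|\nabla\omega_\eps|_g^2)/\omega_\eps^2\le|\nabla\Phi_\eps|_g^2/(1-\delta_\eps)$, the maps $\Psi_\eps:=\Phi_\eps/\omega_\eps$ (which take values in the unit sphere of $\ell^2$) are bounded in $W^{1,n}(M,\ell^2)$. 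As in the proof of the preceding Claim, writing $|\nabla\Phi_\eps|_g^2=Q_\eps^{2/(n-2)}+\delta_\eps-A_\eps^2$ with $\int_M A_\eps^n=O(\delta_\eps)$ shows that $Q_\eps$ agrees with $|\nabla\Phi_\eps|_g^{n-2}$ up to errors negligible in the relevant norms; combined with \eqref{eqradialreplacementdimn}, the eigenvalue system rewrites for $\Psi_\eps$ as $-\mathrm{div}_g(|\nabla\Psi_\eps|_g^{n-2}\nabla\Psi_\eps)=|\nabla\Psi_\eps|_g^n\,\Psi_\eps+R_\eps$ with $R_\eps\to0$ in the appropriate dual norm.

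\textbf{Coordinate-wise limit.} Extending $\phi_\eps^i:=0$ for $i>p_\eps$, each $\phi_\eps^i/\omega_\eps$ is bounded in $W^{1,n}(M)$, so a diagonal extraction over $i\in\mathbb{N}$ produces $\phi_\eps^i/\omega_\eps\rightharpoonup\phi_0^i$ weakly in $W^{1,n}$, strongly in every $L^p$ ($p<\infty$) and a.e.; set $\Phi_0=(\phi_0^i)_{i\ge1}$. Truncating to $i\le N$, weak lower semicontinuity of $v\mapsto\int_M|v|^n$ on $W^{1,n}(M,\R^N)$, the inequality $\sum_{i\le N}|\nabla(\phi_\eps^i/\omega_\eps)|^2\le|\nabla\Psi_\eps|^2$, and monotone convergence in $N$ give $\int_M|\nabla\Phi_0|_g^n\le\liminf_\eps\int_M|\nabla\Psi_\eps|_g^n$; likewise $\sum_{i\le N}(\phi_0^i)^2=\lim_\eps\sum_{i\le N}(\phi_\eps^i/\omega_\eps)^2\le1$ a.e., hence $|\Phi_0|^2\le1$ a.e.

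\textbf{Strong convergence and $|\Phi_0|^2=1$.} Let $\mu$ be the weak-$\star$ limit of $|\nabla\Psi_\eps|_g^n\,dv_g$ and let $S$ be the set of atoms of $\mu$ of mass $\ge\eps_0^n$, where $\eps_0$ is the $\eps$-regularity threshold for (approximately) $n$-harmonic maps into spheres — classical, and in the form uniform in the target dimension that is established in Section~\ref{sec3}. By the energy bound $S$ is finite. On balls avoiding $S$ on which $\limsup_\eps\int|\nabla\Psi_\eps|_g^n<\eps_0^n$, the a priori $C^{1,\alpha}$ estimates of Section~\ref{sec3}, which do not depend on the number of target coordinates, bound $\Psi_\eps$ in $C^{1,\alpha}_{\mathrm{loc}}$, so $\Psi_\eps\to\Phi_0$ in $C^1_{\mathrm{loc}}(M\setminus S)$; in particular $|\Phi_0|=1$ on $M\setminus S$, hence a.e. For $s\in S$ and small $\rho$, Hölder's inequality and the $W^{1,n}$ bound give $\int_{B(s,\rho)}|\nabla\Psi_\eps|_g^q\le C\rho^{n-q}$ and $\int_{B(s,\rho)}|\nabla\Phi_0|_g^q\le C\rho^{n-q}$ for $q<n$, uniformly in $\eps$, while $\int_{B(s,\rho)}|\Psi_\eps-\Phi_0|^p\le C\rho^n$ since both maps have modulus $\le1$. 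Covering $S$ by finitely many such balls and using the locally uniform convergence on the complement yields $\int_M\bigl(|\Psi_\eps-\Phi_0|^p+|\nabla(\Psi_\eps-\Phi_0)|_g^q\bigr)\,dv_g\to0$ for all $p<\infty$ and $q<n$.

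\textbf{The main obstacle.} Every step above is routine except one: $p_\eps$ is not a priori bounded — in dimension $n\ge3$ the multiplicity of a Laplace eigenvalue within a conformal class can be arbitrarily large — so the unit-length constraint $|\Psi_\eps|=1$ does not pass to the limit by soft functional analysis, a bounded sequence in $W^{1,n}(M,\ell^2)$ being in general not precompact in $L^2(M,\ell^2)$. What makes the argument work is precisely that the $\eps$-regularity and the a priori $C^{1,\alpha}$ bounds for approximately $n$-harmonic maps can be made \emph{independent of the dimension of the target sphere} (the technical heart of Section~\ref{sec3}, in the spirit of Uhlenbeck): this confines all energy concentration to the finite set $S$, after which subcriticality $q<n$ makes the concentrated part invisible both in $W^{1,q}$ and in $L^p$, closing the proof.
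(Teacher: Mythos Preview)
Your approach differs from the paper's, and there is a genuine gap.

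The paper does \emph{not} invoke any $\eps$-regularity or $C^{1,\alpha}$ estimate on $\Psi_\eps=\Phi_\eps/\omega_\eps$ here. It simply rewrites the eigenvalue system as
\[
-\mathrm{div}_g\big(|\nabla\Psi_\eps|_g^{n-2}\nabla\Psi_\eps\big)=A_\eps+B_\eps,
\qquad A_\eps=\lambda_\eps^{n/2}e^{nu_\eps}\Phi_\eps,
\]
where $A_\eps$ is merely \emph{bounded in $L^1$} and $B_\eps$ (the divergence-form error coming from replacing $Q_\eps\nabla\Phi_\eps$ by $|\nabla\Psi_\eps|^{n-2}\nabla\Psi_\eps$) is shown to satisfy $\|B_\eps\|_{W^{-1,n}}\to0$ using \eqref{eqQepsPhiepsdimn} and \eqref{eqradialreplacementdimn}. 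It then applies the Courilleau-type compactness (Claim~\ref{clW1qconvergence}), whose whole point is that an $L^1$-bounded right-hand side plus a vanishing $W^{-1,n}$ perturbation is enough, via monotonicity of the $n$-Laplace operator and Egoroff, to force a.e.\ convergence of gradients and hence strong $W^{1,q}$ convergence for every $q<n$, as well as strong $L^p$ convergence of $\Psi_\eps$ in the $\ell^2$-valued sense (which is what gives $|\Phi_0|=1$).

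Your route requires two things that are not available at this point. First, you assert $R_\eps\to0$ for the equation $-\mathrm{div}(|\nabla\Psi_\eps|^{n-2}\nabla\Psi_\eps)=|\nabla\Psi_\eps|^n\Psi_\eps+R_\eps$; but $R_\eps$ contains the term $\lambda_\eps^{n/2}e^{nu_\eps}\Phi_\eps-|\nabla\Psi_\eps|^n\Psi_\eps$, and at this stage of the argument the weight $e^{nu_\eps}$ is only an $L^1$ function with \emph{no} pointwise relation to $|\nabla\Psi_\eps|^n$ --- that relation is precisely the conclusion of the whole analysis, proved only later once the limiting equation is identified. So $R_\eps$ is merely bounded in $L^1$, not small. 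Second, the $\eps$-regularity results of Section~\ref{sec3} (Claim~\ref{clindependanttargetmanifoldepsreg}, Proposition~\ref{propc0estnharm}) are for exact $(\tau,n)$-harmonic maps into spheres: the crucial Bochner-type inequality of Claim~\ref{clapproxdeltanharm} uses the specific structure $-\mathrm{div}(P\nabla\Psi)=P|\nabla\Psi|^2\Psi$ and does not survive an additive $L^1$ perturbation. No $C^{1,\alpha}$ (or even $C^0$ gradient) estimate holds in general for $n$-Laplace systems with merely $L^1$ right-hand side, so your local $C^{1,\alpha}_{\mathrm{loc}}$ compactness away from the concentration set is unjustified. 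This is exactly why the paper postpones the use of $\eps$-regularity to Claim~\ref{clmain}, where it is applied not to $\Psi_\eps$ but to its genuine $n$-harmonic \emph{replacement}, and compares the two using Claim~\ref{clW1qforPhieps} as an input.
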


\begin{proof} We use Claim \ref{clW1qconvergence} in the appendix where we extend $\Phi_\eps$ by $\phi_\eps^i = 0$ for $i \geq p_\eps + 2$. We have that
$$ -div_g\left( \left\vert \nabla\frac{ \Phi_\eps}{\omega_\eps} \right\vert^{n-2}\nabla \frac{\Phi_\eps}{\omega_\eps}  \right) = \lambda_\eps^{\frac{n}{2}} e^{nu_\eps} \Phi_\eps - div_g\left(  \left\vert \nabla\frac{ \Phi_\eps}{\omega_\eps} \right\vert^{n-2}\nabla \frac{\Phi_\eps}{\omega_\eps} - Q_\eps \nabla \Phi_\eps \right) $$
We notice that $A_\eps = \lambda_\eps^{\frac{n}{2}} e^{nu_\eps} \Phi_\eps $ satisfies $\left(\left\vert A_\eps \right\vert \right)_\eps $ is bounded in $L^1$ and it remains to prove that
$$ B_\eps = - div_g\left(  \left\vert \nabla\frac{ \Phi_\eps}{\omega_\eps} \right\vert^{n-2}\nabla \frac{\Phi_\eps}{\omega_\eps} - Q_\eps \nabla \Phi_\eps \right) $$
satisfies $\| B_{\eps} \|_{W^{-1,n}} \to 0$ as $\eps \to 0$. Let $\eta : M \to \mathbb{R}^{\mathbb{N}}$ be such that
$$ \int_M  \left( \left\vert \eta  \right\vert^n  + \left\vert \nabla\eta \right\vert_g^{n} \right)dv_g < +\infty. $$
We have by an integration by parts that
\begin{equation*}
\begin{split} \int_M B_\eps . \eta dv_g  = & \int_M \nabla \eta \left( \left\vert \nabla\frac{ \Phi_\eps}{\omega_\eps} \right\vert^{n-2}\nabla \frac{\Phi_\eps}{\omega_\eps} - Q_\eps \nabla \Phi_\eps \right) \\
= & \int_M \left( \left\vert \nabla\frac{ \Phi_\eps}{\omega_\eps} \right\vert^{n-2} -  \left\vert \nabla\Phi_\eps \right\vert^{n-2} \right) \nabla \eta \nabla \frac{\Phi_\eps}{\omega_\eps} + \int_M \left(  \left\vert \nabla\Phi_\eps \right\vert^{n-2} - Q_\eps \right) \nabla \eta \nabla \frac{\Phi_\eps}{\omega_\eps} \\
& + \int_M Q_\eps \nabla \eta .  \nabla\left( \Phi_\eps - \frac{\Phi_\eps}{\omega_\eps}\right) dv_g 
\end{split}
\end{equation*}
The second right-hand term satisfies by \eqref{eqQepsPhiepsdimn} and then a H\"older inequality
$$ \left\vert \int_M \left(  \left\vert \nabla\Phi_\eps \right\vert^{n-2} - Q_\eps \right) \nabla \eta \nabla \frac{\Phi_\eps}{\omega_\eps} \right\vert \leq O\left( \delta_\eps^{\frac{2}{n}} \right) \| \left\vert \nabla \eta \right\vert \|_{L^n} . $$
The third right-hand term satisfies by a H\"older inequality, then \eqref{eqradialreplacementdimn}, and then another H\"older inequality,
$$ \left\vert \int_M Q_\eps \nabla \eta .  \nabla\left( \Phi_\eps - \frac{\Phi_\eps}{\omega_\eps}\right) dv_g \right\vert \leq O\left( \delta_\eps \right) \| \left\vert \nabla \eta \right\vert \|_{L^n} . $$ 
we write the first right-hand term as
$$ \left\vert \int_M \left( \left\vert \nabla\frac{ \Phi_\eps}{\omega_\eps} \right\vert^{n-2} -  \left\vert \nabla\Phi_\eps \right\vert^{n-2} \right) \nabla \eta \nabla \frac{\Phi_\eps}{\omega_\eps} \right\vert \leq c_n  \int_M \left\vert \nabla\left( \Phi_\eps - \frac{\Phi_\eps}{\omega_\eps} \right) \right\vert_g \left\vert \nabla \Phi_\eps \right\vert_g^{n-2} \left\vert \nabla \eta \right\vert_g  $$
for a constant $c_n$ depending only on the dimension $n$ where we used that
$$ \left\vert \nabla \frac{\Phi_\eps}{\omega_\eps} \right\vert_g^2 \leq \frac{\left\vert \nabla \Phi_\eps \right\vert_g^2}{1-\delta_\eps} \leq 2 \left\vert \nabla \Phi_\eps \right\vert_g^2 $$
for $\eps$ small enough. Then, we have
\begin{equation*} 
\begin{split}
\int_M \left\vert \nabla\left( \Phi_\eps - \frac{\Phi_\eps}{\omega_\eps} \right) \right\vert_g \left\vert \nabla \Phi_\eps \right\vert_g^{n-2} \left\vert \nabla \eta \right\vert_g = & \int_M \left\vert \nabla\left( \Phi_\eps - \frac{\Phi_\eps}{\omega_\eps} \right) \right\vert_g \left( \left\vert \nabla \Phi_\eps \right\vert_g^{n-2} - Q_\eps \right) \left\vert \nabla \eta \right\vert_g \\
&+ \int_M \left\vert \nabla\left( \Phi_\eps - \frac{\Phi_\eps}{\omega_\eps} \right) \right\vert_g Q_\eps \left\vert \nabla \eta \right\vert_g 
\end{split}
\end{equation*}
and as before, we apply \eqref{eqQepsPhiepsdimn} and then a H\"older inequality for the first term and H\"older inequalities and \eqref{eqradialreplacementdimn} for the second term to obtain
$$ \left\vert \int_M \left( \left\vert \nabla\frac{ \Phi_\eps}{\omega_\eps} \right\vert^{n-2} -  \left\vert \nabla\Phi_\eps \right\vert^{n-2} \right) \nabla \eta \nabla \frac{\Phi_\eps}{\omega_\eps} \right\vert \leq O\left( \delta_\eps^{\frac{2}{n}} \right) \| \left\vert \nabla \eta \right\vert \|_{L^n}  $$
and gathering all the previous inequalities gives the Claim.
\end{proof}
In the next subsections, we aim at proving that the strong convergence holds in a better space, in order to pass to the limit on the sequence of equations satisfied by $\Phi_\eps$.

\subsection{Regularity of the limiting measures}

We choose to prove in detail the regularity of $\tilde\mu_0$ on $M$, meaning that it is absolutely continous with respect to $dv_g$ and that the density is $\mathcal{C}^{0,\alpha}$ as the density of energy of a $n$-harmonic map. 

The proof of the regularity of the measures $\tilde\mu_i$ in $\mathbb{R}^n$ is similar, because locally, all the regularity estimates we make hold for any rescalings centered at $q_\eps^i$ with scale $\alpha_\eps^i$ of the involved functions. Just notice that since the metric rescales as $g\left(\alpha_\eps^i\left(z - q_\eps^i\right)\right)$, it converges to a Euclidean metric as $\eps\to 0$. Up to a reverse stereographic projection and thanks to the point removability property of $n$-harmonic maps, the regular measures in $\mathbb{R}^n$ give $\mathcal{C}^{0,\alpha}$ conformal factors for the round sphere.

\subsubsection{Selection of bad points}
In the following, we perform local regularity estimates on $(\Phi_\eps)$. These estimates can only be done far from "bad points" we select in Claim \ref{clbadpoints}.
For $\Omega \subset M$ a domain of $M$, we set
$$ \lambda_\star(\Omega, e^{nu_\eps}, Q_\eps ) = \inf_{\varphi \in \mathcal{C}_c^{\infty}\left(\Omega\right) } \frac{\int_\Omega  \left\vert \nabla \varphi \right\vert_g^2 Q_\eps dv_g }{\int_\Omega \varphi^2 e^{nu_\eps}dv_g }. $$
Then, knowing that $\lambda_\eps$ is a $k$-th eigenvalue on $M$ we have:
\begin{cl} \label{clbadpoints} Up to a subsequence, there is $0< r_\star <1$ and a set of at most $k$ bad points $B = \{p_1,\cdots, p_s\} \subset M$ and  such that for any $p \in M\setminus\{p_1,\cdots,p_s\}$ and any $r < \min\left( r_\star, d_g(x,\{p_1,\cdots,p_s\} ) \right) $, then
$$ \lambda_\star\left( \mathbb{B}_r(p), e^{nu_\eps}, Q_\eps \right) \geq \lambda_\eps^{\frac{2}{n}} . $$
\end{cl}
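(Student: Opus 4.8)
The plan is to argue by contradiction, mimicking the classical bad-point extraction used in the two-dimensional theory (see \cite{petrides}, \cite{KS}, \cite{knpp20}), but tracking the weights $Q_\eps$ and $e^{nu_\eps}$ separately. Suppose the conclusion fails: then for every $r_\star \in (0,1)$ and every finite set of at most $k$ candidate points, there is some point $p$ and some radius $r < \min(r_\star, d_g(p,\{p_i\}))$ with $\lambda_\star(\mathbb{B}_r(p), e^{nu_\eps}, Q_\eps) < \lambda_\eps^{2/n}$ along a subsequence. By a covering/Vitali-type argument I would extract, for a sequence $r_\star \to 0$, a maximal family of pairwise disjoint balls $\mathbb{B}_{r_j}(p_j^\eps)$ on each of which the restricted Rayleigh quotient drops below $\lambda_\eps^{2/n}$; disjointness lets me build, for each such ball, a test function $\varphi_j^\eps \in \mathcal{C}_c^\infty(\mathbb{B}_{r_j}(p_j^\eps))$ with $\int |\nabla \varphi_j^\eps|_g^2 Q_\eps\, dv_g < \lambda_\eps^{2/n} \int (\varphi_j^\eps)^2 e^{nu_\eps}\, dv_g$, and these test functions have disjoint supports, hence are linearly independent.

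The key quantitative point is that if one could produce $k+1$ such disjoint balls, the span of the associated $k+1$ test functions would be a $(k{+}1)$-dimensional subspace $E \subset \mathcal{C}_c^\infty(M)$ on which every nonzero element $\varphi = \sum c_j \varphi_j^\eps$ satisfies, by disjointness of supports,
\begin{equation*}
\frac{\int_M |\nabla \varphi|_g^2 Q_\eps\, dv_g}{\int_M \varphi^2 e^{nu_\eps}\, dv_g} = \frac{\sum_j c_j^2 \int |\nabla \varphi_j^\eps|_g^2 Q_\eps}{\sum_j c_j^2 \int (\varphi_j^\eps)^2 e^{nu_\eps}} < \lambda_\eps^{2/n}.
\end{equation*}
By the min-max characterization this forces $\lambda_k(Q_\eps, e^{nu_\eps}) < \lambda_\eps^{2/n}$. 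But $\lambda_\eps = \bar\lambda_k(\alpha_\eps, e^{nu_\eps})$ and, by the normalizations $\int_M Q_\eps^{n/(n-2)}\,dv_g = \int_M e^{nu_\eps}\,dv_g = 1$ together with $Q_\eps = \lambda_\eps^{(2-n)/2}\alpha_\eps$, one checks $\lambda_k(Q_\eps, e^{nu_\eps}) = \lambda_k(\alpha_\eps, e^{nu_\eps})\lambda_\eps^{(n-2)/n} = \lambda_\eps^{2/n}$ (since $\bar\lambda_k(\alpha_\eps,e^{nu_\eps}) = \lambda_k(\alpha_\eps,e^{nu_\eps})$ under these normalizations), a contradiction. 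Hence at most $k$ disjoint such balls exist, and shrinking $r_\star$ along the subsequence and passing to the limit of their centers yields the finite bad set $B = \{p_1, \dots, p_s\}$ with $s \le k$ and a single $r_\star$ that works uniformly; the complement estimate $\lambda_\star(\mathbb{B}_r(p), e^{nu_\eps}, Q_\eps) \ge \lambda_\eps^{2/n}$ for $p \notin B$ and $r < \min(r_\star, d_g(p,B))$ is exactly the negation of being able to add one more bad ball.

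The main obstacle I anticipate is making the diagonal extraction genuinely uniform: a priori the "bad radii" and "bad points" depend on $\eps$, so I must run the disjoint-ball selection at the level of the $\eps$-sequence (not the limit), bound the number of balls by $k$ uniformly in $\eps$, and only then pass to the limit of centers — using that the count is bounded to extract convergent subsequences of the (at most $k$) centers $p_j^\eps$. A secondary technical point is that the test functions must be genuine cutoffs realizing the infimum $\lambda_\star$ up to arbitrarily small error, which is routine since $\lambda_\star$ is defined as an infimum over $\mathcal{C}_c^\infty(\Omega)$; one takes near-minimizers and notes the strict inequality is stable under small perturbation. Everything else — the Rayleigh-quotient computation on disjoint supports and the identity $\lambda_k(Q_\eps,e^{nu_\eps}) = \lambda_\eps^{2/n}$ — is elementary bookkeeping with the scale-invariance of the normalizations.
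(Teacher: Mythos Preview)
Your approach is essentially the paper's: extract disjoint balls on which $\lambda_\star$ drops below the threshold, and use $k{+}1$ such balls together with the min-max characterization of $\lambda_k$ to reach a contradiction. Two remarks are in order.

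\medskip
\textbf{Exponent error.} Your identity $\lambda_k(Q_\eps,e^{nu_\eps})=\lambda_\eps^{2/n}$ is wrong. From the eigenvalue equation $-\,\mathrm{div}_g\bigl(\lambda_\eps^{(2-n)/2}Q_\eps\nabla\Phi_\eps\bigr)=\lambda_\eps e^{nu_\eps}\Phi_\eps$ one reads off $\lambda_k\bigl(\lambda_\eps^{(2-n)/2}Q_\eps,e^{nu_\eps}\bigr)=\lambda_\eps$, hence by the linear scaling $\lambda_k(c\alpha,\beta)=c\,\lambda_k(\alpha,\beta)$ one gets $\lambda_k(Q_\eps,e^{nu_\eps})=\lambda_\eps^{\,n/2}$ (this is also how the paper normalizes in Section~2.6). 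The statement of the claim and the displayed inequality in the paper's own proof carry the same exponent typo; the argument is clean once the threshold is taken to be $\lambda_\eps^{\,n/2}$ throughout. As you have written it, the ``contradiction'' $\lambda_\eps^{2/n}<\lambda_\eps^{2/n}$ rests on a false identity, and with the correct value $\lambda_\eps^{\,n/2}$ versus the stated threshold $\lambda_\eps^{2/n}$ you would need $\lambda_\eps\geq 1$, which is not guaranteed.

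\medskip
\textbf{Construction versus contradiction.} The paper runs the selection constructively and inductively at the level of the $\eps$-sequence: it defines $r_\eps^{j}$ as the infimal radius admitting a new bad ball disjoint from the previous ones, and stops when this infimum no longer tends to $0$. On each selected ball it uses the \emph{actual} first Dirichlet eigenfunction (so $\lambda_\star$ is attained), obtains $\lambda_\eps\leq\lambda_\eps$ from the min-max, and excludes the equality case because a genuine eigenfunction on $M$ cannot vanish on an open set. This inductive construction makes the ``uniform in $\eps$'' extraction you flagged as an obstacle completely explicit. Your strict-inequality variant with near-minimizers is an acceptable substitute once the exponent is fixed.
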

Notice that the atoms of $\mu_0$ belong to this set of points $\{p_1,\cdots,p_s\}$.
\begin{proof}
We set 
$$ r_\eps^1 =  \inf\{ r>0 ; \exists p\in M, \lambda_\star\left( \mathbb{B}_r(p), e^{nu_\eps}, Q_\eps \right) < \lambda_\eps^{\frac{2}{n}} \}. $$
If $r_\eps^1$ does not converge to $0$, then up to a subsequence, there is $r_\star$ such that $r_\eps^1 > r_\star$ and Claim \ref{clbadpoints} is proved for this $r_\star$ and $B= \emptyset$.
If $r_\eps^1 \to 0$, then we let $p_1^\eps$ be such that $ \lambda_\star\left( \mathbb{B}_{r_\eps^1}(p^1_\eps), e^{nu_\eps}, Q_\eps \right) = \lambda_\eps^{\frac{2}{n}} $.
By induction assume that for $j \in \mathbb{N}$ we constructed $r_\eps^1 \leq r_\eps^2 \leq \cdots \leq r_\eps^{j-1}$ such that $r_\eps^{j-1}\to 0$ and points $p^1_\eps,\cdots,p^{j-1}_\eps$ such that 
$$ \forall i_1 \neq i_2, \mathbb{B}_{r^i_\eps}(p_i^\eps) \cap \mathbb{B}_{r^i_\eps}(p^i_\eps) = \emptyset  \text{ and } \forall i, \lambda_\star\left( \mathbb{B}_{r_\eps^i}(p^i_\eps), e^{nu_\eps}, Q_\eps \right) = \lambda_\eps^{\frac{2}{n}}$$
then we set 
$$ r_\eps^j = \inf\{ r>0 ; \exists p\in M,  \forall i, \mathbb{B}_r(p) \cap  \mathbb{B}_{r^i_\eps}(p^i_\eps) = \emptyset  \text{ and } \lambda_\star\left( \mathbb{B}_r(p), e^{nu_\eps}, Q_\eps \right) < \lambda_\eps^{\frac{2}{n}} \} $$
Then if $r_\eps^j $ does not converge to $0$ and up to a subsequence, there is $r_\star$ such that $r_\eps^j > r_\star $ and the proof of Claim  \ref{clbadpoints} is proved for this $r_\star$ and $B= \{p_1,\cdots, p_{j-1}\}$ where up to a subsequence we took $p_1,\cdots, p_{j-1}$ as limits of $p_1^\eps,\cdots, p_{j-1}^\eps$ as $\eps \to 0$. If $r_\eps^j \to 0$, then let $p_j^\eps$ be such that  $ \lambda_\star\left( \mathbb{B}_{r_\eps^j}(p^1_\eps), e^{nu_\eps}, Q_\eps \right) = \lambda_\eps^{\frac{2}{n}} $ and $\mathbb{B}_{r_\eps^j}(p_\eps^j) \cap \mathbb{B}_{r_\eps^j}(p_\eps^i)=\emptyset$ for $i<j$.

This induction process has to stop because if we have we constructed $r_\eps^1 \leq r_\eps^2 \leq \cdots \leq r_\eps^{k+1}$ such that $r_\eps^{k+1}\to 0$ and points $p^1_\eps,\cdots,p^{k+1}_\eps$ such that 
$$ \forall i_1 \neq i_2, \mathbb{B}_{r^i_\eps}(p_i^\eps) \cap \mathbb{B}_{r^i_\eps}(p^i_\eps) = \emptyset  \text{ and } \forall i, \lambda_\star\left( \mathbb{B}_{r_\eps^i}(p^i_\eps), e^{nu_\eps}, Q_\eps \right) = \lambda_\eps^{\frac{2}{n}}$$
then, using the first eigenfunction $\varphi_i$ associated to the eigenvalue $\lambda_\star\left( \mathbb{B}_{r_\eps^i}(p^i_\eps), e^{nu_\eps}, Q_\eps \right)$ extended by $0$ in $M \setminus \mathbb{B}_{r_\eps^i}(p^i_\eps)$, we have by the min-max characterization of the $k$-th eigenvalue on $M$, $\lambda_\eps$ and since $\varphi_i$ are orthogonal functions that
$$ \lambda_\eps \leq \max_{i=1,\cdots, k+1} \frac{\int_M \left\vert \nabla \varphi_i \right\vert^2_g Q_\eps \lambda_\eps^{\frac{n-2}{2}} dv_g}{\int_M \left(\varphi_i\right)^2 e^{nu_\eps} dv_g} \leq \lambda_\eps^{\frac{n}{2}} \lambda_\eps^{\frac{n-2}{2}} = \lambda_\eps . $$
The case of equality in the min-max characterization of $\lambda_\eps$ implies that a linear combination of $\varphi_i$ is an eigenfunction for $\lambda_\eps$ which is impossible since it vanishes on an open set.
\end{proof}

\subsubsection{Non concentration of the energy}
Far from bad points, the energy cannot have concentration points:
\begin{cl}
Let $p\in M \setminus \{p_1,\cdots, p_s\}$, then
\begin{equation} \label{noconcentdimn} \begin{split}
\lim_{r\to 0} \limsup_{\eps\to 0} \int_{\mathbb{B}_r(p)}  e^{nu_{\eps}}   & = \lim_{r\to 0} \limsup_{\eps\to 0}\int_{\mathbb{B}_r(p)} Q_\eps \left\vert \nabla \frac{\Phi_{\eps}}{\omega_{\eps}} \right\vert^2 \\
 & = \lim_{r\to 0} \limsup_{\eps\to 0}\int_{\mathbb{B}_r(p)} Q_\eps \left\vert \nabla \Phi_{\eps} \right\vert^2 =  0 \end{split}
\end{equation}
\end{cl}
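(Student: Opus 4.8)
The plan is to prove the three quantities in \eqref{noconcentdimn} are comparable up to $O(\delta_\eps)$ errors and that the common limit vanishes by exploiting the eigenvalue gap provided by Claim \ref{clbadpoints}. First I would observe that the equality of the three $\limsup$'s (up to lower order terms) follows immediately from the previous claim: by \eqref{eqQepsPhiepsdimn} with $q=2$ and $f_\eps = \mathbf{1}_{\mathbb{B}_r(p)}$, the integrals $\int_{\mathbb{B}_r(p)} Q_\eps^{\frac{2}{n-2}}$ and $\int_{\mathbb{B}_r(p)} |\nabla \Phi_\eps|^2$ differ by $O(\delta_\eps^{2/n})$; since $|\nabla\Phi_\eps|^2 \le Q_\eps^{\frac{2}{n-2}}+\delta_\eps$ and $\int_M Q_\eps^{\frac{n}{n-2}} = \int_M Q_\eps|\nabla\Phi_\eps|^2 = 1$, one also controls $\int_{\mathbb{B}_r(p)} Q_\eps|\nabla\Phi_\eps|^2$ against $\int_{\mathbb{B}_r(p)} Q_\eps^{\frac{n}{n-2}}$. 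Similarly \eqref{eqradialreplacementdimn} lets us replace $\nabla\Phi_\eps$ by $\nabla\frac{\Phi_\eps}{\omega_\eps}$ with an $O(\delta_\eps)$ error, and the eigenvalue equation tested against a cutoff times $\Phi_\eps$ relates $\int_{\mathbb{B}_r(p)} Q_\eps|\nabla\Phi_\eps|^2$ to $\lambda_\eps\int_{\mathbb{B}_r(p)} e^{nu_\eps}|\Phi_\eps|^2 \sim \lambda_\eps\int_{\mathbb{B}_r(p)} e^{nu_\eps}$ since $|\Phi_\eps|^2 \ge 1-\delta_\eps$ and the opposite bound comes from $\int_M e^{nu_\eps}\omega_\eps^2 = 1$. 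So it suffices to bound one of them, say $m(p) := \lim_{r\to 0}\limsup_{\eps\to 0}\int_{\mathbb{B}_r(p)} e^{nu_\eps}$.

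The heart of the argument is the following dichotomy for $m(p)$, using the spectral gap. Fix $p\notin\{p_1,\dots,p_s\}$ and $r<\min(r_\star, d_g(p,\{p_1,\dots,p_s\}))$, so $\lambda_\star(\mathbb{B}_r(p), e^{nu_\eps}, Q_\eps) \ge \lambda_\eps^{2/n}$. Take a cutoff function $\chi$ supported in $\mathbb{B}_r(p)$, equal to $1$ on $\mathbb{B}_{r/2}(p)$, with $|\nabla\chi| \le C/r$. Testing the eigenvalue equation against $\chi^2\Phi_\eps$ and using the usual Bochner-type manipulation gives, roughly,
\begin{equation*}
\lambda_\eps \int_M \chi^2 e^{nu_\eps}|\Phi_\eps|^2 \le \lambda_\eps^{\frac{n-2}{2}}\int_M \chi^2 Q_\eps|\nabla\Phi_\eps|^2 + C\lambda_\eps^{\frac{n-2}{2}}\int_{\mathbb{B}_r(p)} Q_\eps|\nabla\chi|^2 + \text{(cross terms)},
\end{equation*}
but the cleaner route is to apply $\lambda_\star(\mathbb{B}_r(p),e^{nu_\eps},Q_\eps)\ge\lambda_\eps^{2/n}$ directly to the test function $\chi\cdot(\phi_\eps^i/\omega_\eps - \overline{\phi_\eps^i})$ or to $\chi$ times the components, yielding
\begin{equation*}
\lambda_\eps^{\frac{2}{n}} \int_{\mathbb{B}_r(p)} \chi^2 e^{nu_\eps}|\nabla\tfrac{\Phi_\eps}{\omega_\eps}|^{?}\cdots
\end{equation*}
Rather, the standard move: since $|\nabla\Phi_\eps|^2 \le Q_\eps^{\frac2{n-2}}+\delta_\eps$ and $\int_M Q_\eps^{\frac n{n-2}} = 1$, the measure $Q_\eps^{\frac n{n-2}}\,dv_g$ has mass $1$; far from bad points the energy at small scales is not concentrated because any concentration would, via $\lambda_\star$, force an extra low-Rayleigh-quotient test function and hence violate $\lambda_\eps$ being the $k$-th eigenvalue (exactly the mechanism in Claim \ref{clbadpoints}). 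Concretely, I would argue: if $m(p)>0$, then combining $m(p) = \lim\limsup \int_{\mathbb{B}_r} Q_\eps|\nabla\Phi_\eps|^2$ (shown above) with the reverse inequality $\int_{\mathbb{B}_r} Q_\eps|\nabla\Phi_\eps|^2 \ge \lambda_\eps^{2/n}\int_{\mathbb{B}_r}\chi^2 e^{nu_\eps}|\Phi_\eps|^2/(1+o(1)) - C r^{n-2}$ coming from $\lambda_\star$ applied with a cutoff — the boundary term $\int Q_\eps|\nabla\chi|^2 \le \|Q_\eps\|_{L^{n/(n-2)}}^{?} r^{?}\to 0$ as $r\to 0$ by absolute continuity of the bounded mass $Q_\eps^{n/(n-2)}dv_g$ away from atoms — one gets $m(p) \ge \lambda^{2/n} m(p)$ combined with a matching upper estimate; iterating the cutoff at dyadic scales and summing yields $m(p)\big(1 - \text{small}\big) \le \text{small}$, forcing $m(p) = 0$.

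The step I expect to be the main obstacle is controlling the boundary/cross terms $\int_{\mathbb{B}_r(p)} Q_\eps|\nabla\chi|^2$ and the terms coming from differentiating $\omega_\eps$, uniformly in $\eps$ and with the right $r\to 0$ decay. The subtlety is that $Q_\eps$ is only bounded in $L^{n/(n-2)}$, so $\int_{\mathbb{B}_r(p)}Q_\eps|\nabla\chi|^2 \le \|Q_\eps\|_{L^{n/(n-2)}(\mathbb{B}_r(p))}^{} \cdot \big(\int_{\mathbb{B}_r}|\nabla\chi|^n\big)^{2/n} = \|Q_\eps\|_{L^{n/(n-2)}(\mathbb{B}_r(p))}^{}\cdot O(1)$ by the scaling $\int|\nabla\chi|^n \le C r^{-n}\cdot r^n = C$; this does \emph{not} vanish pointwise in $\eps$, so one must first pass to the weak-$\star$ limit $Q_\eps^{n/(n-2)}dv_g \rightharpoonup \rho$ and use that $\rho(\{p\}) = 0$ for $p$ not a bad point (which is where Claim \ref{clbadpoints} and the bubble-tree Claim \ref{clbubbletree} are genuinely used) to make $\|Q_\eps\|_{L^{n/(n-2)}(\mathbb{B}_r(p))} \to \rho(\mathbb{B}_r(p))^{(n-2)/n}$ small as $r\to 0$. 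Handling the $\nabla\omega_\eps$ terms is cleaner, since \eqref{eqomegaepsto1dimn} gives $\int_M Q_\eps|\nabla\omega_\eps|^2/\omega_\eps = O(\delta_\eps)$ globally, so those contributions are $O(\delta_\eps)\to 0$. Once these error terms are under control, the spectral-gap inequality closes the estimate and gives $m(p)=0$, proving the Claim.
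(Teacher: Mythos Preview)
Your proposal has a genuine gap, and it is precisely the obstacle you yourself flag: the control of the boundary term $\int_{\mathbb{B}_r(p)} Q_\eps |\nabla\chi|^2$. With your standard cutoff ($|\nabla\chi|\le C/r$, so $\int|\nabla\chi|^n = O(1)$), H\"older gives only
\[
\int_{\mathbb{B}_r(p)} Q_\eps |\nabla\chi|^2 \;\le\; \Big(\int_{\mathbb{B}_r(p)} Q_\eps^{\frac{n}{n-2}}\Big)^{\frac{n-2}{n}} \cdot O(1),
\]
and you then propose to make the first factor small by appealing to the weak-$\star$ limit $\rho$ of $Q_\eps^{n/(n-2)}dv_g$ having no atom at $p$. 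But this is circular: by \eqref{eqQepsPhiepsdimn} with $q=n$, the quantity $\int_{\mathbb{B}_r(p)} Q_\eps^{n/(n-2)}$ equals $\int_{\mathbb{B}_r(p)} |\nabla\Phi_\eps|^n + O(\delta_\eps^{2/n})$, which is in turn comparable to $\int_{\mathbb{B}_r(p)} Q_\eps|\nabla\Phi_\eps|^2$. So the smallness you want to \emph{use} is exactly one of the three quantities you are trying to \emph{prove} vanish. Neither Claim \ref{clbadpoints} nor Claim \ref{clbubbletree} gives you non-concentration of $Q_\eps^{n/(n-2)}$ at non-bad points for free; that is the content of the present claim.

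The idea you are missing is a \emph{logarithmic} cutoff. Take $\eta\in\mathcal{C}_c^\infty(\mathbb{B}_{\sqrt r}(p))$ with $\eta\equiv 1$ on $\mathbb{B}_r(p)$ and $\int_M |\nabla\eta|_g^n \le C/\ln(1/r)$. Then H\"older and the global normalization of $Q_\eps^{n/(n-2)}$ give $\int Q_\eps|\nabla\eta|^2 \le C\,(\ln(1/r))^{-2/n}$, which \emph{does} vanish as $r\to 0$, with no a priori non-concentration needed. The paper then proceeds directly: apply the $\lambda_\star$ bound of Claim \ref{clbadpoints} to the test function $\eta$ itself to get $\int \eta\, e^{nu_\eps}\le (C/\ln(1/r))^{1/n}$ (via Cauchy--Schwarz and the total mass $1$), and test the eigenvalue equation against $\eta\,\Phi_\eps/\omega_\eps^2$ to control $\int \eta\, Q_\eps|\nabla(\Phi_\eps/\omega_\eps)|^2$ by the same quantity plus $O(\delta_\eps)$ errors from \eqref{eqomegaepsto1dimn}. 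The passage to $Q_\eps|\nabla\Phi_\eps|^2$ then follows from \eqref{eqradialreplacementdimn}, as you noted. Your dichotomy/iteration sketch (``$m(p)\ge \lambda^{2/n} m(p)$'') does not close into a usable inequality and should be replaced by this direct estimate.
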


\begin{proof} Let $\eta \in \mathcal{C}_c^{\infty}(\mathbb{D}_{\sqrt{r}}(p))$ with $0\leq \eta \leq 1$, $\eta = 1$ in $\mathbb{B}_{r}(p)$ and $\int_{M}\left\vert \nabla \eta\right\vert^n_{g} \leq \frac{C}{\ln\left(\frac{1}{r}\right)}$, and we first have using \ref{clbadpoints}
\begin{equation*} \begin{split} \int_{M} \eta e^{nu_{\eps}} \leq \left( \int_{M} \eta^2 e^{nu_{\eps}}\right)^{\frac{1}{2}} & \leq \left(\frac{1}{\lambda_{\star}\left(\mathbb{B}_{\sqrt{r}}(p),e^{nu_{\eps}} , Q_\eps\right)} \int_{\mathbb{B}_{\sqrt{r}}(p)} \left\vert \nabla \eta \right\vert^2 \right)^{\frac{1}{2}} \\
& \leq  \left( \frac{1}{\lambda_{\eps}^{\frac{2}{n}}} \left( \int_{\mathbb{B}_{\sqrt{r}}(p)} \left\vert \nabla \eta \right\vert^n \right)^{\frac{2}{n}} \right)^{\frac{1}{2}}    \leq \left( \frac{C}{ \lambda_\eps \ln\frac{1}{r} }\right)^{\frac{1}{n}}  
\end{split} \end{equation*}
Now, we integrate the equation $- div_g \left(Q_\eps \nabla \Phi_{\eps} \right) = \lambda_{\eps}^{\frac{n}{2}} e^{nu_{\eps}} \Phi_{\eps}$ against $\eta  \frac{\Phi_{\eps}}{\omega_{\eps}^2}$ and we obtain
$$ \int_{M}Q_\eps \eta \left\langle \nabla{ \frac{ \Phi_{\eps}}{\omega_{\eps}^2}},\nabla\Phi_{\eps} \right\rangle + \int_{M}Q_\eps \nabla \eta \nabla{ \Phi_{\eps}} \frac{\Phi_{\eps}}{\omega_\eps^2}  = \lambda_\eps^{\frac{n}{2}} \int_{M} \eta  e^{nu_{\eps}} $$
so that
\begin{equation*}
\begin{split}
  \int_{M} \eta Q_\eps \left\vert \nabla \frac{\Phi_\eps}{\omega_\eps} \right\vert^2  = & \lambda_\eps^{\frac{n}{2}} \int_{M}  \eta e^{nu_{\eps}}  - \int_{M} \eta Q_\eps  \nabla\frac{\Phi_{\eps}}{\omega_{\eps}^2}\nabla \omega_{\eps}  \frac{\Phi_{\eps}}{\omega_{\eps}} \\ 
  & - \int_{M} Q_\eps \nabla \eta \nabla{\Phi_{\eps}} \frac{\Phi_{\eps}}{\omega_\eps^2} \\
  \leq & \left( \frac{C}{ \ln\frac{1}{r}} \right)^{\frac{1}{n}} +  \int_M \eta Q_\eps  \frac{\left\vert \nabla \omega_\eps \right\vert^2 }{\omega_\eps} \\
  & + \left(  \int_{\mathbb{B}_{\sqrt{r}}(p)}  Q_\eps \frac{\left\vert \nabla{\omega_\eps} \right\vert^2}{\omega_\eps^2} \right)^{\frac{1}{2}}  \left(  \int_{M} Q_\eps  \left\vert \nabla\eta \right\vert^2 \right)^{\frac{1}{2}} 
 \end{split}
 \end{equation*}
where we used that $\left\vert \Phi_\eps \right\vert =\omega_\eps$, so that 
$$  \int_{ \mathbb{B}_{\sqrt{r}}(p) } \eta Q_\eps  \left\vert \nabla \frac{\Phi_\eps}{\omega_\eps} \right\vert^2  \leq \left( \frac{C}{\ln\frac{1}{r}} \right)^{\frac{1}{n}} \left(1+O\left(\delta_\eps^{\frac{1}{2}}\right)\right)  +O\left(\delta_{\eps} \right) $$
Now, to conclude, we have by \eqref{eqradialreplacementdimn} that for any $p\in M$,
$$ \int_{ \mathbb{B}_{\sqrt{r}}(p) } Q_\eps \left\vert \nabla \Phi_{\eps} \right\vert^2 - \int_{ \mathbb{B}_r(p) } Q_\eps \left\vert \nabla \frac{\Phi_{\eps}}{\omega_{\eps}} \right\vert^2 \leq O\left( \delta_{\eps}^{\frac{1}{2}} \right) $$
and the proof of the Claim is complete.
\end{proof}

\subsubsection{$W^{1,n}$-convergence of eigenfunctions and $n$-harmonic replacement}

Let $p\in M\setminus\{p_1,\cdots,p_s\}$, and $\rho_\eps > 0$ be such that $\mathbb{B}_{\rho_\eps}(p) \subset M\setminus\{p_1,\cdots,p_s\}$ . We denote by $\Psi_\eps : \mathbb{B}_{\rho_\eps}(p) \to \mathbb{S}^{p_\eps}$ the $n$-harmonic replacement of $\frac{\Phi_\eps}{\omega_\eps}$ on $\mathbb{B}_{\rho_\eps}(p)$ and we set 
$$ P_\eps = \left( \left\vert \nabla \Psi_\eps \right\vert^{2}  \right)^{\frac{n-2}{2}}.$$

\begin{cl} The four following upper estimates occur
\begin{equation} \label{estdiffphioveromegapsidimn} \int_{\mathbb{B}_{\rho_\eps}(p)}  Q_\eps \left\vert \nabla  \frac{\Phi_\eps}{\omega_\eps}  \right\vert^2_g - \int_{\mathbb{B}_{\rho_\eps}(p)}  Q_\eps \left\vert \nabla  \Psi_{\eps} \right\vert^2_g \leq O\left(\delta_\eps^{\frac{1}{2}}\right)  \end{equation}
\begin{equation} \label{estdiffphipsidimn} \int_{\mathbb{B}_{\rho_\eps}(p)}  Q_\eps \left\vert \nabla \Phi_\eps  \right\vert^2_g - \int_{\mathbb{B}_{\rho_\eps}(p)}  Q_\eps \left\vert \nabla  \Psi_{\eps} \right\vert^2_g \leq O\left(\delta_\eps^{\frac{1}{2}}\right)  \end{equation}
\begin{equation} \label{estdiffphipsidimnP} \int_{\mathbb{B}_{\rho_\eps}(p)}  P_\eps \left\vert \nabla \Phi_\eps \right\vert^2_g - \int_{\mathbb{B}_{\rho_\eps}(p)}  P_\eps \left\vert \nabla  \Psi_{\eps} \right\vert^2_g \leq O\left(\delta_\eps^{\frac{1}{2}} + \delta_\eps^{\frac{2}{n}}\right)  \end{equation}
\begin{equation} \label{estdiffphioveromegapsidimnP} \int_{\mathbb{B}_{\rho_\eps}(p)}  P_\eps \left\vert \nabla  \frac{\Phi_\eps}{\omega_\eps}  \right\vert^2_g - \int_{\mathbb{B}_{\rho_\eps}(p)}  P_\eps \left\vert \nabla  \Psi_{\eps} \right\vert^2_g \leq O\left(\delta_\eps^{\frac{1}{2}} + \delta_\eps^{\frac{2}{n}}\right)  \end{equation}
as $\eps \to 0$.
\end{cl}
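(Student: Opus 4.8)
\emph{Approach.} Each of the four inequalities says that, on $\mathbb{B}_{\rho_\eps}(p)$, passing from $\Phi_\eps/\omega_\eps$ to its $n$-harmonic replacement $\Psi_\eps$ costs at most a power of $\delta_\eps$ in the relevant weighted energy, so the only genuinely new inputs are the two defining properties of $\Psi_\eps$: it minimizes $w\mapsto\int_{\mathbb{B}_{\rho_\eps}(p)}|\nabla w|_g^n\,dv_g$ over $W^{1,n}$ maps into $\mathbb{S}^{p_\eps}$ with $w=\Phi_\eps/\omega_\eps$ on $\partial\mathbb{B}_{\rho_\eps}(p)$, and it solves $-div_g\big(|\nabla\Psi_\eps|^{n-2}\nabla\Psi_\eps\big)=|\nabla\Psi_\eps|^n\Psi_\eps$, whence $\int P_\eps|\nabla\Psi_\eps|_g^2\,dv_g=\int|\nabla\Psi_\eps|_g^n\,dv_g$ together with the monotonicity coming from convexity of $\xi\mapsto|\xi|^n$. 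All the remaining work is bookkeeping with the preparatory estimates \eqref{eqomegaepsto1dimn}, \eqref{eqradialreplacementdimn}, \eqref{eqQepsPhiepsdimn}, the defect bound $\|B_\eps\|_{W^{-1,n}}=o(1)$ established inside Claim~\ref{clW1qforPhieps} for $B_\eps=-div_g\big(|\nabla\tfrac{\Phi_\eps}{\omega_\eps}|^{n-2}\nabla\tfrac{\Phi_\eps}{\omega_\eps}-Q_\eps\nabla\Phi_\eps\big)$, the non-concentration \eqref{noconcentdimn} near $p$, and the pointwise bounds of $(PS)$, glued by Young's and H\"older's inequalities. Here $\rho_\eps$ is taken small, so that by \eqref{noconcentdimn} the mass $\int_{\mathbb{B}_{\rho_\eps}(p)}e^{nu_\eps}$ is itself absorbed into the errors.

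\emph{The core step.} The heart of the matter is an almost-minimality of $\Phi_\eps/\omega_\eps$ for the $n$-energy,
\[
0\ \le\ \int_{\mathbb{B}_{\rho_\eps}(p)}\Big(\big|\nabla\tfrac{\Phi_\eps}{\omega_\eps}\big|_g^{\,n}-|\nabla\Psi_\eps|_g^{\,n}\Big)\,dv_g\ =\ O\big(\delta_\eps^{2/n}\big),
\]
the left inequality being minimality. For the right bound I would test the equation $-div_g\big(|\nabla\tfrac{\Phi_\eps}{\omega_\eps}|^{n-2}\nabla\tfrac{\Phi_\eps}{\omega_\eps}\big)=\lambda_\eps^{n/2}e^{nu_\eps}\Phi_\eps+B_\eps$ against the admissible variation $v=\tfrac{\Phi_\eps}{\omega_\eps}-\Psi_\eps\in W^{1,n}_0(\mathbb{B}_{\rho_\eps}(p))$. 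On the left, Young's inequality on the cross term $|\nabla\tfrac{\Phi_\eps}{\omega_\eps}|^{n-2}\nabla\tfrac{\Phi_\eps}{\omega_\eps}\cdot\nabla\Psi_\eps\le\tfrac{n-1}{n}|\nabla\tfrac{\Phi_\eps}{\omega_\eps}|^n+\tfrac1n|\nabla\Psi_\eps|^n$ leaves the lower bound $\tfrac1n\big(\int|\nabla\tfrac{\Phi_\eps}{\omega_\eps}|^n-\int|\nabla\Psi_\eps|^n\big)$; on the right the defect pairs with $v$ to give at most $\|B_\eps\|_{W^{-1,n}}\,\|\nabla v\|_{L^n}=o(1)$ (the $L^n$ norm of $\nabla v$ being bounded by minimality of $\Psi_\eps$), while the remaining term equals $\tfrac12\lambda_\eps^{n/2}\int e^{nu_\eps}\omega_\eps\,|\tfrac{\Phi_\eps}{\omega_\eps}-\Psi_\eps|^2\ge0$ by $|\Phi_\eps/\omega_\eps|=|\Psi_\eps|=1$, and is $\le 2\lambda_\eps^{n/2}\int_{\mathbb{B}_{\rho_\eps}(p)}e^{nu_\eps}\omega_\eps$, controlled by \eqref{noconcentdimn} together with $\int e^{nu_\eps}\omega_\eps^2=1$. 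Granting this, \eqref{estdiffphioveromegapsidimnP} is immediate from $|\nabla\Psi_\eps|^{n-2}|\nabla\tfrac{\Phi_\eps}{\omega_\eps}|^2\le\tfrac{n-2}{n}|\nabla\Psi_\eps|^n+\tfrac2n|\nabla\tfrac{\Phi_\eps}{\omega_\eps}|^n$ plus $\int P_\eps|\nabla\Psi_\eps|^2=\int|\nabla\Psi_\eps|^n$; and \eqref{estdiffphipsidimnP} follows from \eqref{estdiffphioveromegapsidimnP} once $\int P_\eps\big(|\nabla\Phi_\eps|^2-|\nabla\tfrac{\Phi_\eps}{\omega_\eps}|^2\big)=\int P_\eps|\nabla\omega_\eps|^2+\int P_\eps(\omega_\eps^2-1)|\nabla\tfrac{\Phi_\eps}{\omega_\eps}|^2$ (orthogonal splitting $|\nabla\Phi_\eps|^2=|\nabla\omega_\eps|^2+\omega_\eps^2|\nabla\tfrac{\Phi_\eps}{\omega_\eps}|^2$) is estimated by a H\"older inequality with exponents $(\tfrac n{n-2},\tfrac n2)$ against $P_\eps$, using \eqref{eqomegaepsto1dimn}, $(PS)$ and $\omega_\eps\ge\sqrt{1-\delta_\eps}$.

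\emph{The $Q_\eps$-weighted estimates and the main obstacle.} For \eqref{estdiffphioveromegapsidimn} and \eqref{estdiffphipsidimn} the claimed error $O(\delta_\eps^{1/2})$ is, when $n\ge4$, sharper than $O(\delta_\eps^{2/n})$, so one cannot afford to swap the weight $Q_\eps$ for $|\nabla\Phi_\eps|^{n-2}$ via \eqref{eqQepsPhiepsdimn}; instead I would test the $(PS)$ equation $-div_g(Q_\eps\nabla\Phi_\eps)=\lambda_\eps^{n/2}e^{nu_\eps}\Phi_\eps$ against the variation $\Phi_\eps-\omega_\eps\Psi_\eps$, which vanishes on $\partial\mathbb{B}_{\rho_\eps}(p)$ since $\Phi_\eps=\omega_\eps\Psi_\eps$ there, and combine the monotonicity of $\Psi_\eps$ with \eqref{eqradialreplacementdimn} to obtain $\int_{\mathbb{B}_{\rho_\eps}(p)}Q_\eps|\nabla\Phi_\eps|^2\le\int_{\mathbb{B}_{\rho_\eps}(p)}Q_\eps|\nabla\Psi_\eps|^2+O(\delta_\eps^{1/2})$, i.e. \eqref{estdiffphipsidimn}; then \eqref{estdiffphioveromegapsidimn} follows from it and \eqref{eqradialreplacementdimn} by expanding $|\nabla\tfrac{\Phi_\eps}{\omega_\eps}|^2-|\nabla\Phi_\eps|^2$ and applying Cauchy--Schwarz with weight $Q_\eps$. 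The recurring obstacle, and the step I expect to be hardest, is precisely that $Q_\eps$ and $P_\eps$ lie only in $L^{n/(n-2)}$ with no lower bound, so every passage between the $Q_\eps$-weighted, unweighted and $P_\eps$-weighted $n$-energies must be funnelled through \eqref{eqomegaepsto1dimn}--\eqref{eqQepsPhiepsdimn}; concretely one must control $\Phi_\eps-\Phi_\eps/\omega_\eps$ — equivalently $\omega_\eps-1$ — in the relevant $L^n$-type norms starting only from the integral bounds \eqref{eqomegaepsto1dimn}, \eqref{eqradialreplacementdimn} and the one-sided pointwise bound $\omega_\eps\ge\sqrt{1-\delta_\eps}$, and one must absorb the remaining term $\lambda_\eps^{n/2}\int e^{nu_\eps}\omega_\eps\,|\tfrac{\Phi_\eps}{\omega_\eps}-\Psi_\eps|^2$ using nothing but the non-concentration of $e^{nu_\eps}$.
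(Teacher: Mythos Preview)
Your sketch contains a genuine gap in the handling of the term
\[
\lambda_\eps^{\,n/2}\int_{\mathbb{B}_{\rho_\eps}(p)} e^{nu_\eps}\,\omega_\eps\,\Big|\tfrac{\Phi_\eps}{\omega_\eps}-\Psi_\eps\Big|^2\,dv_g,
\]
which appears on the right-hand side every time you test either the $(PS)$ equation or the approximate $n$-harmonic equation against $\tfrac{\Phi_\eps}{\omega_\eps}-\Psi_\eps$ (or $\Phi_\eps-\omega_\eps\Psi_\eps$). You propose to bound it crudely by $2\lambda_\eps^{\,n/2}\int_{\mathbb{B}_{\rho_\eps}(p)}e^{nu_\eps}\omega_\eps$ and then invoke the non-concentration \eqref{noconcentdimn}. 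But \eqref{noconcentdimn} says only that this mass is small when the \emph{radius} is small; it gives no rate in $\delta_\eps$, and in the application (Claim~\ref{clmain}) one has $\rho_\eps\in(\rho/2,\rho)$ for a \emph{fixed} $\rho$, so the term does not even tend to zero as $\eps\to 0$. Your added hypothesis ``$\rho_\eps$ is taken small'' is not in the statement and would break the later use of the claim. The paper instead controls this term by the Dirichlet eigenvalue lower bound from Claim~\ref{clbadpoints}: since $\lambda_\star(\mathbb{B}_{\rho_\eps}(p),e^{nu_\eps},Q_\eps)\ge \lambda_\eps^{2/n}$, one has the weighted Poincar\'e inequality
\[
\lambda_\eps^{\,n/2}\int e^{nu_\eps}\Big|\tfrac{\Phi_\eps}{\omega_\eps}-\Psi_\eps\Big|^2
\ \le\ \int Q_\eps\,\Big|\nabla\big(\tfrac{\Phi_\eps}{\omega_\eps}-\Psi_\eps\big)\Big|^2,
\]
and this right-hand side is exactly the quadratic form that, when combined with the identity coming from testing the $(PS)$ equation against $\tfrac{\Phi_\eps}{\omega_\eps^2}-\tfrac{\Psi_\eps}{\omega_\eps}$, \emph{absorbs} into the left and yields $\int Q_\eps|\nabla\tfrac{\Phi_\eps}{\omega_\eps}|^2-\int Q_\eps|\nabla\Psi_\eps|^2\le O(\delta_\eps^{1/2})$ directly. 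Without this spectral input your argument cannot close.

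A second issue: for \eqref{estdiffphipsidimn} you appeal to ``the monotonicity of $\Psi_\eps$'', but $\Psi_\eps$ minimises the \emph{unweighted} $n$-energy, not the $Q_\eps$-weighted Dirichlet energy, so no inequality of the form $\int Q_\eps|\nabla\Psi_\eps|^2\le\int Q_\eps|\nabla\tfrac{\Phi_\eps}{\omega_\eps}|^2$ is available. The paper's route is the reverse of yours: prove \eqref{estdiffphioveromegapsidimn} first via the spectral gap, deduce \eqref{estdiffphipsidimn} from it and \eqref{eqradialreplacementdimn}, and then obtain the $P_\eps$-versions \eqref{estdiffphipsidimnP}--\eqref{estdiffphioveromegapsidimnP} from the $Q_\eps$-versions by writing $P_\eps-Q_\eps=(|\nabla\Psi_\eps|^{n-2}-|\nabla\Phi_\eps|^{n-2})+(|\nabla\Phi_\eps|^{n-2}-Q_\eps)$, noting that the first bracket paired with $|\nabla\Phi_\eps|^2-|\nabla\Psi_\eps|^2$ has a sign, and controlling the second via \eqref{eqQepsPhiepsdimn}. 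Your Young-inequality route to the $P_\eps$-estimates would work \emph{after} the $L^2(e^{nu_\eps})$ term is handled, but not before.
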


\begin{proof}
We first prove \eqref{estdiffphioveromegapsidimn}
We test the function $\frac{\Phi_{\eps}^i}{\omega_{\eps}}- \Psi_{\eps}^i$ in the variational characterization of $\lambda_{\star}:= \lambda_{\star}\left(\mathbb{B}_{\rho_\eps}(p),e^{nu_{\eps}} , \frac{Q_\eps}{\lambda_\eps^{\frac{n-2}{2}}} \right)$:
$$ \lambda_{\eps}^{\frac{n}{2}} \int_{ \mathbb{B}_{\rho_\eps}(p) } \left(\frac{\Phi_{\eps}^i}{\omega_{\eps}}- \Psi_{\eps}^i\right)^2e^{nu_{\eps}}  \leq \frac{\lambda_{\eps}}{\lambda_\star} \int_{ \mathbb{B}_{\rho_\eps}(p) } Q_\eps \left\vert \nabla\left(\frac{\Phi_{\eps}^i}{\omega_{\eps}}- \Psi_{\eps}^i\right)\right\vert^2$$
and we sum on $i$ to get thanks to Claim \ref{clbadpoints}
\begin{equation} \label{eqtestlambdastardimn} \begin{split} \lambda_{\eps}^{\frac{n}{2}} & \int_{ \mathbb{B}_{\rho_\eps}(p) } \left\vert \frac{\Phi_{\eps}}{\omega_{\eps}}- \Psi_{\eps}\right\vert^2e^{nu_{\eps}} \\ \leq &  \int_{ \mathbb{B}_{\rho_\eps}(p) } Q_\eps \left\vert \nabla\frac{\Phi_{\eps}}{\omega_{\eps}}\right\vert^2 +  \int_{ \mathbb{B}_{\rho_\eps}(p) }Q_\eps\left\vert \nabla \Psi_{\eps} \right\vert^2 - 2\int_{ \mathbb{B}_{\rho_\eps}(p) }Q_\eps \nabla\frac{\Phi_{\eps}}{\omega_{\eps}}\nabla \Psi_{\eps} \end{split} \end{equation}
Now we test the equation $ - div_g\left(\lambda_{\eps}^{\frac{2-n}{2}} Q_\eps \Phi_{\eps} \right) =  \lambda_{\eps} e^{nu_{\eps}} \Phi_{\eps} $ against $\frac{\Phi_{\eps}}{\omega_{\eps}^2} - \frac{\Psi_{\eps}}{\omega_{\eps}}$ and we get after an integration by part knowing that $\frac{\Phi_{\eps}}{\omega_{\eps}^2} - \frac{\Psi_{\eps}}{\omega_{\eps}} = 0$ on $\partial \mathbb{B}_r(p)$
\begin{equation} \begin{split} \int_{\mathbb{B}_{\rho_\eps}(p)} &Q_\eps \left(\frac{1}{\omega_{\eps}}\nabla \Phi_{\eps} \nabla\left(\frac{\Phi_{\eps}}{\omega_{\eps}} - \Psi_{\eps}\right) + \nabla\frac{1}{\omega_{\eps}}\nabla \Phi_{\eps}  \left(\frac{\Phi_{\eps}}{\omega_{\eps}} - \Psi_{\eps}\right) \right) \\
= & \lambda_\eps^{\frac{n}{2}} \int_{\mathbb{B}_{\rho_\eps}(p)} \left\langle \frac{\Phi_{\eps}}{\omega_{\eps}}, \frac{\Phi_{\eps}}{\omega_{\eps}} - \Psi_{\eps} \right\rangle e^{nu_{\eps}} \end{split} \end{equation}
so that,
\begin{equation} \label{eqtestagainstequationdimn}
\begin{split}
 \int_{\mathbb{B}_{\rho_\eps}(p)} & Q_\eps \nabla \frac{\Phi_{\eps}}{\omega_{\eps}} \nabla\left(\frac{\Phi_{\eps}}{\omega_{\eps}} - \Psi_{\eps}\right) = \lambda_\eps^{\frac{n}{2}} \int_{\mathbb{B}_{\rho_\eps}(p)} \left\langle \frac{\Phi_{\eps}}{\omega_{\eps}}, \frac{\Phi_{\eps}}{\omega_{\eps}} - \Psi_{\eps} \right\rangle e^{nu_{\eps}} \\ & -  \int_{\mathbb{B}_{\rho_\eps}(p)} Q_\eps \nabla\frac{1}{\omega_{\eps}}\nabla \Phi_{\eps}  \left(\frac{\Phi_{\eps}}{\omega_{\eps}} - \Psi_{\eps}\right) 
+ \int_{\mathbb{B}_{\rho_\eps}(p)} Q_\eps \Phi_{\eps}  \nabla \frac{1}{\omega_{\eps}} \nabla\left(\frac{\Phi_{\eps}}{\omega_{\eps}} - \Psi_{\eps}\right) 
\end{split}
 \end{equation}
 Knowing that $\left\vert \frac{\Phi_{\eps}}{\omega_{\eps}} \right\vert = \left\vert \Psi_\eps \right\vert$, it is clear that $2\left\langle \frac{\Phi_{\eps}}{\omega_{\eps}}, \frac{\Phi_{\eps}}{\omega_{\eps}} - \Psi_{\eps} \right\rangle = \left\vert \frac{\Phi_{\eps}}{\omega_{\eps}}- \Psi_{\eps}\right\vert^2$ and multiplying \eqref{eqtestagainstequationdimn} by $2$, we obtain
\begin{equation}  \label{eqtestagainstequation2dimn}
\begin{split}
& 2 \int_{\mathbb{B}_{\rho_\eps}(p)} Q_\eps \left\vert \nabla \frac{\Phi_{\eps}}{\omega_{\eps}}\right\vert^2 - 2 \int_{\mathbb{B}_{\rho_\eps}(p)} Q_\eps \nabla \frac{\Phi_{\eps}}{\omega_{\eps}} \nabla \Psi_{\eps}  = \lambda_\eps^{\frac{n}{2}} \int_{\mathbb{B}_{\rho_\eps}(p)} \left\vert \frac{\Phi_{\eps}}{\omega_{\eps}}- \Psi_{\eps}\right\vert^2 e^{nu_{\eps}} 
\\ & + 2 \int_{\mathbb{B}_{\rho_\eps}(p)} Q_\eps \frac{\nabla\omega_{\eps}}{\omega_{\eps}} \left( \frac{\nabla \Phi_{\eps}}{\omega_{\eps}}  \left(\frac{\Phi_{\eps}}{\omega_{\eps}} - \Psi_{\eps}\right) - \frac{\Phi_{\eps}}{\omega_{\eps}} \nabla\left(\frac{\Phi_{\eps}}{\omega_{\eps}} - \Psi_{\eps} \right) \right) \\
 \leq & \lambda_\eps^{\frac{n}{2}} \int_{\mathbb{B}_{\rho_\eps}(p)} \left\vert \frac{\Phi_{\eps}}{\omega_{\eps}}- \Psi_{\eps}\right\vert^2 e^{nu_{\eps}} + O\left( \delta_{\eps}^{\frac{1}{2}}\right)
\end{split}
  \end{equation}
Summing \eqref{eqtestlambdastardimn} and \eqref{eqtestagainstequation2dimn}, we get \eqref{estdiffphioveromegapsidimn}. Then, \eqref{estdiffphipsidimn} easily follows from \eqref{estdiffphioveromegapsidimn} and \eqref{eqradialreplacementdimn} by a triangle inequality. Now let's prove \eqref{estdiffphipsidimnP}. We have that
\begin{equation*} \begin{split} \int_{\mathbb{B}_{\rho_\eps}(p)}  P_\eps \left\vert \nabla \Phi_\eps \right\vert^2_g - \int_{\mathbb{B}_{\rho_\eps}(p)} & P_\eps  \left\vert \nabla \Psi_\eps \right\vert_g^{2} =    \int_{\mathbb{B}_{\rho_\eps}(p)}  Q_\eps \left\vert \nabla \Phi_\eps \right\vert^2_g - \int_{\mathbb{B}_{\rho_\eps}(p)} Q_\eps  \left\vert \nabla \Psi_\eps \right\vert_g^{2} \\
&  + \int_{\mathbb{B}_{\rho_\eps}(p)} \left(\left\vert \nabla \Psi_\eps\right\vert^{n-2} - \left\vert \nabla \Phi_\eps\right\vert^{n-2}\right) \left( \left\vert \nabla \Phi_\eps \right\vert^2_g - \left\vert \nabla\Psi_\eps \right\vert^2_g \right) \\
&  + \int_{\mathbb{B}_{\rho_\eps}(p)} \left(\left\vert \nabla \Phi_\eps\right\vert^{n-2} - Q_\eps \right) \left( \left\vert \nabla \Phi_\eps \right\vert^2_g - \left\vert \nabla\Psi_\eps \right\vert^2_g \right) \\
\leq & O\left( \delta_\eps^{\frac{1}{2}}\right) + O\left( \delta_\eps^{\frac{2}{n}}  \right)
\end{split} \end{equation*}
where we used \eqref{estdiffphipsidimn} and \eqref{eqQepsPhiepsdimn}. Finally, we prove \eqref{estdiffphioveromegapsidimnP}. We have by \eqref{estdiffphipsidimnP} that
\begin{equation*} \begin{split} \int_{\mathbb{B}_{\rho_\eps}(p)}  P_\eps \left\vert \nabla \frac{\Phi_{\eps}}{\omega_{\eps}} \right\vert^2_g - \int_{\mathbb{B}_{\rho_\eps}(p)} P_\eps  \left\vert \nabla \Psi_\eps \right\vert_g^{2} \leq &  O\left(\delta_\eps^{\frac{1}{2}} +  \delta_\eps^{\frac{2}{n}}  \right) + \int_{\mathbb{B}_{\rho_\eps}(p)} P_\eps \left( \left\vert \nabla \frac{\Phi_{\eps}}{\omega_{\eps}} \right\vert^2_g - \left\vert \nabla\Phi_\eps \right\vert^2_g \right) \\
\leq & O\left( \delta_\eps^{\frac{1}{2}} +  \delta_\eps^{\frac{2}{n}}\right) + \int_{\mathbb{B}_{\rho_\eps}(p)} P_\eps \left(1-\omega_\eps^2\right) \left\vert \nabla \frac{\Phi_{\eps}}{\omega_{\eps}} \right\vert^2_g \\
\leq & O\left( \delta_\eps^{\frac{1}{2}} +  \delta_\eps^{\frac{2}{n}} + \delta_\eps\right)
\end{split} \end{equation*}
since $\omega_\eps^2 \geq 1-\delta_\eps$ and we obtain \eqref{estdiffphioveromegapsidimnP}. The proof is complete
\end{proof}

\begin{cl} \label{clmain} There is $\gamma_{n,g} >0$ such that if
$$\int_{\mathbb{B}_{\rho}(p)} Q_{\eps}^{\frac{n}{n-2}}dv_g \leq \gamma_{n,g} $$
then, there is $\frac{\rho}{2} <\rho_\eps < \rho $ such that the harmonic replacement $\Psi_{\eps} : \mathbb{B}_{\rho_\eps}(p) \to \mathbb{S}^{p_\eps}$ of $\frac{\Phi_\eps}{\omega_\eps}$ satisfies for any $r < \frac{{\rho}}{4}$:
\begin{equation} \label{mainestdimn} \int_{\mathbb{B}_r(p)}  \left(P_\eps + Q_\eps\right) \left\vert \nabla \left( \Phi_\eps - \Psi_{\eps}\right) \right\vert^2_g \to 0  \end{equation}
as $\eps \to 0$.

\end{cl}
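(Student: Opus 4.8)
The plan is to deduce \eqref{mainestdimn} from the four energy comparisons \eqref{estdiffphioveromegapsidimn}--\eqref{estdiffphioveromegapsidimnP} and from the almost-minimality of $\Phi_\eps$ they encode, using the uniform convexity of the $n$-Dirichlet energy. First, combining \eqref{eqomegaepsto1dimn}, \eqref{eqradialreplacementdimn} and \eqref{eqQepsPhiepsdimn} gives $\int_{\mathbb{B}_\rho(p)}\lvert\nabla(\Phi_\eps/\omega_\eps)\rvert_g^n\,dv_g\le\int_{\mathbb{B}_\rho(p)}Q_\eps^{\frac{n}{n-2}}\,dv_g+o(1)\le\gamma_{n,g}+o(1)$, so a mean-value (Fubini) argument on the annulus $\mathbb{B}_\rho(p)\setminus\mathbb{B}_{\rho/2}(p)$ produces $\rho_\eps\in(\rho/2,\rho)$ on whose geodesic sphere the trace of $\Phi_\eps/\omega_\eps$ has tangential $n$-energy $O(\gamma_{n,g}/\rho)$; hence the $\mathbb{S}^{p_\eps}$-valued $n$-harmonic replacement $\Psi_\eps$ is well defined, satisfies the Euler--Lagrange equation $-\mathrm{div}_g(\lvert\nabla\Psi_\eps\rvert_g^{n-2}\nabla\Psi_\eps)=\lvert\nabla\Psi_\eps\rvert_g^n\Psi_\eps$, and, by minimality, $\int_{\mathbb{B}_{\rho_\eps}(p)}\lvert\nabla\Psi_\eps\rvert_g^n\le\int_{\mathbb{B}_{\rho_\eps}(p)}\lvert\nabla(\Phi_\eps/\omega_\eps)\rvert_g^n\le\gamma_{n,g}+o(1)$.

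Since the integrand in \eqref{mainestdimn} is non-negative and $\mathbb{B}_r(p)\subset\mathbb{B}_{\rho_\eps}(p)$ for $r<\rho/4$, it suffices to bound the integral over $\mathbb{B}_{\rho_\eps}(p)$. Expanding the square and using $(\star)$, the sum of \eqref{estdiffphipsidimn} and \eqref{estdiffphipsidimnP},
\[
\int_{\mathbb{B}_{\rho_\eps}(p)}(P_\eps+Q_\eps)\lvert\nabla(\Phi_\eps-\Psi_\eps)\rvert_g^2\ \le\ -2\int_{\mathbb{B}_{\rho_\eps}(p)}(P_\eps+Q_\eps)\,\nabla\Psi_\eps\cdot\nabla(\Phi_\eps-\Psi_\eps)\ +\ o(1).
\]
In the right-hand side I would first replace $\Phi_\eps$ by $\Phi_\eps/\omega_\eps$ at cost $o(1)$ using \eqref{eqradialreplacementdimn}, \eqref{eqomegaepsto1dimn} and H\"older; then the $P_\eps$-part becomes $\int\lvert\nabla\Psi_\eps\rvert_g^{n-2}\nabla\Psi_\eps\cdot\nabla(\Phi_\eps/\omega_\eps-\Psi_\eps)$, which by the equation for $\Psi_\eps$ and $\lvert\Psi_\eps\rvert=\lvert\Phi_\eps/\omega_\eps\rvert=1$ equals $-\tfrac12\int\lvert\nabla\Psi_\eps\rvert_g^n\lvert\Phi_\eps/\omega_\eps-\Psi_\eps\rvert^2$, while the $Q_\eps$-part is, up to $o(1)$ by \eqref{eqQepsPhiepsdimn}, $\int\lvert\nabla\Phi_\eps\rvert_g^{n-2}\nabla\Psi_\eps\cdot\nabla(\Phi_\eps/\omega_\eps-\Psi_\eps)$, to be compared to the $P_\eps$-part via $\bigl|\lvert a\rvert^{n-2}-\lvert b\rvert^{n-2}\bigr|\le c_n(\lvert a\rvert+\lvert b\rvert)^{n-3}\lvert a-b\rvert$ together with the $W^{1,q}$ and energy controls already in hand. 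After this bookkeeping one is left with
\[
\int_{\mathbb{B}_{\rho_\eps}(p)}(P_\eps+Q_\eps)\lvert\nabla(\Phi_\eps-\Psi_\eps)\rvert_g^2\ \le\ C\,D_\eps\ +\ o(1),\qquad D_\eps:=\int_{\mathbb{B}_{\rho_\eps}(p)}\lvert\nabla\Psi_\eps\rvert_g^n\,\lvert\tfrac{\Phi_\eps}{\omega_\eps}-\Psi_\eps\rvert^2.
\]

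It then remains to prove $D_\eps\to0$. Here I would exploit that $\Psi_\eps$ is a minimizing $n$-harmonic map with small energy, so the regularity theory of Section~\ref{sec3} applies: a reverse-H\"older (Gehring) inequality makes $\lvert\nabla\Psi_\eps\rvert_g^n$ bounded in $L^{1+\delta}_{loc}(\mathbb{B}_{\rho_\eps}(p))$, and the $\eps$-regularity gives strong $W^{1,n}_{loc}$ convergence $\Psi_\eps\to\Psi_0$ to a minimizing $n$-harmonic map. Combined with the strong $L^p$ convergence $\Phi_\eps/\omega_\eps\to\Phi_0$ from Claim~\ref{clW1qforPhieps} and the fact that $\Phi_\eps/\omega_\eps$ and $\Psi_\eps$ share the same trace on $\partial\mathbb{B}_{\rho_\eps}(p)$, one identifies $\Psi_0=\Phi_0$ on $\mathbb{B}_\rho(p)$ (uniqueness of small-energy minimizing $n$-harmonic maps with prescribed boundary data), whence $D_\eps\to\int\lvert\nabla\Psi_0\rvert_g^n\lvert\Phi_0-\Psi_0\rvert^2=0$, which together with the previous displays yields \eqref{mainestdimn}.

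I expect the main obstacle to be precisely the twofold interaction that was absent for $n=2$: controlling the defect $D_\eps$ — equivalently, showing the $n$-harmonic replacement does not drift away from $\Phi_\eps/\omega_\eps$ — which forces the use of the small-energy regularity and strong-convergence theory for minimizing $n$-harmonic maps (and is one reason the paper must develop that theory, even for possibly infinite-dimensional targets); and the systematic replacement of the degenerate weights $Q_\eps$ and $P_\eps=\lvert\nabla\Psi_\eps\rvert_g^{n-2}$ by $\lvert\nabla\Phi_\eps\rvert_g^{n-2}$, legitimate only in the integrated, H\"older-tested form \eqref{eqQepsPhiepsdimn}, so that every error term has to be tracked carefully to remain $o(1)$. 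Everything else reduces to convexity of $t\mapsto\lvert t\rvert^n$ and to the comparisons \eqref{estdiffphioveromegapsidimn}--\eqref{estdiffphioveromegapsidimnP}.
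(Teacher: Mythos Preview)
Your reduction to the defect $D_\eps=\int\lvert\nabla\Psi_\eps\rvert_g^{n}\lvert\Phi_\eps/\omega_\eps-\Psi_\eps\rvert^2$ is essentially the paper's Step~5 target, but your argument for $D_\eps\to 0$ has a genuine gap. You invoke ``uniqueness of small-energy minimizing $n$-harmonic maps with prescribed boundary data'' to conclude $\Phi_0=\Psi_0$; this would require $\Phi_0$ to be itself an $n$-harmonic minimizer with the same trace, and at this point of the argument \emph{nothing of the sort is known}. The only information on $\Phi_0$ is the $W^{1,q}$ convergence of Claim~\ref{clW1qforPhieps} for $q<n$; its $n$-harmonicity is precisely what Claim~\ref{clmain} is meant to make accessible (it is only established afterwards, once the limiting equation is identified). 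Sharing traces with $\Psi_\eps$ and having $L^p$ limits does not by itself force $\Phi_0$ to be the minimizer, and your $W^{1,n}_{loc}$ convergence for $\Psi_\eps$ is not enough to run a global energy-comparison argument on the moving domains $\mathbb{B}_{\rho_\eps}(p)$ either.

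The paper supplies exactly the missing idea. The radius $\rho_\eps$ is chosen via Courant--Lebesgue (not Fubini) so that, after a rotation, the boundary trace satisfies $\phi_\eps^1/\omega_\eps\geq 3/4$; the $\eps$-regularity of Claim~\ref{clindependanttargetmanifoldepsreg} then propagates this inward to $\psi_\eps^1\geq 1/2$ on all of $\mathbb{B}_{\rho_\eps}(p)$ (Step~1). This makes $\nabla\ln\psi_\eps^1$ meaningful and yields the exact identity
\[
\sum_i\int P_\eps\Bigl|\nabla\bigl(\tfrac{\phi_\eps^i}{\omega_\eps}-\psi_\eps^i\bigr)-\nabla(\ln\psi_\eps^1)\bigl(\tfrac{\phi_\eps^i}{\omega_\eps}-\psi_\eps^i\bigr)\Bigr|^2
=\int P_\eps\Bigl(\bigl|\nabla\tfrac{\Phi_\eps}{\omega_\eps}\bigr|^2-\lvert\nabla\Psi_\eps\rvert^2\Bigr)\le o(1),
\]
which passes to the limit to give $\nabla(\Phi_0-\Psi_0)=\nabla(\ln\psi_0^1)(\Phi_0-\Psi_0)$ outside the zero set of $\lvert\nabla\Phi_0\rvert+\lvert\nabla\Psi_0\rvert$. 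A Hardy inequality with the interior gradient bound $\lvert\nabla\ln\psi_0^1\rvert\le C\gamma_{n,g}^{1/n}/(\rho_0-\lvert\cdot\rvert)$ then forces $\Phi_0=\Psi_0$ directly, with no appeal to $n$-harmonicity of $\Phi_0$. A secondary point: your ``comparison of the $Q_\eps$-part to the $P_\eps$-part via $\bigl|\lvert a\rvert^{n-2}-\lvert b\rvert^{n-2}\bigr|\le c_n(\lvert a\rvert+\lvert b\rvert)^{n-3}\lvert a-b\rvert$'' also needs the quantitative control $\int(\lvert\nabla\Phi_\eps\rvert^{n-2}+\lvert\nabla\Psi_\eps\rvert^{n-2})(\lvert\nabla\Phi_\eps\rvert-\lvert\nabla\Psi_\eps\rvert)^2\to 0$, which is the paper's Step~3 and comes from the sign $(\lvert a\rvert^{n-2}-\lvert b\rvert^{n-2})(\lvert a\rvert^2-\lvert b\rvert^2)\ge 0$ combined with \eqref{estdiffphipsidimn}--\eqref{estdiffphipsidimnP}; the ``$W^{1,q}$ controls already in hand'' (for $q<n$) do not supply it.
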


\begin{proof} 
\textbf{STEP 1:} Up to reduce $\gamma_{n,g}>0$, there is $\frac{\rho}{2}<\rho_\eps < \rho$ such that up to a rotation coordinates of $\Psi_\eps$, we have that the first coordinate of the $n$-harmonic replacement $\Psi_\eps : \mathbb{B}_{\rho_\eps}(p) \to \mathbb{S}^{p_\eps}$ is uniformly lower bounded
\begin{equation} \label{eqphiepsuniformlylowerbounded}
\forall x \in \mathbb{B}^{n}_{\rho_\eps}(p) , \psi_\eps^{1}(x ) \geq \frac{1}{2}. \end{equation}

\textbf{Proof of Step 1:} We apply the Courant-Lebesgue lemma to obtain $\frac{\rho}{2} < \rho_\eps < \rho$ such that
\begin{equation*} 
\begin{split} \int_{\mathbb{S}^{n-1}}\left( \sum_{i=1}^{p_\eps+1}\left\vert \partial_\theta \psi_\eps^{i} \left( \rho_\eps . \theta \right) \right\vert^{2} \right)^{\frac{n}{2}} d\theta_{\mathbb{S}^n-1} \leq & \frac{1}{\ln 2} \int_{\mathbb{B}_{\rho}(p)} \left( \sum_{i=1}^{p_\eps+1} \left\vert \nabla \psi_\eps^{i} \left( y \right) \right\vert^{2} \right)^{\frac{2}{n}} dy \\
\leq & \frac{C_g}{\ln 2} \left( \int_{\mathbb{B}_{\rho}(p)} \left\vert \nabla \Psi_\eps \right\vert^{n}_g dv_g\right)^{\frac{2}{n}} 
\end{split}
\end{equation*}
for a constant $C_g$ only depending on $g$. Now, for a given $1\leq i_0\leq p_\eps+1$, we have by classical Morrey-Sobolev injections a constant $a_n$ such that for any $z,z' \in \mathbb{S}^{n-1}$
\begin{equation*} 
\begin{split}
\left\vert \phi_\eps^{i_0}(\rho_\eps z+p) - \phi_\eps^{i_0}(\rho_\eps z'+p) \right\vert \leq & a_n \left( \int_{\mathbb{S}^{n-1}}\left\vert \partial_\theta \phi_\eps^{i_0} \left( \rho_\eps\theta \right) \right\vert^{n}  d\theta_{\mathbb{S}^n-1} \right)^{\frac{1}{n}}  d_{\mathbb{S}^n-1}(z,z')^{\frac{1}{n}} \\
\leq & a_n 2^{\frac{1}{n}} \frac{C_g}{\ln 2} \left( \int_{\mathbb{B}_{\rho}(p)} \left\vert \nabla \Phi_\eps \right\vert^{n}_g dv_g\right)^{\frac{1}{n}} \\
\leq & \frac{1}{8}
\end{split}
 \end{equation*}
 where we choose $a_n 2^{\frac{1}{n}} \frac{C_g}{\ln 2} \gamma_{n,g}^{\frac{1}{n}} \leq \frac{1}{8}$. Now, let $z_0^\eps \in  \mathbb{S}^{n-1}$ be such that 
 $$\omega_\eps(\rho_\eps z_0^\eps + p) = \max_{z\in  \mathbb{S}^{n-1}} \omega_\eps(\rho_\eps z + p)$$
 Then, up to a rotation of $\Phi_\eps$, we assume that $\phi_\eps^{1}(\rho_\eps z_0^\eps+p) = \omega_\eps(\rho_\eps z_0^\eps+p)$ and $\phi_\eps^{i}(\rho_\eps z_0+p) = 0$ for $i \neq 1$. Knowing in addition that $\omega_\eps^2 \geq 1-\delta_\eps$, this implies that 
\begin{equation} \label{eqlowerboundpsieps1boundary}
\forall z \in \mathbb{S}^{n-1} , \frac{\phi_{\eps}^{1}(\rho_\eps z+p)}{\omega_\eps(\rho_\eps z+p)} \geq \frac{\phi_\eps^{1}(\rho_\eps z_0^\eps+p)-\frac{1}{8 }}{\omega_\eps(\rho_\eps z_0^\eps+p)} \geq 1 - \frac{1}{8\sqrt{1-\delta_\eps}} \geq \frac{3}{4} . \end{equation}
Now, let's focus on the $n$-harmonic extension $\Psi_\eps$ of $\frac{\Phi_\eps}{\omega_\eps}$ on $\mathbb{B}_{\rho_\eps}(p)$. We define the following extension of $\Psi_\eps$ in $\mathbb{R}^n$ by setting $\widetilde{\Psi_\eps}(\rho_\eps z) = \Psi_\eps\left(\rho_\eps \frac{z}{\left\vert z \right\vert^2}\right)$ if $\left\vert z \right\vert\geq 1$ so that we have by conformal invariance of the $n$-energy
$$ \int_{\mathbb{R}^n} \left\vert \nabla \widetilde{\Psi_\eps} \right\vert^n_g dv_g \leq  2c_g  \int_{\mathbb{B}_{\rho_\eps}(p)} \left\vert \nabla \Psi_\eps \right\vert^n_gdv_g \leq 2 c_g \gamma_{n,g} $$
Now let $x_\eps$ be such that $\psi_\eps^1(x_\eps) = \min_{x\in \overline{\mathbb{B}_{\rho_\eps}(p)}} \psi_\eps^1(x)$, we aim at proving that $\psi_\eps^1(x_\eps) \geq \frac{1}{2}$. We assume that $\left\vert x_\eps \right\vert < \rho_\eps $ since otherwise, \eqref{eqlowerboundpsieps1boundary} proves step 1. We set $d_\eps = \rho_\eps - \left\vert x_\eps \right\vert$ and $y_\eps = \rho_\eps \frac{x_\eps}{\left\vert x_\eps \right\vert}$. By application of the Courant-Lebesgue theorem and Morrey-Sobolev embeddings again (see the beginning of Step 1), we have existence of a radius $\frac{d_\eps}{2} < R_\eps < d_\eps $ such that
$$ \sup_{w,w' \in  \mathbb{S}_{R_\eps}^{n-1}(y_\eps)} \left\vert \psi_\eps^{1}(w) - \psi_\eps^{1}(w') \right\vert \leq a_n 2^{\frac{1}{n}}\frac{C_g}{\ln 2}  \left(\int_{\mathbb{B}_{\rho_\eps}(p)} \left\vert \nabla \widetilde{\Psi_\eps} \right\vert^n_gdv_g\right)^{\frac{1}{n}}  \leq \frac{1}{8} $$
if we assume $a_n 2^{\frac{1}{n}}\frac{C_g}{\ln 2} 2 c_g \gamma_{n,g}^{\frac{1}{n}} \leq \frac{1}{8}$. Now the $\eps$-regularity result independant from the dimension of the target manifold (see Claim \ref{clindependanttargetmanifoldepsreg}) gives that
$$ \sup_{w \in \mathbb{B}^n_{\frac{d_\eps}{2}}(x_\eps)} \left\vert \psi_\eps^1(w) - \psi_\eps^1(x_\eps) \right\vert \leq C_{n,g}^{\frac{1}{n}} \left(\int_{\mathbb{B}^n_{d_\eps}(x_\eps)} \left\vert \nabla \Psi_\eps \right\vert^n_g\right)^{\frac{1}{n}}  \leq \frac{1}{8}$$
if we assume that $C_{n,g}^{\frac{1}{n}} \gamma_{n,g}^{\frac{1}{n}} \leq \frac{1}{8}$. Then, gathering all the previous inequalities and noticing that $\mathbb{B}^n_{\frac{d_\eps}{2}}(x_\eps) \cap \mathbb{S}_{R_\eps}(y_\eps) \neq \emptyset$ and that $\mathbb{S}_{\rho_\eps}(p) \cap  \mathbb{S}_{R_\eps}(y_\eps) \neq \emptyset$ we obtain the desired lower bound.

\medskip

\textbf{STEP 2:} We prove that
\begin{equation} \label{eqstep2convergesto0} 
\begin{split} \sum_i \int_{\mathbb{B}_{\rho_\eps}(p)} P_\eps \left\vert  \nabla \left(\frac{\phi_\eps^i}{\omega_\eps}-\psi_\eps^i\right) - \nabla\left( \ln \psi_\eps^{1} \right) \left(\frac{\phi_\eps^i}{\omega_\eps}-\psi_\eps^i\right) \right\vert^2_g dv_g \\ = \int_{\mathbb{B}_{\rho_\eps}(p)} P_\eps\left( \left\vert \nabla \frac{\Phi_\eps}{\omega_\eps} \right\vert^2 - \left\vert \nabla \Psi_\eps \right\vert^2 \right) \leq O\left(\delta_\eps^{\frac{1}{2}}+ \delta_\eps^{\frac{2}{n}}\right)
\end{split}\end{equation}

\textbf{Proof of Step 2:}

We recall that $\Psi_{\eps}:\mathbb{B}_{\rho_\eps}(p) \to \mathbb{S}^{p_\eps} $ is the $n$-harmonic replacement of $\frac{\Phi_{\eps}}{\omega_{\eps}}$ and we obtain
\begin{equation*} 
\begin{split}
\int_{\mathbb{B}_{\rho_\eps}(p)}  P_\eps \left\vert \nabla \left(\frac{\Phi_{\eps}}{\omega_{\eps}} - \Psi_{\eps}\right) \right\vert^2_g  & - \left(\int_{\mathbb{B}_{\rho_\eps}(p)} P_\eps \left\vert \nabla \frac{\Phi_{\eps}}{\omega_{\eps}} \right\vert^2_g - \int_{\mathbb{B}_{\rho_\eps}(p)} P_\eps \left\vert \nabla \Psi_{\eps} \right\vert^2_g \right)  \\ = & 2 \int_{\mathbb{B}_{\rho_\eps}(p)} P_\eps \left\langle \nabla \Psi_\eps, \nabla \left(\Psi_\eps - \frac{\Phi_{\eps}}{\omega_{\eps}}\right) \right\rangle_g \\
= &    2 \int_{\mathbb{B}_{\rho_\eps}(p)}  -div_g\left(P_\eps \nabla \Psi_\eps\right) \left(\Psi_\eps - \frac{\Phi_{\eps}}{\omega_{\eps}}\right)  \\
= &  2 \int_{\mathbb{B}_{\rho_\eps}(p)}  P_\eps \left\vert \nabla \Psi_\eps \right\vert_g^2 \Psi_\eps \left(\Psi_\eps - \frac{\Phi_{\eps}}{\omega_{\eps}}\right) \\
= &   \int_{\mathbb{B}_{\rho_\eps}(p)}  P_\eps \left\vert \nabla \Psi_\eps \right\vert_g^2  \left\vert\Psi_\eps - \frac{\Phi_{\eps}}{\omega_{\eps}}\right\vert^2
 \end{split}
\end{equation*}
Now, for a given $u \in \mathcal{C}^{\infty}\left(\mathbb{B}_{\rho_\eps}(p)\right) \cap \mathcal{C}^{0}\left(\overline{\mathbb{B}_{\rho_\eps}(p)}\right)$, such that $u=0$ we have the following computation:
\begin{equation*} 
\begin{split}
 \int_{\mathbb{B}_{\rho_\eps}(p)} P_\eps \left\vert  \nabla u - \nabla\left( \ln \psi_\eps^{1} \right) u \right\vert^2_g dv_g  = & \int_{\mathbb{B}_r(p)} P_\eps \left\vert \nabla u \right\vert_g^2 + \int_{\mathbb{B}_{\rho_\eps}(p)} P_\eps \frac{\left\vert \nabla \psi_\eps^{1} \right\vert_g^2}{\left(\psi_\eps^{1}\right)^2} u^2 \\
 & - \int_{\mathbb{B}_{\rho_\eps}(p)} P_\eps \frac{\nabla \psi_\eps^{1}}{\psi_\eps^{1}} \nabla u^2 \\
  = & \int_{\mathbb{B}_{\rho_\eps}(p)} P_\eps \left\vert \nabla u \right\vert_g^2 - \int_{\mathbb{B}_{\rho_\eps}(p)} \frac{-div_g\left( P_\eps \nabla \psi_\eps^{1} \right)}{\psi_\eps^{1}}  u^2 \\
  = & \int_{\mathbb{B}_{\rho_\eps}(p)} P_\eps \left\vert \nabla u \right\vert_g^2 - \int_{\mathbb{B}_{\rho_\eps}(p)} P_\eps \left\vert \nabla \Psi_{\eps} \right\vert_g^2  u^2
 \end{split}
\end{equation*}
so that testing this for $u= \frac{\phi_\eps^i}{\omega_\eps}-\psi_\eps^i$ and summing over $i$, we obtain
\begin{equation*} 
\begin{split}
 \sum_i \int_{\mathbb{B}_{\rho_\eps}(p)} P_\eps \left\vert  \nabla \left(\frac{\phi_\eps^i}{\omega_\eps}-\psi_\eps^i\right) - \nabla\left( \ln \psi_\eps^{1} \right) \left(\frac{\phi_\eps^i}{\omega_\eps}-\psi_\eps^i\right) \right\vert^2_g dv_g \\
 = \int_{\mathbb{B}_{\rho_\eps}(p)} P_\eps \left\vert \nabla\left( \frac{\Phi_\eps}{\omega_\eps}-\Psi_\eps \right) \right\vert_g^2 - \int_{\mathbb{B}_{\rho_\eps}(p)} P_\eps \left\vert \nabla \Psi_{\eps} \right\vert_g^2  \left\vert\frac{\Phi_\eps}{\omega_\eps}-\Psi_\eps\right\vert^2.
  \end{split}
\end{equation*}
Finally, using the previous computations and \eqref{estdiffphioveromegapsidimnP} we obtain \eqref{eqstep2convergesto0}

\medskip

\textbf{STEP 3:} We prove that
\begin{equation} \label{eqstep3convergesto0Peps}  
\int_{\mathbb{B}_{\rho_\eps}(p)}   \left( \left\vert \nabla \Psi_\eps\right\vert^{n-2} + \left\vert \nabla \Phi_\eps\right\vert^{n-2} \right)  \left\vert\nabla \left( \frac{\Phi_\eps}{\omega_\eps}   - \Phi_\eps \right)\right\vert^2 = O\left(\delta_\eps^{\frac{2}{n}} \right) \text{ as } \eps\to 0
 \end{equation}
and that
\begin{equation} \label{eqstep3convergesto0}  
\int_{\mathbb{B}_{\rho_\eps}(p)} \left( \left\vert \nabla \Psi_\eps\right\vert^{n-2} + \left\vert \nabla \Phi_\eps\right\vert^{n-2} \right) \left( \left\vert\nabla \Phi_\eps \right\vert  - \left\vert \nabla \Psi_\eps\right\vert\right)^2 =  O\left(\delta_\eps^{\frac{1}{2}} + \delta_\eps^{\frac{2}{n}} \right) \text{ as } \eps\to 0
 \end{equation}

\textbf{Proof of Step 3:} By the same computations as for the proof of \eqref{eqradialreplacementdimn}, we have
$$ \left\vert \nabla \left(\Phi_\eps - \frac{\Phi_{\eps}}{\omega_{\eps}}\right) \right\vert^2 \leq \frac{ \left\vert \nabla\omega_\eps \right\vert^2  }{\omega_\eps^2} +  \left\vert \nabla \Phi_\eps \right\vert^2 \leq 2 \left\vert \nabla \Phi_\eps \right\vert^2 \leq 2 Q_\eps^{\frac{2}{n-2}} $$
so that writing $2 = \frac{2(n-2)}{n} + \frac{4}{n} $ 
\begin{equation*} \begin{split} \int_{\mathbb{B}_r(p)} & \left( \left\vert \nabla \Psi_\eps\right\vert^{n-2} + \left\vert \nabla \Phi_\eps\right\vert^{n-2} \right) \left\vert \nabla \left(\Phi_\eps - \frac{\Phi_{\eps}}{\omega_{\eps}}\right) \right\vert^2_g \\  
\leq & 2^{\frac{2(n-2)}{n}}  \int_{\mathbb{B}_r(p)} \left( \left\vert \nabla \Psi_\eps\right\vert^{n-2} + \left\vert \nabla \Phi_\eps\right\vert^{n-2} \right) Q_\eps^{\frac{2}{n}} \left\vert \nabla \left(\Phi_\eps - \frac{\Phi_{\eps}}{\omega_{\eps}}\right) \right\vert^{\frac{4}{n}}_g \\
 \leq &  2^{\frac{2(n-2)}{n}} \left(\int_{\mathbb{B}_r(p)} \left( \left\vert \nabla \Psi_\eps\right\vert^{n-2} + \left\vert \nabla \Phi_\eps\right\vert^{n-2} \right)^{\frac{n}{n-2}} \right)^{\frac{n-2}{n}} \left(\int_{\mathbb{B}_r(p)} Q_\eps \left\vert \nabla \left(\Phi_\eps - \frac{\Phi_{\eps}}{\omega_{\eps}}\right) \right\vert^2_g\right)^{\frac{2}{n}} \\
 \leq & O\left( \delta_\eps^{\frac{2}{n}}\right)
 \end{split}  \end{equation*}
where we used \eqref{eqradialreplacementdimn} for the last inequality, and we obtain
\eqref{eqstep3convergesto0Peps}  .

Now let's prove \eqref{eqstep3convergesto0}. By \eqref{estdiffphioveromegapsidimn}, we have that
\begin{equation*} \begin{split} O\left(\delta_\eps^{\frac{1}{2}}\right)& \geq  \int_{\mathbb{B}_{\rho_\eps}(p)} Q_\eps\left( \left\vert \nabla \Phi_\eps \right\vert^2 - \left\vert \nabla \Psi_\eps \right\vert^2 \right) \\
& \geq \int_{\mathbb{B}_{\rho_\eps}(p)} P_\eps\left( \left\vert \nabla \Phi_\eps \right\vert^2 - \left\vert \nabla \Psi_\eps \right\vert^2 \right) + \int_{\mathbb{B}_{\rho_\eps}(p)} \left(Q_\eps-P_\eps\right)\left( \left\vert \nabla\Phi_\eps \right\vert^2 - \left\vert \nabla \Psi_\eps \right\vert^2 \right) \\
& \geq O\left(\delta_\eps^{\frac{1}{2}}+ \delta_\eps^{\frac{2}{n}}\right) + \int_{\mathbb{B}_{\rho_\eps}(p)} \left(Q_\eps-P_\eps\right)\left( \left\vert \nabla \Phi_\eps \right\vert^2 - \left\vert \nabla \Psi_\eps \right\vert^2 \right)
\end{split}
 \end{equation*}
where we used for the last inequality \eqref{eqstep2convergesto0}, proved in step 2 and \eqref{eqstep3convergesto0Peps}. Now,
\begin{equation*} \begin{split} \int_{\mathbb{B}_{\rho_\eps}(p)}  \left(Q_\eps-P_\eps\right) &  \left( \left\vert \nabla \Phi_\eps \right\vert^2 - \left\vert \nabla \Psi_\eps \right\vert^2 \right) \geq  \int_{\mathbb{B}_{\rho_\eps}(p)} \left(Q_\eps-\left\vert \nabla \Phi_\eps \right\vert^{n-2}\right)\left( \left\vert \nabla \Phi_\eps \right\vert^2 - \left\vert \nabla \Psi_\eps \right\vert^2 \right) \\
&+  \int_{\mathbb{B}_{\rho_\eps}(p)} \left( \left\vert \nabla \Phi_\eps \right\vert^{n-2} - \left\vert \nabla \Psi_\eps \right\vert^{n-2} \right)\left( \left\vert \nabla \Phi_\eps \right\vert^2 - \left\vert \nabla \Psi_\eps \right\vert^2 \right) \\
\geq & O\left(\delta_\eps^{\frac{2}{n}}\right) + \int_{\mathbb{B}_{\rho_\eps}(p)} \left( \left\vert \nabla \Phi_\eps \right\vert - \left\vert \nabla \Psi_\eps \right\vert \right)^2  \left( \left\vert \nabla \Phi_\eps \right\vert^{n-2} + \left\vert \nabla \Psi_\eps \right\vert^{n-2} \right)
\end{split}
 \end{equation*}
 where we used \eqref{eqQepsPhiepsdimn} again for the last inequality and we obtain  \eqref{eqstep3convergesto0} by gathering the previous inequalities.

\medskip

\textbf{STEP 4:} We prove that
\begin{equation} \label{eqstep4convergesto0}  \sum_i \int_{\mathbb{B}_{\rho_\eps}(p)} \left\vert \nabla\Phi_\eps \right\vert_g^{n-2} \left\vert  \nabla \left(\frac{\phi_\eps^i}{\omega_\eps}-\psi_\eps^i\right) - \nabla\left( \ln \psi_\eps^{1} \right) \left(\frac{\phi_\eps^i}{\omega_\eps}-\psi_\eps^i\right) \right\vert^2_g dv_g \to 0 \text{ as } \eps\to 0 \end{equation}

\textbf{Proof of Step 4:} \eqref{eqstep4convergesto0} follows from  \eqref{eqstep2convergesto0} if we prove that
$$ \sum_i \int_{\mathbb{B}_{\rho_\eps}(p)}\left( \left\vert \nabla\Phi_\eps \right\vert_g^{n-2} -  \left\vert \nabla\Psi_\eps \right\vert_g^{n-2} \right)\left\vert  \nabla \left(\frac{\phi_\eps^i}{\omega_\eps}-\psi_\eps^i\right) - \nabla\left( \ln \psi_\eps^{1} \right) \left(\frac{\phi_\eps^i}{\omega_\eps}-\psi_\eps^i\right) \right\vert^2_g dv_g \to 0 $$
as $\eps \to 0$. We denote $D_\eps$ this quantity. Using \eqref{eqstep3convergesto0} in the last inequality,  we have that 
\begin{equation*} 
\begin{split}D_\eps \leq & c_n \int_{\mathbb{B}_{\rho_\eps}(p)}\left\vert \left\vert \nabla\Phi_\eps \right\vert -  \left\vert \nabla\Psi_\eps \right\vert \right\vert \left(  \left\vert \nabla\Phi_\eps \right\vert_g^{n-3} +  \left\vert \nabla\Psi_\eps \right\vert^{n-3} \right)\left(  2 \left\vert \nabla \left( \frac{\Phi_\eps}{\omega_\eps} - \Psi_\eps \right) \right\vert^2 + 8 \left\vert \nabla \Psi_\eps \right\vert^2 \right) \\
\leq & O\left( \left(\int_{\mathbb{B}_{\rho_\eps}(p)} \left( \left\vert \nabla \Phi_\eps \right\vert - \left\vert \nabla \Psi_\eps \right\vert \right)^2  \left( \left\vert \nabla \Phi_\eps \right\vert^{n-2} + \left\vert \nabla \Psi_\eps \right\vert^{n-2} \right)\right)^{\frac{1}{2}} \right) \to 0 \text{ as } \eps\to 0
\end{split}\end{equation*}

\medskip

\textbf{STEP 5:} We prove that for any $r < \frac{\rho}{4}$
\begin{equation} \label{eqestPepsphioveromegadimn} \int_{\mathbb{B}_{r}(p)}  P_\eps \left\vert \nabla \left( \frac{\Phi_\eps}{\omega_\eps} - \Psi_{\eps}\right) \right\vert^2_g \to 0 \text{ as } \eps\to 0
\end{equation}

\textbf{Proof of Step 5:} We set $\rho_{0} = \lim_{\eps\to 0} \rho_\eps$.  From \eqref{eqstep2convergesto0}, we have that for any $\mathbb{B}_r(p) \subset \mathbb{B}_{\rho_\eps}(p)$ with $r < \rho_0$,
\begin{equation} \label{eqineqforanysubsetA}
\begin{split} \int_{\mathbb{B}_r(p)} P_{\eps}\left\vert \nabla\left(\frac{ \Phi_\eps}{\omega_\eps}-\Psi_\eps\right) \right\vert^2 \leq & 2 \int_{A} P_{\eps} \left\vert \nabla\left( \ln\psi_\eps^{1} \right) \right\vert^2  \left\vert \frac{ \Phi_\eps}{\omega_\eps}-\Psi_\eps\right\vert^2 \\
& +  2 \sum_i \int_{\mathbb{B}_r(p)} P_\eps \left\vert  \nabla \left(\frac{\phi_\eps^i}{\omega_\eps}-\psi_\eps^i\right) - \nabla\left( \ln \psi_\eps^{1} \right) \left(\frac{\phi_\eps^i}{\omega_\eps}-\psi_\eps^i\right) \right\vert^2_g dv_g  \\
\leq &2 \int_{\mathbb{B}_r(p)} P_{\eps} \left\vert \nabla\left( \ln\psi_\eps^{1} \right) \right\vert^2  \left\vert \frac{ \Phi_\eps}{\omega_\eps}-\Psi_\eps\right\vert^2 + O\left(\delta_\eps^{\frac{1}{2}}\right) \\
  \end{split}
\end{equation}
We aim at proving that the right-hand term converges to $0$ as $\eps\to 0$. 

By Claim \eqref{clW1qforPhieps} applied to $\frac{\Phi_\eps}{\omega_\eps}$ and Claim \eqref{clW1qconvergence} applied to $\Psi_\eps$, we know that up to a subsequence, the sequence of maps $\frac{\Phi_\eps}{\omega_\eps} : \mathbb{B}_{\rho_\eps}(p) \to \mathbb{S}^{p_\eps}$ and $\Psi_\eps :  \mathbb{B}_{\rho_\eps}(p) \to \mathbb{S}^{p_\eps}$ converge to some maps $\Phi_0 : \mathbb{B}_{\rho_0}(p) \to \mathbb{R}^{\mathbb{N}}$ and $\Psi_0 : \mathbb{B}_{\rho_0}(p) \to \mathbb{R}^{\mathbb{N}}$ in $W^{1,q}$ for any $1 \leq q < n$, 
and $\Psi_0$ is a harmonic function into the unit sphere of $\mathbb{R}^N$. 
Moreover, we know from \eqref{eqstep2convergesto0} and \eqref{eqstep4convergesto0} that 
$$ \left(\left\vert \nabla \Psi_\eps \right\vert_g^{\frac{n-2}{2}} +  \left\vert \nabla \frac{\Phi_\eps}{\omega_\eps} \right\vert_g^{\frac{n-2}{2}}\right) \left( \nabla \Psi_\eps - \nabla\left( \frac{\Phi_\eps}{\omega_\eps}\right) - \left\vert \nabla \ln \left( \psi_\eps^1\right)  \right\vert_g^2 \left( \Psi_\eps- \frac{\Phi_\eps}{\omega_\eps}\right) \right) \to 0 $$
strongly in $L^2$ as $\eps\to 0$. Passing to the limit as $\eps\to 0$, we obtain
$$ \left( \left\vert \nabla \Psi_0 \right\vert^{\frac{n-2}{2}} + \left\vert \nabla \Phi_0 \right\vert^{\frac{n-2}{2}} \right)\left( \nabla \Psi_0 - \nabla \Phi_0 -  \nabla \ln \left( \psi_0^1\right)  \left( \Psi_0 - \Phi_0\right) \right) = 0$$
If we define 
$$ Z = \{ z\in \mathbb{B}_{\rho_0}(p) ; \left\vert \nabla \Psi_0 \right\vert^{\frac{n-2}{2}} + \left\vert \nabla \Phi_0 \right\vert^{\frac{n-2}{2}} = 0  \}, $$
We obtain that
$$ \forall z \in \mathbb{B}_{\rho_0}(p) \setminus Z , \left( \nabla \Psi_0 - \nabla \Phi_0 - \nabla \ln \left( \psi_0^1\right)  \left( \Psi_0 - \Phi_0\right)\right)(z) = 0 $$
It is clear that $\Psi_0 - \Phi_0 = 0$ in $\partial\mathbb{B}_{\rho_0} $ as limits of functions such that $\Psi_\eps - \frac{\Phi_\eps}{\omega_\eps} = 0$ in  $\partial\mathbb{B}_{\rho_\eps}$. Now by Claim \ref{clindependanttargetmanifoldepsreg} and Step 1, we know that
$$\forall z \in \mathbb{B}_{\rho_0}(p), \left\vert \nabla \ln\psi_0^1 \right\vert^2(z) \leq 4 \frac{C_{n,g}^{\frac{2}{n}} \gamma_{n,g}^{\frac{2}{n}}}{\left(1-\left\vert z \right\vert\right)^2}  $$
Therefore, we have by a classical Hardy inequality that
\begin{equation*}
\begin{split} \int_{\mathbb{B}_{\rho_0}(p) } \left\vert \nabla\left( \Phi_0 - \Psi_0\right) \right\vert^2 = \int_{\mathbb{B}_{\rho_0}(p) \setminus Z} \left\vert \nabla\left( \Phi_0 - \Psi_0\right) \right\vert^2 = & \int_{\mathbb{B}_{\rho_0}(p) \setminus Z}    \left\vert \nabla \ln\psi_0^1 \right\vert^2 \left\vert \Phi_0 - \Psi_0\right\vert^2  \\
\leq & 4 C_{n,g}^{\frac{2}{n}} \gamma_{n,g}^{\frac{2}{n}} \int_{\mathbb{B}_{\rho_0}(p) }   \frac{ \left\vert \Phi_0 - \Psi_0\right\vert^2 }{\left(\rho_0-\left\vert z \right\vert\right)^2} dv_g \\
\leq & \tilde{C}_{n,g} \gamma_{n,g}^{\frac{2}{n}} \int_{\mathbb{B}_{\rho_0}(p) } \left\vert \nabla\left( \Phi_0 - \Psi_0\right) \right\vert_g^2dv_g
\end{split}
\end{equation*}
so that assuming that $\tilde{C}_{n,g} \gamma_{n,g}^{\frac{2}{n}}\leq \frac{1}{2}$, we obtain that $\nabla \Phi_0 =_{a.e} \nabla \Psi_0$ and then that for a.e $z \in \mathbb{B}_{\rho_0}(p)\setminus Z$, $ \Phi_0(z) =  \Psi_0(z)$ and then that $\Phi_0 =_{a.e} \Psi_0$. Now, coming back to \eqref{eqineqforanysubsetA}, it is clear that the right-hand side
$$ 2 \int_{\mathbb{B}_r(p)} P_{\eps} \left\vert \nabla\left( \ln\psi_\eps^{1} \right) \right\vert^2  \left\vert \frac{ \Phi_\eps}{\omega_\eps}-\Psi_\eps\right\vert^2 \to 0 $$
as $\eps\to 0$ by uniform boundedness of $\nabla \Psi_\eps$ and $\nabla \psi_\eps^1$ on $\mathbb{B}_r(\rho)$ (see Claim \ref{clindependanttargetmanifoldepsreg} and Step 1 of the current claim) and strong convergence to $0$ of $\frac{\Phi_{\eps}}{\omega_\eps} - \Psi_\eps $ in $L^2$. The proof of \eqref{eqestPepsphioveromegadimn} is complete.

\medskip

\textbf{STEP 6:} We complete the proof of \eqref{mainestdimn}. We have:
\begin{equation*}
\begin{split}
 \int_{\mathbb{B}_r(p)}  Q_\eps \left\vert \nabla \left( \Phi_\eps - \Psi_{\eps}\right) \right\vert^2_g \leq & 2 \int_{\mathbb{B}_r(p)}  Q_\eps \left\vert \nabla \left( \Phi_\eps - \frac{\Phi_{\eps}}{\omega_\eps}\right) \right\vert^2_g + 2 \int_{\mathbb{B}_r(p)}  P_\eps \left\vert \nabla \left( \frac{\Phi_\eps}{\omega_\eps} - \Psi_{\eps}\right) \right\vert^2_g \\
& + 2 \int_{\mathbb{B}_r(p)}  \left(\left\vert \nabla \Psi_\eps \right\vert_g^{n-2} - \left\vert \nabla \Phi_\eps \right\vert_g^{n-2} \right) \left\vert \nabla \left( \frac{\Phi_\eps}{\omega_\eps} - \Psi_{\eps}\right) \right\vert^2_g \\
& + 2 \int_{\mathbb{B}_r(p)}  \left(\left\vert \nabla \Phi_\eps \right\vert_g^{n-2} -Q_\eps \right) \left\vert \nabla \left( \frac{\Phi_\eps}{\omega_\eps} - \Psi_{\eps}\right) \right\vert^2_g
 \end{split} \end{equation*}
The first left-hand term converges to $0$ because of \eqref{eqradialreplacementdimn}, the second one converges to $0$ because of \eqref{eqestPepsphioveromegadimn}. The third one converges to $0$ by the use of a H\"older inequality and  \eqref{eqstep3convergesto0} and the fourth one converges to 0 thanks to \eqref{eqQepsPhiepsdimn}.

The proof of \eqref{mainestdimn} is complete.
\end{proof}

We noticed that up to a subsequence, $\Psi_\eps$ and $\frac{\Phi_\eps}{\omega_\eps}$ respectively converge to some maps $\Psi_0 : \mathbb{B}_r(p)\to \mathbb{R}^{\mathbb{N}}$ and $\Phi_0 : \mathbb{B}_r(p) \to \mathbb{R}^{\mathbb{N}}$ in the following sense: for all $1\leq p<+\infty$, and $1 \leq q < n $
$$ \left(\int_{\mathbb{B}_r(p)} \left\vert \frac{\Phi_\eps}{\omega_\eps} - \Phi_0 \right\vert^p\right)^{\frac{1}{p}} +  \left(\int_{\mathbb{B}_r(p)} \left\vert \nabla\left( \frac{\Phi_\eps}{\omega_\eps} - \Phi_0 \right) \right\vert^q\right)^{\frac{1}{q}} \to 0$$
as $\eps \to 0$. $\Psi_\eps$ satisfies the same convergence properties but also more (we do not need): there is some $\alpha\in (0,1)$ such that $\left\vert \nabla \Psi_\eps- \nabla \Psi_0\right\vert$ converges to $0$ in $\mathcal{C}^{0,\alpha}$. Thanks to Claim \ref{clmain}, we obtain that
$$ \int_{\mathbb{B}_r(p)}  \left\vert \nabla \left( \frac{\Phi_\eps}{\omega_\eps} - \Psi_\eps\right) \right\vert^n \to 0 $$
so that by uniqueness of the limit, $\Psi_0 = \Phi_0$.

\subsubsection{The limiting measure has a $\mathcal{C}^{0,\alpha}$ density}
Let $\zeta \in \mathcal{C}_c^{\infty}\left( \mathbb{B}_r(p) , \mathbb{R}^{N}\right) $, we have that
\begin{equation*}
\begin{split} \int_{M} \zeta \left(\lambda_{\eps}^{\frac{n}{2}} \Phi_{\eps} e^{2u_{\eps}}dv_g -  \lambda^{\frac{n}{2}} \Phi_0 d\nu\right)  = \left(\lambda_\eps^{\frac{n}{2}} - \lambda^{\frac{n}{2}} \right) \int_{M} \zeta \Phi_\eps e^{nu_{\eps}}dv_g \\ + \lambda^{\frac{n}{2}} \left( 
   \int_{M} \zeta \left( \Phi_\eps - \Phi_0 \right) e^{nu_{\eps}}dv_g 
+  \int_{M} \zeta  \Phi_0 \left(e^{nu_{\eps}}dv_g - d\nu\right)\right)
\end{split}
\end{equation*}
Then on the first right-hand term, we have thanks to Claim \ref{clbadpoints} that
\begin{equation*} \begin{split} &\left\vert \int_{M}  \zeta \left( \Phi_{\eps} - \Phi_0\right) e^{2u_{\eps}}dA_g  \right\vert \leq  \left(\int_{\mathbb{D}_{r}(p)} \zeta^2 \left\vert \Phi_{\eps} - \Phi_0 \right\vert^2 e^{nu_{\eps}}dA_g\right)^{\frac{1}{2}} \\
& \leq  \left(\frac{\lambda_\eps^{\frac{2-n}{2}}}{\lambda_{\star}\left(\mathbb{B}_{r}(p),e^{n u_{\eps}}, \frac{ Q_\eps}{\lambda_\eps^{\frac{n-2}{2}}}\right)} \int_{\mathbb{B}_{r}(p)} Q_\eps   \left\vert\nabla \left(\zeta \left\vert \Phi_{\eps} - \Phi_0 \right\vert \right) \right\vert^2_g dv_g\right)^{\frac{1}{2}} \\
& \leq C\left( \left( \int_{\mathbb{B}_{r}(p)} Q_\eps \left\vert \nabla \left(\Phi_{\eps} - \Psi_\eps \right) \right\vert^2\right)^{\frac{1}{2}}  + \left( \int_{\mathbb{B}_{r}(p)}  \left\vert \nabla \left(\Psi_{\eps} - \Phi_0 \right) \right\vert^n \right)^{\frac{1}{n}}  \right) \\
& \to 0 \text{ as } \eps \to 0
\end{split}
\end{equation*}
for some constant $C$ independent of $\eps$. Letting $\eps\to 0$ in a weak sense to the eigenvalue equation $-div_g\left( Q_\eps \nabla \Phi_\eps \right) = \lambda_\eps^{\frac{n}{2}} \Phi_\eps e^{nu_{\eps}}$, we get
$$-div_g\left( \left\vert \nabla \Phi_0 \right\vert_g^{n-2}  \nabla \Phi_0 \right) = \lambda^{\frac{n}{2}} \Phi_0 \tilde\mu_0 $$
and since $\Phi_0$ is $n$-harmonic, we obtain that $\tilde\mu_0 = \frac{\left\vert\nabla \Phi_0 \right\vert^n}{\lambda^{\frac{n}{2}} }dv_g$ and the density is a non negative $\mathcal{C}^{0,\alpha}$ function. 

Then we obtain that for some global n-harmonic map $\Phi_0 : M\setminus \{p_1,\cdots,p_s\} \to \mathbb{R}^{\mathbb{N}}$, $\tilde\mu_0 = \frac{\left\vert\nabla \Phi_0 \right\vert^n}{\lambda^{\frac{n}{2}} }dv_g$. 
By a point removability theorem (Claim \ref{clpointremovability}), $\Phi_0$ can be extended to $\Phi_0 : M \to \mathbb{R}^{\mathbb{N}}$ as a n-harmonic map, and the conformal factor has the expected regularity. As already said, $\tilde{\mu}_i =  \frac{\left\vert\nabla \Phi_i \right\vert^n}{\lambda^{\frac{n}{2}} } dx$ for some $n$-harmonic map on the Euclidean space $\mathbb{R}^n$. A pullback by stereographic projection of the round sphere $\pi$ and a point removability theorem (Claim \ref{clpointremovability}), gives the expected regularity on $\pi^{\star} \tilde{\mu}_i$.

In the next section, we prove Theorem \ref{theodiscretespectrum}: the embedding $W^{1,2}(f . g) \to L^2(f . g)$ is compact, where $f = \frac{\left\vert \nabla \Phi_0 \right\vert_g^{2}}{\lambda} $ in $M$ (and $f = \frac{\left\vert \nabla \Phi_i \right\vert_g^{2}}{\lambda} $ in $\mathbb{R}^n$). We can deduce thanks to Remark \ref{remcompactsubdifferential} that the target sphere of $\Phi_0$ (and $\Phi_i$) can be reduced to a finite dimensional sphere. The proof of Proposition \ref{palaissmale} is complete. 

\subsection{A compact embedding for the  weighted Sobolev spaces associated to the limiting metrics}
In this section, we prove Theorem \ref{theodiscretespectrum}.
Let $f : M \to \mathbb{R}_+$ be a non-negative continuous function and we denote $Z$ its zero set. We set $L^2(f.g)$ the set of measurable functions $u : M \to \R$ such that
$$ \|  u  \|_{L^{2}(f.g)} := \int_M u^2 f^{\frac{n}{2}} dv_g < +\infty, $$

We set $ W^{1,2}(f.g) $ the completion of $ \mathcal{C}^{\infty}\left(M\right)$ with respect to the semi-norm
$$ \|  u  \|_{W^{1,2}(f.g)} = \int_M u^2 f^{\frac{n}{2}} dv_g + \int_M \left\vert \nabla u  \right\vert^2  f^{\frac{n-2}{2}} dv_g  $$
We aim at proving that for $f = \frac{\left\vert \nabla \Phi_0 \right\vert_g^{2}}{\lambda}$ given by Theorem \ref{theomain}, the embedding $W^{1,2}(f . g) \to L^2(f . g)$ is compact. For that purpose, we will prove the following local Sobolev embedding:

\begin{cl}
There is $\kappa_0 > 1$, there is $r_0>0$ and $C_0 >0$ such that for any $x \in M $ and any $u\in \mathcal{C}^{\infty}_c\left(\mathbb{B}_r(p)\right)$
$$ \left(\int_{\mathbb{B}_r(p)} u^{2\kappa_0} f^{\frac{n}{2}} dv_g \right)^{\frac{1}{\kappa_0}} \leq C_0 r^{2-\frac{n(\kappa_0-1)}{\kappa_0}}  \int_{\mathbb{B}_r(p)} \left\vert \nabla u \right\vert_g^2 f^{\frac{n-2}{2}} dv_g . $$
\end{cl}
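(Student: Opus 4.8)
The plan is to deduce the weighted Sobolev inequality from the classical (unweighted) Sobolev inequality via the substitution $v=uf^{\frac{n-2}{4}}$, using the second order regularity of $n$-harmonic maps established in Section \ref{sec3} to absorb the error term this substitution creates. Write $e=\left\vert\nabla\Phi_0\right\vert_g$, so that $f=\frac{e^2}{\lambda}$ on $M$ (the case of the bubbles $f=\frac{\left\vert\nabla\Phi_i\right\vert^2}{\lambda}$ on $\R^n$ being identical). For $u\in\mathcal{C}^\infty_c(\mathbb{B}_r(p))$ set $v=uf^{\frac{n-2}{4}}$; since $\frac{n-2}{4}\cdot\frac{2n}{n-2}=\frac n2$ we have $v^{2^*}=u^{2^*}f^{\frac n2}$ with $2^*=\frac{2n}{n-2}$, and $\left\vert\nabla v\right\vert_g^2\leq 2f^{\frac{n-2}{2}}\left\vert\nabla u\right\vert_g^2+2u^2\left\vert\nabla(f^{\frac{n-2}{4}})\right\vert_g^2$. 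The key input from Section \ref{sec3} is a Caccioppoli inequality for $f^{\frac{n-2}{4}}$: differentiating $-div_g(e^{n-2}\nabla\Phi_0)=e^n\Phi_0$ and testing it as in the $\tau$-independent estimates inspired by \cite{Sarsa} (which push the classical Uhlenbeck bound $e^{\frac{n-2}{2}}\nabla\Phi_0\in W^{1,2}_{loc}$ down by one power of $e$), one obtains, for all $\zeta\in\mathcal{C}^\infty_c(M)$,
$$ \int_M\zeta^2\left\vert\nabla(f^{\frac{n-2}{4}})\right\vert_g^2\,dv_g\leq C_1\left(\int_M\left\vert\nabla\zeta\right\vert_g^2 f^{\frac{n-2}{2}}\,dv_g+\int_M\zeta^2 f^{\frac n2}\,dv_g\right), $$
the lower order term being only $\int\zeta^2 f^{\frac n2}$ because the most singular contribution of the right-hand side $e^n\Phi_0$ drops out through $\sum_i\partial_k(e^n)\,\phi_0^i\partial_k\phi_0^i=\tfrac12\partial_k(e^n)\partial_k\left\vert\Phi_0\right\vert^2=0$. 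Using $f\in\mathcal{C}^{0,\alpha}(M)$, hence $f^{\frac n2}\leq\left\Vert f\right\Vert_\infty f^{\frac{n-2}{2}}$, this gives with $\zeta=u$ that $\int\left\vert\nabla v\right\vert_g^2\leq C\big(\int f^{\frac{n-2}{2}}\left\vert\nabla u\right\vert_g^2+\int u^2f^{\frac n2}\big)$.

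Next I would apply the classical Sobolev inequality in the exponential chart (valid for $r<r_0$) to $v\in W^{1,2}_0(\mathbb{B}_r(p))$ and absorb:
$$ \Big(\int_{\mathbb{B}_r(p)}u^{2^*}f^{\frac n2}\Big)^{\frac{2}{2^*}}=\Big(\int\left\vert v\right\vert^{2^*}\Big)^{\frac{2}{2^*}}\leq C_S\int\left\vert\nabla v\right\vert_g^2\leq C\Big(\int_{\mathbb{B}_r(p)}f^{\frac{n-2}{2}}\left\vert\nabla u\right\vert_g^2+\int_{\mathbb{B}_r(p)}u^2 f^{\frac n2}\Big). $$
Since $e^2=\lambda f$ is bounded, $\int_{\mathbb{B}_r(p)}f^{\frac n2}\leq\left\Vert f\right\Vert_\infty^{\frac n2}\left\vert\mathbb{B}_r\right\vert\leq Cr^n$, so by H\"older ($\frac{2}{2^*}+\frac 2n=1$),
$$ \int_{\mathbb{B}_r(p)}u^2 f^{\frac n2}\leq\Big(\int_{\mathbb{B}_r(p)}u^{2^*}f^{\frac n2}\Big)^{\frac{2}{2^*}}\Big(\int_{\mathbb{B}_r(p)}f^{\frac n2}\Big)^{\frac 2n}\leq Cr^2\Big(\int_{\mathbb{B}_r(p)}u^{2^*}f^{\frac n2}\Big)^{\frac{2}{2^*}}. $$
Taking $r_0$ small enough that this error is absorbed into the left-hand side gives the Claim for $\kappa_0=\frac{n}{n-2}$ (for which $2-\frac{n(\kappa_0-1)}{\kappa_0}=0$), namely $\big(\int_{\mathbb{B}_r(p)}u^{2^*}f^{\frac n2}\big)^{\frac{2}{2^*}}\leq C_2\int_{\mathbb{B}_r(p)}\left\vert\nabla u\right\vert_g^2 f^{\frac{n-2}{2}}$, and, feeding this back into the H\"older estimate, the weighted Poincar\'e inequality $\int_{\mathbb{B}_r(p)}u^2 f^{\frac n2}\leq C_3r^2\int_{\mathbb{B}_r(p)}\left\vert\nabla u\right\vert_g^2 f^{\frac{n-2}{2}}$.

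For $1<\kappa_0<\frac{n}{n-2}$ I would interpolate. With $\theta=\frac{n-(n-2)\kappa_0}{2}\in(0,1)$ one checks $u^{2\kappa_0}f^{\frac n2}=(u^2f^{\frac n2})^\theta(u^{2^*}f^{\frac n2})^{1-\theta}$, so H\"older together with the Poincar\'e inequality and the $\kappa_0=\frac{n}{n-2}$ case give $\int_{\mathbb{B}_r(p)}u^{2\kappa_0}f^{\frac n2}\leq Cr^{2\theta}\big(\int_{\mathbb{B}_r(p)}\left\vert\nabla u\right\vert_g^2 f^{\frac{n-2}{2}}\big)^{\theta+(1-\theta)\frac{n}{n-2}}$; since $\theta+(1-\theta)\frac{n}{n-2}=\kappa_0$ and $\frac{2\theta}{\kappa_0}=2-\frac{n(\kappa_0-1)}{\kappa_0}$, this is exactly the Claim. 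The Claim then yields Theorem \ref{theodiscretespectrum}: a finite covering of $M$ by balls $\mathbb{B}_{r_0}(p_j)$ with a partition of unity gives $W^{1,2}(f.g)\hookrightarrow L^{2\kappa_0}(f.g)$ continuously, and since $\int_M f^{\frac n2}<\infty$, $\int_{Z_\eps}f^{\frac n2}\to 0$ as $\eps\to 0$, and on $M\setminus Z$ the weight $f^{\frac{n-2}{2}}$ is smooth and locally bounded below, the usual Rellich/diagonal argument (splitting $M$ into $M\setminus Z_\eps$ and a neighbourhood $Z_\eps$ of $Z$, controlling the tail with the $L^{2\kappa_0}$ bound) upgrades it to a compact embedding $W^{1,2}(f.g)\hookrightarrow L^2(f.g)$; the $\mathcal{C}^{1,\alpha}$ regularity of eigenfunctions of $\Delta_{f.g}$ follows from interior elliptic regularity on $M\setminus Z$ and the degenerate elliptic regularity of $div_g(f^{\frac{n-2}{2}}\nabla\,\cdot\,)=\lambda^{\frac{2-n}{2}}div_g(e^{n-2}\nabla\,\cdot\,)$ near $Z$ provided by Section \ref{sec3}.

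The main difficulty is the first step, i.e. controlling the error $\int u^2\left\vert\nabla(f^{\frac{n-2}{4}})\right\vert_g^2=\frac{(n-2)^2}{4\lambda^{(n-2)/2}}\int u^2 e^{n-4}\left\vert\nabla e\right\vert^2$: the a priori bound $\int e^{n-2}\left\vert\nabla e\right\vert^2<\infty$ coming from the classical Uhlenbeck estimate is one power of $e$ too weak (and $f^{\frac{n-2}{4}}$ may genuinely be less regular than the Uhlenbeck object $f^{n/4}$ near the zeros of $\nabla\Phi_0$), so one really needs the refined, regularization-independent Caccioppoli inequalities for the gradient equation of $(\tau,n)$-harmonic maps from Section \ref{sec3}, and one must check that their lower order term is dominated by $f^{\frac n2}$, so that the weighted Poincar\'e inequality --- itself produced by the argument --- closes the loop by absorption for $r$ small.
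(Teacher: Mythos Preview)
Your overall strategy --- substitute $v=u\cdot(\text{power of }f)$, apply the unweighted Sobolev inequality, control the cross term by a Caccioppoli estimate on the relevant power of $f$, and absorb the lower-order term for $r$ small --- is exactly the paper's. The gap is in the power you choose: you set $v=uf^{\frac{n-2}{4}}$, aiming at the optimal $\kappa_0=\frac{n}{n-2}$, and this forces the Caccioppoli bound
\[
\int\zeta^2\bigl|\nabla f^{\frac{n-2}{4}}\bigr|^2
\;\sim\;\int\zeta^2\,e^{\,n-4}\,|\nabla e|^2
\;\lesssim\;\int|\nabla\zeta|^2 f^{\frac{n-2}{2}}+\int\zeta^2 f^{\frac n2}.
\]
This is \emph{two} powers of $e$ below the Uhlenbeck estimate $\int e^{\,n-2}|\nabla e|^2<\infty$, not one, and it is not delivered by the $\tau$-uniform differential inequality of Claim~\ref{clapproxdeltanharm}. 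If you run that inequality against $u^2\mu^{-\nu_0}$, the coefficient in front of $\int u^2\mu^{\,n-2-\nu_0}|\nabla\mu|^2$ that must be positive is $1-\nu_0-\theta^2$; your choice $\kappa_0=\frac{n}{n-2}$ corresponds to $\nu_0=2$, and the argument collapses. The paper's Remark right after the proof makes this explicit: the method yields only $\nu_0<1$, i.e.\ $\kappa_0<\frac{n}{n-1}$, and pushing $\nu_0$ further (equivalently, getting $f^{\frac{n-\nu_0}{4}}\in H^1$ for larger $\nu_0$) is stated there as open. The cancellation $\sum_i\partial_k(e^n)\phi_0^i\partial_k\phi_0^i=0$ you invoke is correct but does not change this count: the obstruction comes from the degenerate ellipticity of $-\mathrm{div}(a^\tau\nabla\,\cdot\,)$, not from the right-hand side.

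The fix is simply to aim lower. Take any $\kappa_0\in\bigl(1,\frac{n}{n-1}\bigr)$, set $\nu_0=\frac{n(\kappa_0-1)}{\kappa_0}\in(0,1)$, and substitute $v=uf^{\frac{n}{4\kappa_0}}$ (so that $v^{2\kappa_0}=u^{2\kappa_0}f^{\frac n2}$ and the classical Sobolev inequality carries the factor $r^{2-n(\kappa_0-1)/\kappa_0}$). The Caccioppoli estimate you need is then exactly the one the paper proves, $\int u^2|\nabla f^{\frac{n}{4\kappa_0}}|^2\lesssim\int f^{\frac{n}{2\kappa_0}}|\nabla u|^2+\int u^2 f^{\frac{n}{2\kappa_0}}(f+\kappa_g)$, and your absorption argument via $f\in L^\infty$ and $r$ small closes it. With this correction the interpolation step is unnecessary: each admissible $\kappa_0$ is obtained directly. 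Your discussion of how the Claim implies Theorem~\ref{theodiscretespectrum} is fine and matches the paper.
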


\begin{proof}
Using a classical Sobolev inequality, we have a universal constant $C_{0}$ such that
\begin{equation*}
\begin{split} \left(\int_{\mathbb{B}_r(p)} u^{2\kappa_0} f^{\frac{n}{2}} dv_g \right)^{\frac{1}{\kappa_0}} \leq &  C_0 r^{2-\frac{n(\kappa_0-1)}{\kappa_0}}  \int_{\mathbb{B}_r(p)} \left\vert \nabla\left( u f^{\frac{n}{4\kappa_0}}\right) \right\vert_g^2  dv_g  \\
\leq & 2 C_0 r^{2-\frac{n(\kappa_0-1)}{\kappa_0}} \left(  \int_{\mathbb{B}_r(p)} \left\vert \nabla  u \right\vert_g^2 f^{\frac{n}{2\kappa_0}}  dv_g +  \int_{\mathbb{B}_r(p)} \left\vert \nabla f^{\frac{n}{4\kappa_0}} \right\vert_g^2 u^2 \right)
\end{split}
\end{equation*}
We aim at estimating the right-hand term, noticing that $f$ is nothing but the limit of $B:= \left\vert \nabla \Psi_\tau \right\vert^2_g$ as $(\tau,m) \to (0,+\infty)$, where $\Psi_\tau : \mathbb{B}_r(p)\to \mathbb{S}^m$ is a $(\tau,n)$-harmonic map, if we choose $r \leq \frac{r_\star}{2}$, where 
$$r_{\star} = \sup\left\{ r >0 , \sup_x \int_{\mathbb{B}_r(x)} \left( f + \tau\right)^{\frac{n}{2}} \leq \frac{\eps_{n,g}}{2} \right\}. $$
We recall that $\Psi_\tau$ satisfies by Claim \ref{clapproxdeltanharm}
$$  \mu^{n-2} B  \left(\kappa_g  + B\right) \geq  -   div_g\left( a^\tau \nabla \mu^n \right) +  \left\vert \nabla^2 \Psi \right\vert \mu^{n-2} +\frac{n-2}{4} \mu^{n-4} \left\vert \nabla B \right\vert^2 $$
where $\mu = \left(B + \tau\right)^{\frac{1}{2}}$ and
\begin{equation*} a_{i,j}^\tau =  \frac{1}{n}\left( \delta_{i,j} + (n-2) \frac{\nabla_i \Psi.\nabla_j \Psi}{B + \tau} \right).
 \end{equation*}
 We integrate this equation over $ u^{2} \mu^{-\nu_0}$ for $\nu_0 = \frac{n(\kappa_0 - 1)}{\kappa_0}$ and we get
\begin{equation*}
\begin{split} \int_{\mathbb{B}_r(p)} u^2 \left(a\left( \nabla \mu^{-\nu_0}\nabla\left( \mu^n\right)\right) + \frac{n-2}{4} \mu^{n-4-\nu_0} \left\vert \nabla B \right\vert^2_g   + \left\vert \nabla^2 \Psi \right\vert^2_g \mu^{n-2-\nu_0} \right)  \\
 \leq \int_{\mathbb{B}_r(p)} u^2 B \mu^{n-2-\nu_0}(B+k_g) - \int_{\mathbb{B}_r(p)} 2u \mu^{-\nu_0} a\left( \nabla u  \nabla\left( \mu^n \right)\right)
\end{split} \end{equation*}
Knowing that $\frac{1}{n} \left\vert X \right\vert^2 \leq a_{i,j}X^i X^j \leq \frac{n-1}{n}\left\vert X \right\vert^2$, and the following computations
\begin{equation*}
  \nabla \mu^{-\nu_0}\nabla\left( \mu^n \right) = - \nu_0 n \mu^{n-2-\nu_0} \left\vert \nabla \mu \right\vert^2 \text{ and } \left\vert \nabla B \right\vert^2 = 4 \mu^2 \left\vert \nabla \mu \right\vert^2
  \end{equation*}
we obtain that
\begin{equation*}
\begin{split} & \left( n-2 - (n-1)\nu_0 \right) \int_{\mathbb{B}_r(p)} u^2  \mu^{n-2-\nu_0}  \left\vert \nabla \mu \right\vert^2_g + \int_{\mathbb{B}_r(p)} u^2  \left\vert \nabla^2 \Psi \right\vert^2_g \mu^{n-2-\nu_0}  \\
 \leq & (n-1) \int_{\mathbb{B}_r(p)} 2 \left\vert u \nabla u \right\vert \mu^{n-1-\nu_0} \left\vert \nabla \mu \right\vert 
+ \int_{\mathbb{B}_r(p)} u^2 \mu^{n-\nu_0}(B+k_g) \\
 \leq & (n-1)\left( \theta^2 \int_{\mathbb{B}_r(p)} u^2  \mu^{n-2-\nu_0}  \left\vert \nabla \mu \right\vert^2_g + \frac{1}{\theta^2}  \int_{\mathbb{B}_r(p)} \mu^{n-\nu_0}  \left\vert \nabla u \right\vert^2_g\right) \\
 & + \int_{\mathbb{B}_r(p)} u^2 \mu^{n-\nu_0}(B+k_g)
\end{split} \end{equation*}
so that knowing that $ \left\vert \nabla^2 \Psi \right\vert^2_g \geq  \left\vert \nabla \mu \right\vert^2_g $ (by a H\"older inequality)
\begin{equation*}
\begin{split} \left(1 - \nu_0 - \theta^2\right) & \int_{\mathbb{B}_r(p)} u^2  \mu^{n-2-\nu_0}  \left\vert \nabla \mu \right\vert^2_g \\
\leq & \frac{1}{\theta^2}  \int_{\mathbb{B}_r(p)} \mu^{n-\nu_0}  \left\vert \nabla u \right\vert^2_g + \frac{1}{n-1} \int_{\mathbb{B}_r(p)} u^2 \mu^{n-\nu_0}(B+k_g)
\end{split} \end{equation*}
and choosing $\nu_0 < 1$ and  $\theta^2 = \frac{1-\nu_0}{2}$, we obtain such that
\begin{equation*}
 \int_{\mathbb{B}_r(p)} u^2   \left\vert \nabla \mu^{\frac{n-\nu_0}{2}} \right\vert^2_g 
\leq  \frac{4}{(1-\nu_0)^2} \int_{\mathbb{B}_r(p)} \mu^{n-\nu_0}  \left\vert \nabla u \right\vert^2_g + \frac{2}{(1-\nu_0)(n-1)} \int_{\mathbb{B}_r(p)} u^2 \mu^{n-\nu_0}(B+k_g) 
 \end{equation*}
 and passing to the limit as $\tau \to 0$, we have that $\mu^{2} \to f$ as $\tau\to 0$ and noticing that $n-\nu_0 = \frac{n}{\kappa_0}$ we obtain
 \begin{equation*}
 \int_{\mathbb{B}_r(p)} u^2  \left\vert \nabla f^{\frac{n}{4\kappa_0}} \right\vert^2_g 
\leq   \frac{4}{(1-\nu_0)^2}  \int_{\mathbb{B}_r(p)} f^{\frac{n}{2\kappa_0}} \left\vert \nabla u \right\vert^2_g + \frac{2}{(1-\nu_0)(n-1)} \int_{\mathbb{B}_r(p)} u^2 f^{\frac{n}{2\kappa_0}}(f+k_g) 
 \end{equation*}
so that there is a constant $c:= c(n, 1-\nu_0) $ such that
\begin{equation*}
\begin{split}  \left(\int_{\mathbb{B}_r(p)} u^{2\kappa_0} f^{\frac{n}{2}} dv_g \right)^{\frac{1}{\kappa_0}} \leq  &  2 C_0 r^{2-\frac{n(\kappa_0-1)}{\kappa_0}}   \int_{\mathbb{B}_r(p)} \left\vert \nabla  u \right\vert_g^2 f^{\frac{n}{2\kappa_0}} ( 1 + c)  dv_g  \\
 &  +  2 C_0 r^{2-\frac{n(\kappa_0-1)}{\kappa_0}}   c  \int_{\mathbb{B}_r(p)} u^2 f^{\frac{n}{2\kappa_0}}(f+k_g) dv_g \\
 \leq &  2 C_0 r^{2-\frac{n(\kappa_0-1)}{\kappa_0}}  A^{1-\frac{n(\kappa_0-1)}{2\kappa_0}} \left(1+c \right) \int_{\mathbb{B}_r(p)} \left\vert \nabla  u \right\vert_g^2 f^{\frac{n-2}{2}}dv_g \\
 & + 2 C_0 c_g r^2 c \left(A+k_g\right)  \left(\int_{\mathbb{B}_r(p)} u^{2\kappa_0} f^{\frac{n}{2}} dv_g \right)^{\frac{1}{\kappa_0}}
\end{split}
\end{equation*}
noticing that $f$ is bounded by a constant $A$ in $M$ and applying a H\"older inequality for the last inequality. Letting $r>0$ small enough so that $2 C_0 c_g r^2 c \left(A+k_g\right)  < \frac{1}{2}$, we obtain the expected Sobolev inequality.
\end{proof}

\begin{rem} In the classical Sobolev inequality, the optimal constant $2\kappa_0$ is $\frac{2n}{n-2}$. 
Our Caccioppoli type estimate involved in the proof of the Sobolev inequality seems to be optimal regarding other similar regularity results on the $n$-harmonic equation (e.g \cite{Sarsa}). In this case, we only obtain $2\kappa_0 < \frac{2n}{n-1}$.
It would be interesting to improve $\nu_0 >0$ so that 
$f^{\frac{n-\nu_0}{4}}$ is a $H^1$ function. 
\end{rem}

Now, it is clear that the embedding $W^{1,2}(f . g) \to L^2(f . g)$ is compact. Indeed, first, any sequence of functions $(u_k)$ bounded in $W^{1,2}(f . g)$ has to be bounded in $L^{2\kappa_0}(f . g)$ (up to take a partition of unity to globalize the previous Sobolev inequality). We also know that up to a subsequence, it converges weakly to $u\in W^{1,2}(f . g)$. Moreover, using the classical compact embedding $W^{1,2}(g) \to L^2(g)$ on any compact subset of $M \setminus Z$ where $Z = \{ x\in M; f(x)=0\}$, we deduce that up to a subsequence, the sequence $(u_k)$ converges almost everywhere to $u$ with respect to the measure $f^{\frac{n}{2}} dv_g$. These two properties prove that up to a subsequence $(u_k)$ converges strongly in $L^2(f . g)$. 

\subsection{Conclusion}
Let's prove Theorem \ref{theosplit}. By section \ref{sec1}, there is a Palais-Smale sequence for the maximization problem. We denote $\lambda_\eps^{\frac{n}{2}} = \lambda_k(Q_\eps,e^{nu_\eps})$. We apply proposition \ref{palaissmale} to this sequence. Theorem \ref{theosplit} then follows from upper semi-continuity of $\bar\lambda_k$. Indeed, let $\theta_0,\cdots,\theta_k$ be the $(k+1)$ first eigenfunctions of the limiting manifold endowed with generalized metrics $(\widetilde{M},f.g) = (M,f_0.g) \sqcup  \left(\mathbb{S}^n, f_1.h \right)\sqcup \cdots \sqcup \left(\mathbb{S}^n,f_t.h\right) $ if $f_0\neq 0$ or $ (\widetilde{M},f.g) =  \left(\mathbb{S}^n,f_1.h\right)\sqcup \cdots \sqcup \left(\mathbb{S}^n,f_t.h\right) $ if $f_0 = 0$, where $f_0 dv_g = \tilde{\mu}_0$ and $f_i dv_h = \pi^{\star}\tilde{\mu}_i$ (where $\pi : \mathbb{S}^n \to \mathbb{R}^n \cup \{\infty\}$ is a stereographic projection) and $f_i$ are $\mathcal{C}^{0,\alpha}$ functions. We use $\theta_0,\cdots,\theta_k$ as test functions in the variational characterization of $\lambda_\eps$ in the following way: we set
$$ \tilde{\theta}_i^\eps(x) = \eta_0(x) \theta_i(x) + \eta_1\left( \frac{x- q_1^\eps}{\alpha_1^\eps} \right) \theta_i \circ \pi^{-1}\left( \frac{x- q_1^\eps}{\alpha_1^\eps} \right) + \cdots + \eta_t \left( \frac{x- q_t^\eps}{\alpha_t^\eps} \right) \theta_i\circ \pi^{-1}\left( \frac{x- q_t^\eps}{\alpha_t^\eps} \right) $$
where points $q_i^\eps$ and scales $\alpha_i^\eps$ are given by Claim \ref{clbubbletree} and we define $\eta_i \in \mathcal{C}_c^\infty\left(\Omega_i(\rho) \right)$ as cut-off functions such that $\eta_i = 1$ in $ \mathcal{C}^\infty\left(\Omega_i(\sqrt{\rho}) \right)$ and 
$$\int_{M} \left\vert \nabla \eta_0 \right\vert_g^n \leq \frac{C}{\ln{\frac{1}{\rho}}} \text{ and } \int_{\mathbb{R}^n} \left\vert \nabla \eta_i \right\vert_g^n \leq \frac{C}{\ln{\frac{1}{\rho}}} \text{ if } i \geq 1 $$
where if we denote $p_1^i, \cdots, p_1^{s_i}$ the rescaled bad points,
$$ \Omega_0(\rho) = \Sigma \setminus \bigcup_{j=1}^{s_0} \mathbb{B}_\rho\left(p_i^j \right) = \text{ and } \Omega_i(\rho) = \mathbb{B}_{\frac{1}{\rho}}\setminus \bigcup_{j=1}^{s_i} \mathbb{B}_\rho\left(p_i^j \right) \text{ if } i \geq 1. $$
Then, knowing that $Q_\eps\left( q_i^\eps + \alpha_i^\eps x \right) \to f^{\frac{n-2}{2}}$ in $L^{\frac{n}{n-2}}\left(\Omega_i(\rho)\right)$ and $\lambda_\eps^{\frac{n}{2}}e^{nu_\eps}\left( q_i^\eps + \alpha_i^\eps x \right) \to f^{\frac{n}{2}}$ for the weak-$\star$ convergence of measures on $\Omega_i(\rho)$, a straightforward computation gives that 
$$ \lambda_\eps \leq \max_{\varphi \in \left\langle \tilde{\theta}_0^\eps,\cdots,\tilde{\theta}_k^\eps \right\rangle} \frac{\int_M \left\vert \nabla \varphi \right\vert_g^2 \lambda_\eps^{\frac{2-n}{2}}Q_\eps dv_g}{ \int_M  \varphi^2 e^{n u_\eps} dv_g} \leq  \lambda_k\left( \tilde{M}, f.h \right) + o(1) + c \left(\frac{C}{\ln{\frac{1}{\rho}}}\right)^{\frac{1}{n}} $$
as $\eps \to 0$. Letting $\eps \to 0$ and then $\rho \to 0$ gives
$$ \limsup_{\eps\to 0} \lambda_\eps \leq  \lambda_k\left( \tilde{M}, f.h \right) $$
then $\Lambda_k(M, [g]) \leq \lambda_k\left( \tilde{M}, f.h \right) $. By a result by Colbois and El-Soufi \cite{ces} (see also \cite{fs20} in Steklov case), we must have equality, so that the limit of $\lambda_\eps$ is the $k$-th eigenvalue of the limit of the maximizing sequence. We also obtain   Theorem \ref{theomain}: with the strict inequality assumption, there is no bubbling ($t=0$).

\section{Independance of regularity estimates for harmonic maps with respect to the dimension of the target sphere} \label{sec3}

In this section, we aim at generalizing the known estimates on $n$-harmonic maps, noticing carefully their possible dependance on the dimension of the target sphere. 

\subsection{A priori estimates for $n$-harmonic maps from a $n$-manifold into spheres}

We say that $\Psi : \mathbb{B}^n \to \mathbb{S}^p$ is a $\tau$-approximated $n$-harmonic map if it is the limit as $\tau \to 0$ of a sequence of maps $\Psi_{\tau} : \mathbb{B}^n \to \mathbb{S}^p$ which are solutions of the following minimization problems
$$ \inf \left\{ \int_{\mathbb{B}^n} \left(\left\vert \nabla \varphi \right\vert_g^2 + \tau\right)^{\frac{n}{2}} dv_g ; \varphi :  \mathbb{B}^n \to \mathbb{S}^p \text{ and } \varphi = \Psi \text{ on } \partial\mathbb{B}^n  \right\} $$
It satisfies the following Euler-Lagrange equation
$$ -div_g\left( \left(\left\vert \nabla \Psi_\tau \right\vert_g^2 + \tau \right)^{\frac{n-2}{2}} \nabla \Psi_\tau \right) = \left(\left\vert \nabla \Psi_\tau \right\vert_g^2 + \tau \right)^{\frac{n-2}{2}} \left\vert \nabla \Psi_\tau \right\vert_g^2 \Psi_\tau  $$
and we have that $\Psi_{\tau} \in \mathcal{C}^{\infty}\left( \mathbb{B}^n \right)$\cite{HardtLin}\cite{Strzelecki}. We say that $\Psi_\tau$ is a $(\tau,n)$-harmonic map. For instance, we will deduce from Proposition \ref{regular n-harm} and \cite{Strzelecki} that any $n$-harmonic map into a sphere is locally a $\tau$-approximated $n$-harmonic map. In particular, we deduce a local uniqueness of the harmonic replacement of $n$-harmonic maps into a sphere $\mathbb{S}^p$. We also have the following proposition:

\begin{prop} \label{propc0estnharm}
There is a constant $\eps_0$ and a constant $C_{n}$ such that for any $p\geq 2$ and any $\tau$-approximated $n$-harmonic map $\Psi: \mathbb{B}_n \to \mathbb{S}^p$ such that 
$$ \int_{ \mathbb{B}^n} \left\vert \nabla \Psi \right\vert_g^n \leq \eps_0 $$
we have for any ball $\mathbb{B}^n_r(p)\subset \mathbb{B}_n$ 
$$ r^n \left\| \nabla \Psi \right\|_{\mathcal{C}^{0}\left(\mathbb{B}^n_{\frac{r}{2}}(p)\right)}^n \leq C_n \int_{\mathbb{B}^n_r(p)} \left\vert \nabla \Psi \right\vert_g^n dv_g $$

\end{prop}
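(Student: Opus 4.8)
The plan is first to reduce to the smooth approximants: since a $\tau$-approximated $n$-harmonic map is, by definition, a local limit of smooth $(\tau,n)$-harmonic maps $\Psi_\tau$ with $|\nabla\Psi_\tau|_g^2\to|\nabla\Psi|_g^2$ off the degeneracy set, it suffices to establish the estimate for the $\Psi_\tau$ with $\eps_0$ and $C_n$ independent of $\tau\in(0,1)$ and of $p$ (and depending on the background metric only through the normalisation $\mathrm{inj}_g(M)\ge2$ fixed above), and then let $\tau\to0$. Next I would use that the $n$-energy $\varphi\mapsto\int|\nabla\varphi|_g^n$ is invariant under rescalings $x\mapsto p+rx$ (the critical dimension), and that the rescaled metrics $g(p+r\,\cdot)$ on $\mathbb{B}_n$ are uniformly equivalent to a Euclidean metric with uniformly bounded derivatives, to reduce to proving the implication
$$ \int_{\mathbb{B}_n}|\nabla\Psi_\tau|_g^n\le\eps_0 \ \Longrightarrow\ \big\|\nabla\Psi_\tau\big\|_{\mathcal{C}^0(\mathbb{B}^n_{1/2})}^n\le C_n\int_{\mathbb{B}_n}|\nabla\Psi_\tau|_g^n ; $$
a general ball $\mathbb{B}^n_r(p)\subset\mathbb{B}_n$ is then handled by applying this to the rescaling of centre $p$ and radius $r$, noting that $\int_{\mathbb{B}^n_r(p)}|\nabla\Psi_\tau|_g^n\le\int_{\mathbb{B}_n}|\nabla\Psi_\tau|_g^n\le\eps_0$.

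\textbf{The scalar energy-density inequality.} The key step is to produce, from the Euler--Lagrange equation, a differential inequality for the energy density whose constants are blind to the target dimension. Writing $B=|\nabla\Psi_\tau|_g^2$, $\mu=(B+\tau)^{1/2}$, I would differentiate the equation, contract with $\nabla\Psi_\tau$, and use the sphere constraint $|\Psi_\tau|\equiv1$ (hence $\Psi_\tau\!\cdot\!\nabla\Psi_\tau=0$ and $\Psi_\tau\!\cdot\!\nabla^2\Psi_\tau=-B$) to arrive, via a Bochner--Weitzenb\"ock computation, at an inequality of the form
$$ -\,\mathrm{div}_g\!\big(a^\tau\nabla\mu^n\big) + \tfrac{n-2}{4}\,\mu^{n-4}|\nabla B|_g^2 + |\nabla^2\Psi_\tau|_g^2\,\mu^{n-2} \;\le\; \mu^{n-2}B\,(B+\kappa_g), $$
with $\kappa_g$ depending only on $g$ and $a^\tau_{ij}=\tfrac1n\big(\delta_{ij}+(n-2)\,\nabla_i\Psi_\tau\!\cdot\!\nabla_j\Psi_\tau/(B+\tau)\big)$, so that $\tfrac1n|X|^2\le a^\tau_{ij}X^iX^j\le\tfrac{n-1}{n}|X|^2$ (this is exactly Claim~\ref{clapproxdeltanharm}). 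The crucial observation is that every term here is a scalar built only from $B$, $|\nabla B|_g$, $|\nabla^2\Psi_\tau|_g$ and the matrix $a^\tau$, with no dependence on the number $p$ of coordinates. Discarding the two non-negative terms on the left and bounding $\mu^{n-2}B(B+\kappa_g)\le\mu^n(B+\kappa_g)$ (using $B\le\mu^2$), one sees that $w:=\mu^n\ge0$ is a weak subsolution
$$ -\,\mathrm{div}_g\!\big(a^\tau\nabla w\big)\le V\,w, \qquad V:=B+\kappa_g, $$
of a uniformly elliptic equation whose ellipticity constants depend on nothing but $n$ (in particular not on $\tau$ or $p$).

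\textbf{Moser iteration and conclusion.} Since $\int_{\mathbb{B}_n}B^{n/2}=\int_{\mathbb{B}_n}|\nabla\Psi_\tau|_g^n\le\eps_0$, the potential $V$ is small in $L^{n/2}(\mathbb{B}_n)$ once $\eps_0$ is small (its $\kappa_g$ part being a fixed constant). I would then run the standard Moser iteration for subsolutions of uniformly elliptic equations with an $L^{n/2}$ potential: test the inequality for $w$ against $\eta^2 w^{\beta}$ ($\beta\ge1$, $\eta$ a cutoff), invoke the ellipticity bounds, Young's inequality, the Sobolev inequality, and H\"older to absorb $\int\eta^2 w^{\beta}V$ into the resulting gradient term — the smallness of $\eps_0$ being precisely what makes this absorption possible — thereby obtaining reverse-H\"older inequalities relating $\|w\|_{L^{\kappa q}}$ on a ball to $\|w\|_{L^q}$ on a slightly larger one, for some $\kappa=\kappa(n)>1$. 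Iterating over $q=1,\kappa,\kappa^2,\dots$ with geometrically shrinking radii gives $\sup_{\mathbb{B}^n_{1/2}}w\le C_n\int_{\mathbb{B}_n}w$. Letting $\tau\to0$ (so $w=\mu^n\to|\nabla\Psi|_g^n$ locally) yields $\|\nabla\Psi\|_{\mathcal{C}^0(\mathbb{B}^n_{1/2})}^n\le C_n\int_{\mathbb{B}_n}|\nabla\Psi|_g^n$, and undoing the initial rescaling gives the scaled estimate on every $\mathbb{B}^n_r(p)$.

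\textbf{Main obstacle.} The two genuinely delicate points are the ones isolated above. Securing the dimension-independence forces the whole argument to be phrased in terms of the scalars $B$ and $|\nabla^2\Psi_\tau|_g$ — never componentwise — which is where the structure of maps into spheres ($|\Psi|\equiv1$) is indispensable. And in the Moser iteration the potential term $Vw$ is of the same scaling order as the principal part at the critical exponent $n/2$, so its absorption must be paid for with the a priori smallness of the $n$-energy; keeping $\tau>0$ throughout, so that $\mathrm{div}_g(a^\tau\nabla\,\cdot\,)$ stays uniformly elliptic with $\tau$-independent constants, and only passing to the degenerate limit $\tau\to0$ at the end, is what makes the degeneracy of the $n$-Laplacian harmless.
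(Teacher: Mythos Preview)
Your proposal is correct and follows the paper's overall architecture---reduce to smooth $(\tau,n)$-harmonic approximants, derive the scalar subsolution inequality of Claim~\ref{clapproxdeltanharm} for $w=\mu^n$, extract a mean-value inequality, and pass to the limit $\tau\to0$---but the heart of the argument, the passage from the subsolution inequality to the pointwise bound, is handled differently. You run a Moser iteration directly on $-\mathrm{div}_g(a^\tau\nabla w)\le(B+\kappa_g)w$, splitting the potential into the bounded piece $\kappa_g$ (harmless) and the critical piece $B\in L^{n/2}$, and use the smallness $\|B\|_{L^{n/2}}\le\eps_0$ to absorb the borderline term into the Sobolev side at every step of the iteration. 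The paper instead avoids the critical-potential issue altogether by a point-picking argument \`a la Schoen--Uhlenbeck: it maximises $F(x)=(r_0-|x-p|)^n|\nabla\Psi_\tau|_g^n$ over the ball, assumes by contradiction that the maximum exceeds $\delta$, and rescales at the maximum point so that the gradient becomes bounded by $1$ on the unit ball; the potential is then $L^\infty$-bounded by $1+\kappa_{\tilde g}$, and the standard mean-value inequality for subsolutions with bounded coefficients (cited from Han--Lin) finishes. Your route is more direct but leans on the sharp $L^{n/2}$-smallness absorption, while the paper's point-picking reduces everything to the elementary bounded-potential case at the cost of one extra contradiction step; both yield constants depending only on $n$ and $g$, independent of $p$ and $\tau$, which is the essential point.
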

In order to prove Proposition \ref{propc0estnharm}, we first prove that a $(\tau,n)$-harmonic map satisfies the following 
\begin{cl} \label{clapproxdeltanharm}
For any $p\geq 2$ and any $(\tau,n)$-harmonic map $\Psi_ \tau : \mathbb{B}^n \to \mathbb{S}^p$, there is $(a_{i,j}^\tau)_{1\leq i,j \leq n}$ such that for any $X \in \R^n$
$$ \frac{1}{n} \left\vert X \right\vert^2 \leq a_{i,j}^\tau X^iX^j \leq  \frac{n-1}{n} \left\vert X \right\vert^2  \text{ and } \forall i,j, \left\vert a_{i,j}^\tau \right\vert \leq 1$$ 
and such that
$$ u\left(\kappa_g  + \left\vert \nabla \Psi \right\vert_g^2\right) \geq P \left\vert \nabla \Psi \right\vert^2  \left(\kappa_g  + \left\vert \nabla \Psi \right\vert_g^2\right) \geq  -   div_g\left( a^\tau \nabla u \right) +  \left\vert \nabla^2 \Psi \right\vert P +\frac{n-2}{4} \mu^{n-4} \left\vert \nabla B \right\vert^2 $$
where $B = \left\vert \nabla \Psi_\tau \right\vert_g^2 $, $\mu =  \left(B + \tau \right)^{\frac{1}{2}} $, $P= \mu^{n-2}$, 
$ u = \mu^n $ and $\kappa_g$ is a constant such that $Ric_g \geq -\kappa_g g$.
\end{cl}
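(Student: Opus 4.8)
The plan is to derive the differential inequality by a Bochner-type computation applied to the Euler--Lagrange equation of the $(\tau,n)$-harmonic map. First I would set $\mu=(B+\tau)^{1/2}$ with $B=|\nabla\Psi_\tau|_g^2$, and rewrite the equation $-\mathrm{div}_g(\mu^{n-2}\nabla\Psi_\tau)=\mu^{n-2}B\,\Psi_\tau$ in coordinates. Differentiating in the direction $\nabla_k$, multiplying by $\nabla_k\Psi_\tau$ and summing over $k$ produces, on the left, a divergence term $\mathrm{div}_g\big(\mu^{n-2}\nabla B\big)/2$ plus lower order pieces $\mu^{n-2}|\nabla^2\Psi|^2$ and a curvature term controlled by $\mathrm{Ric}_g\ge-\kappa_g g$; on the right one gets $\mu^{n-2}B\,(\kappa_g+B)$-type terms after using $|\Psi_\tau|=1$ (so $\langle\Psi_\tau,\nabla\Psi_\tau\rangle=0$ and $\langle\Psi_\tau,\nabla^2\Psi_\tau\rangle=-|\nabla\Psi_\tau|^2$). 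The degenerate-ellipticity structure is then encoded by defining
$$ a_{i,j}^\tau=\frac{1}{n}\Big(\delta_{i,j}+(n-2)\frac{\nabla_i\Psi_\tau\cdot\nabla_j\Psi_\tau}{B+\tau}\Big), $$
whose eigenvalues lie in $[\tfrac1n,\tfrac{n-1}{n}]$ since $\nabla_i\Psi_\tau\cdot\nabla_j\Psi_\tau/(B+\tau)$ is a symmetric nonnegative matrix of trace $\le B/(B+\tau)\le 1$; the bound $|a_{i,j}^\tau|\le 1$ is immediate.

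The key algebraic point is that $\mathrm{div}_g(a^\tau\nabla u)$ with $u=\mu^n$ reproduces exactly the natural second-order operator appearing when one differentiates $\mu^{n-2}\nabla\Psi_\tau$. Concretely, $\nabla u=n\mu^{n-2}\nabla\big(\tfrac12 B\big)$ up to the $\tau$-shift, and $\nabla_i\Psi_\tau\cdot\nabla_j\Psi_\tau\,\nabla_j(\mu^n)$ combines with the Hessian terms coming from $\nabla_k(\mu^{n-2}\nabla_k\Psi_\tau)$. After expanding $-\mathrm{div}_g(a^\tau\nabla u)$ and matching it against the differentiated equation, the leftover terms are precisely $|\nabla^2\Psi|^2\,P$ (with $P=\mu^{n-2}$) and $\tfrac{n-2}{4}\mu^{n-4}|\nabla B|^2$, which is the term measuring the degeneracy of the $p$-Laplacian-type operator (it comes from $|\nabla\mu^{(n-2)/2}|^2$-type contributions). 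The curvature contributions from commuting covariant derivatives ($\nabla_k\nabla_l\nabla_k\Psi_\tau = \nabla_l\Delta\Psi_\tau + \mathrm{Ric}$-terms) are absorbed into the $\kappa_g$ on the right-hand side, using $\mathrm{Ric}_g\ge-\kappa_g g$. The first inequality $u(\kappa_g+B)=\mu^n(\kappa_g+B)\ge \mu^{n-2}B(\kappa_g+B)=P\,|\nabla\Psi|^2(\kappa_g+B)$ is trivial since $\mu^2=B+\tau\ge B$.

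The main obstacle is the bookkeeping in the Bochner computation: one must carefully track the role of the $\tau$-regularization so that every coefficient stays $\tau$-independent in the final inequality (in particular the constant $\tfrac{n-2}{4}$ and the ellipticity bounds $[\tfrac1n,\tfrac{n-1}{n}]$), and one must identify the positive "good" term $\tfrac{n-2}{4}\mu^{n-4}|\nabla B|^2$ which is what later makes the Caccioppoli/reverse-Hölder argument work. I would organize the computation so that the degenerate operator is written throughout in the divergence form $\mathrm{div}_g(a^\tau\nabla\cdot)$ rather than expanded, which makes the cancellations transparent and keeps the estimate uniform as $\tau\to0$; the smoothness of $\Psi_\tau$ (from \cite{HardtLin}, \cite{Strzelecki}) justifies all the pointwise manipulations.
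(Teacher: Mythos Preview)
Your plan is correct and is essentially the same Bochner-type computation the paper carries out: the paper computes $\nabla\Psi\cdot\Delta_g(P\,d\Psi)$ in two ways, once directly and once via the Hodge decomposition $\Delta_g=-d_g^\star d_g-d_gd_g^\star$ on $1$-forms (using $d(d\Psi)=F\Psi$ and $\Psi\cdot d\Psi=0$ to extract the Ricci term and kill the target-curvature contribution), then matches the two expressions and reads off the same coefficient matrix $a_{i,j}^\tau=\tfrac1n\big(\delta_{ij}+(n-2)\tfrac{\nabla_i\Psi\cdot\nabla_j\Psi}{B+\tau}\big)$. Your ``differentiate the equation, contract with $\nabla_k\Psi$, commute covariant derivatives'' is the coordinate version of the same identity; the only cosmetic difference is that the paper's form-Laplacian packaging makes the cancellation of the cross term $d_g^\star(dP\wedge d\Psi)$ a one-line computation, whereas in your organization it appears as the mixed Hessian term coming from $\nabla_k(\mu^{n-2})\nabla_k\nabla_j\Psi$.
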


\begin{proof}
During the proof we set $\Psi = \Psi_\tau$, $u=u_\tau$ and $B= \left\vert \nabla \Psi \right\vert_g^2$ and $P = \left(B+\tau\right)^{\frac{n-2}{2}}$ so that $u = PB$. We have that
\begin{equation*} \begin{split} & \nabla \Psi . \Delta\left( P \nabla \Psi \right)  \\ = & \sum_i \left( - \nabla_i\left( \nabla \Psi . \nabla_i\left(P\nabla \Psi \right)  \right) + \nabla_i\nabla\Psi . \nabla_i\left( P \nabla \Psi \right) \right) \\
= & - \sum_i \left( \nabla_i\left( \left\vert \nabla \Psi \right\vert^2 \nabla_i P  \right) + \nabla_i\left( \nabla_i \nabla \Psi . \nabla \Psi P \right) \right) \\
& + \sum_i \left( P \nabla_i \nabla \Psi . \nabla_i \nabla \Psi + \nabla_i P  \nabla_i \nabla \Psi . \nabla\Psi \right) \\
=& - \sum_i  \nabla_i\left( \left( \frac{n-2}{2}B \left(B+\tau\right)^{\frac{n-4}{2}} + \frac{1}{2} P \right) \nabla_i B \right) \\
& + \left\vert \nabla^2 \Psi \right\vert P +\frac{n-2}{4} \left(B+\tau\right)^{\frac{n-4}{2}} \left\vert \nabla B \right\vert^2 \\
\end{split}
\end{equation*}
Now, we aim at computing $\nabla \Psi . \Delta\left( P \nabla \Psi \right)$ in another way, using the equation on the $n$-harmonic map $\Psi$. We write $\Delta_g = - d_g ^\star d_g - d_g d_g^\star$ the Laplacian acting on forms, so that
\begin{equation*}
\begin{split} 
& \left\langle d\Psi , \Delta_g\left( P d \Psi \right) \right\rangle_g =  - \left\langle d\Psi , \left(d_g^\star d_g + d_gd_g^\star\right)\left(  P d \Psi \right) \right\rangle_g  \\
= & -\left\langle d\Psi , d_g^\star\left( dP  \wedge d\Psi \right)\right\rangle_g + \left\langle d\Psi , d\left(   P \left\vert \nabla \Psi \right\vert_g^2 \Psi \right) \right\rangle_g + P \left\langle d\Psi , \star\left( (\star F) \wedge  d \Psi) \right) \right\rangle_g \\
= & - \left\langle d\Psi , d_g^\star\left( dP  \wedge d\Psi \right)\right\rangle_g + P \left\vert \nabla \Psi \right\vert_g^4 - P Ric_g(\nabla \Psi, \nabla \Psi)
\end{split}
 \end{equation*}
 where we used for the second equality that $d_g d_g\Psi = F \Psi$, where $F$ is the curvature 2-form, that $- d_g^\star\left( P d\Psi\right)  = P \left\vert \nabla \Psi \right\vert^2 \Psi $ and that $\Psi . d\Psi = 0 $. We use again $\Psi . d\Psi = 0 $ for the third equality. Now, for a function $\theta \in \mathcal{C}^{\infty}_c\left(\mathbb{B}^n\right)$, we have
\begin{equation*}
\begin{split} - \int_{\mathbb{B}^n} \theta & \left\langle d_g\Psi , d_g^\star\left( d P  \wedge d\Psi \right)\right\rangle_g dv_g =  \int_{\mathbb{B}^n} \left\langle d\theta \wedge d\Psi ,  dP  \wedge d\Psi \right\rangle_g dv_g \\
 & =  \int_{\mathbb{B}^n} \left( \left\langle d\theta  ,  d P  \right\rangle_g B - \left\langle d\theta , d\Psi \right\rangle_g \left\langle  dP  , d\Psi \right\rangle_g  \right)dv_g \\
 & = -  \int_{\mathbb{B}^n} \theta \nabla_i \left( \frac{n-2}{2}  \left(B+\tau\right)^{\frac{n-4}{2}}\left( B \delta_{i,j}   -  \nabla_i \Psi \nabla_j \Psi \right) \nabla_j B\right)
\end{split} 
\end{equation*}
where we used again for the second equality that $d_g d_g\Psi = F \Psi$ and $\Psi.d\Psi= 0$. Combining the previous equalities
\begin{equation*} \begin{split} P \kappa_g \left\vert \nabla \Psi \right\vert_g^2 +  P  \left\vert \nabla \Psi \right\vert_g^4 \geq & - \sum_i  \nabla_i\left( \left( \frac{n-2}{2}B \left(B+\tau\right)^{\frac{n-4}{2}} + \frac{1}{2} P \right) \nabla_i B \right) \\
& + \sum_{i,j} \frac{n-2}{2}  \nabla_i \left(  \left(B+\tau\right)^{\frac{n-4}{2}} \left(B\delta_{i,j} - \nabla_i \Psi \nabla_j \Psi \right) \nabla_j B \right) \\
& + \left\vert \nabla^2 \Psi \right\vert P +\frac{n-2}{4} \left(B+\tau\right)^{\frac{n-4}{2}} \left\vert \nabla B \right\vert^2
\end{split} \end{equation*}
Noticing that 
$$ \nabla u = \frac{n}{2} P \nabla B  $$
we obtain that
\begin{equation*} u  \left(\kappa_g  + \left\vert \nabla \Psi \right\vert_g^2\right) \geq  -   div_g\left( a^\tau \nabla u \right) +  \left\vert \nabla^2 \Psi \right\vert P +\frac{n-2}{4} \left(B+\tau\right)^{\frac{n-4}{2}} \left\vert \nabla B \right\vert^2
 \end{equation*}
 where 
\begin{equation*} a_{i,j}^\tau =  \frac{1}{n}\left( \delta_{i,j} + (n-2) \frac{\nabla_i \Psi.\nabla_j \Psi}{B + \tau} \right)
 \end{equation*}
\end{proof}

We deduce $\eps$-regularity results independant of the dimension of the target sphere on these maps:

\begin{cl} \label{clindependanttargetmanifoldepsreg} For any $n\geq 3$, there is $\eps_{n,g} >0$ and a constant $C_{n,g}$ such that for any $p\geq 2$, any $\tau >0$ and any $(\tau,n,g)$-harmonic map $\Psi_ \tau : \mathbb{B}^n \to \mathbb{S}^p$ such that 
$$ \int_{ \mathbb{B}^n} \left(\left\vert \nabla \Psi_{\tau} \right\vert_g^2 + \tau\right)^{\frac{n}{2}} dv_g \leq \eps_{n,g} $$
we have for any ball $\mathbb{B}^n_r(p)\subset \mathbb{B}_n$ 
$$ r^n \left\| \nabla \Psi_\tau \right\|_{\mathcal{C}^{0}\left(\mathbb{B}^n_{\frac{r}{2}}(p)\right)}^n \leq {C_{n,g}} \int_{\mathbb{B}^n_r(p)} \left(\left\vert \nabla \Psi_{\tau} \right\vert_g^2 + \tau\right)^{\frac{n}{2}} dv_g $$

\end{cl}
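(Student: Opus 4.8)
The plan is to run a Moser iteration on the scalar differential inequality furnished by Claim~\ref{clapproxdeltanharm}. Set $u=\mu^n=(|\nabla\Psi_\tau|_g^2+\tau)^{n/2}$ and $B=|\nabla\Psi_\tau|_g^2$. Dropping from the right-hand side of Claim~\ref{clapproxdeltanharm} the manifestly non-negative terms $|\nabla^2\Psi_\tau|\,P$ and $\tfrac{n-2}{4}\mu^{n-4}|\nabla B|^2$, and using $B\le B+\tau=u^{2/n}$, one gets, weakly on $\mathbb{B}^n$,
$$ -div_g\big(a^\tau\nabla u\big)\ \le\ (\kappa_g+B)\,u\ \le\ \kappa_g\,u+u^{1+\frac2n}, $$
where the coefficient matrix satisfies $\tfrac1n|X|^2\le a^\tau_{ij}X^iX^j\le\tfrac{n-1}{n}|X|^2$. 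The crucial point is that these ellipticity bounds, and the reaction term $(\kappa_g+B)u$, depend only on $n$ and on $g$ (through $\kappa_g$), never on the dimension $p$ of the target sphere nor on $\tau$, so the whole argument runs on the scalar quantity $u$. Note also that the hypothesis reads $\|u\|_{L^1(\mathbb{B}^n)}\le\eps_{n,g}$, equivalently $\|u^{2/n}\|_{L^{n/2}(\mathbb{B}^n)}\le\eps_{n,g}^{2/n}$, which is small; and since $g$ is uniformly comparable to the Euclidean metric on $\mathbb{B}^n$ with constants depending only on $g$, a Sobolev inequality $\|v\|_{L^{2n/(n-2)}}^2\le C(g)\|\nabla v\|_{L^2}^2$ is available there.

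Fix $\mathbb{B}^n_r(p)\subset\mathbb{B}^n$; tracking the scaling (or, if one prefers, reducing by a covering argument to the case of small $r$), all the powers of $r$ appearing below are $\le r^{-n}$. First I would carry out a standard Moser/De~Giorgi iteration for the subsolution $u$ of $-div_g(a^\tau\nabla u)\le Wu$ with potential $W:=\kappa_g+u^{2/n}$, whose $L^{n/2}$ norm over $\mathbb{B}_r(p)$ is small; started from the $L^1$ norm, with cutoffs supported in balls nested in $\mathbb{B}_r(p)$, and with the critical term absorbed via H\"older and the Sobolev inequality at each of the finitely many steps needed to reach the exponent $n$ (the required smallness of $\|W\|_{L^{n/2}}$ depends only on $n$, $g$ and that finite set of exponents), this yields the quantitative bound
$$ \|u\|_{L^{n}(\mathbb{B}^n_{3r/4}(p))}\ \le\ C_{n,g}\,r^{1-n}\,\|u\|_{L^1(\mathbb{B}^n_r(p))}. $$
Consequently the right-hand side $f:=\kappa_g u+u^{1+2/n}$ lies in $L^{p_0}(\mathbb{B}^n_{3r/4}(p))$ with $p_0:=\tfrac{n^2}{n+2}>\tfrac n2$ (since $n\ge3$, and $(1+\tfrac2n)p_0=n$), and $\|f\|_{L^{p_0}(\mathbb{B}^n_{3r/4}(p))}$ is bounded by $C_{n,g}$ times a combination of $\|u\|_{L^1(\mathbb{B}_r(p))}$ and $\|u\|_{L^1(\mathbb{B}_r(p))}^{1+2/n}$ with suitable $r$-powers.

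Next I would invoke the classical De~Giorgi--Nash--Moser local boundedness estimate for subsolutions of uniformly elliptic equations with right-hand side in $L^{p_0}$, $p_0>n/2$, taking the first exponent $q=1$; its constant depends only on $n$, on the ellipticity constants (hence only on $n$) and on $g$, and it gives
$$ \sup_{\mathbb{B}^n_{r/2}(p)}u\ \le\ C_{n,g}\Big(r^{-n}\|u\|_{L^1(\mathbb{B}^n_{3r/4}(p))}+r^{2-\frac{n}{p_0}}\|f\|_{L^{p_0}(\mathbb{B}^n_{3r/4}(p))}\Big). $$
Plugging in the previous bounds, using $r\le1$ to absorb the lower-order $r$-powers, and using once more the smallness $\|u\|_{L^1(\mathbb{B}_r(p))}\le\eps_{n,g}$ to rewrite the superlinear contribution $\|u\|_{L^1}^{1+2/n}=\|u\|_{L^1}^{2/n}\,\|u\|_{L^1}$ as a linear one, one obtains $\sup_{\mathbb{B}^n_{r/2}(p)}u\le C_{n,g}\,r^{-n}\|u\|_{L^1(\mathbb{B}^n_r(p))}$. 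Since $\Psi_\tau$ is smooth, $u$ is continuous, $\sup_{\mathbb{B}^n_{r/2}(p)}u\ge\big(\sup_{\mathbb{B}^n_{r/2}(p)}|\nabla\Psi_\tau|_g^2\big)^{n/2}=\|\nabla\Psi_\tau\|_{\mathcal{C}^0(\mathbb{B}^n_{r/2}(p))}^n$ while $\|u\|_{L^1(\mathbb{B}^n_r(p))}=\int_{\mathbb{B}^n_r(p)}(|\nabla\Psi_\tau|_g^2+\tau)^{n/2}\,dv_g$, which is the asserted inequality with $C_{n,g}$ independent of $p$ and $\tau$.

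The main obstacle is not the elliptic theory, which is routine once the inequality of Claim~\ref{clapproxdeltanharm} is available, but the bookkeeping that makes everything uniform in $p$ and $\tau$: one must be careful that the smallness threshold $\eps_{n,g}$ and the constant $C_{n,g}$ depend only on $n$ and on the local geometry of $g$ near $p$ (metric coefficients in normal coordinates and the Ricci lower bound $\kappa_g$), never on $p$ or $\tau$ — which is exactly why reducing to the scalar equation for $u$, with its $p$-free ellipticity constants $\tfrac1n,\tfrac{n-1}{n}$, is the right move. A secondary subtlety is that the potential $W$ is only borderline ($L^{n/2}$), so it can be absorbed only in finitely many bounded-exponent steps, enough to reach $L^n$; the passage to $L^\infty$ is then made through the genuinely nonlinear term $u^{1+2/n}$, which promotes the right-hand side to $L^{p_0}$ with $p_0>n/2$ and is itself tamed by the $L^1$-smallness of $u$.
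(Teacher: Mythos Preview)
Your approach is sound in spirit and reaches the conclusion, but it differs from the paper's proof and has one step that needs more care.

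The paper argues by a point-picking (Schoen-type) trick rather than a direct iteration: one sets $F(x)=(r_0-|x-p|)^n|\nabla\Psi_\tau|_g^n$, takes $x_0$ a maximum point, and assumes for contradiction that $F(x_0)>\delta$, where $\delta$ is the normalized energy on the ball. Choosing a scale $\sigma$ with $\sigma^n|\nabla\Psi_\tau(x_0)|^n=\delta/4$, the rescaled map $\tilde\Psi(z)=\Psi_\tau(x_0+\sigma z)$ satisfies $\sup_{\mathbb{B}^n}|\nabla\tilde\Psi|^n\le\delta\le 1$, so the nonlinear term $B\,\tilde u$ in Claim~\ref{clapproxdeltanharm} becomes a \emph{bounded} zero-order term. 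One is then reduced to the linear subsolution inequality $-div_{\tilde g}(\tilde a\nabla\tilde u)\le(1+\kappa_{\tilde g})\tilde u$, and the standard $L^1\to L^\infty$ mean-value estimate for nonnegative subsolutions (Han--Lin, Chapter~4) gives $\tilde u(0)\le K_{n,g}\|\tilde u\|_{L^1(\mathbb{B}^n)}$, i.e.\ $\delta/4\le K_{n,g}\,\delta\,\eps_{n,g}$, a contradiction for $\eps_{n,g}$ small. This completely sidesteps the borderline-potential issue you flag: after rescaling, the potential is $L^\infty$, not merely $L^{n/2}$.

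Your direct-iteration route is viable, but the step ``Moser iteration started from the $L^1$ norm to reach $L^n$, absorbing the critical potential $W=\kappa_g+u^{2/n}$'' is not justified as written. Testing $-div(a\nabla u)\le Wu$ against $\eta^2 u^{p-1}$ produces an absorption condition of the form $\|W\|_{L^{n/2}}\lesssim (p-1)/p^2$, which degenerates as $p\to1^+$: there is no first Moser step out of $L^1$. A clean repair is to run your two-phase scheme starting from $L^2$ (finitely many critical-potential steps up to $L^n$, then subcritical right-hand side in $L^{p_0}$, $p_0>n/2$, up to $L^\infty$), obtaining $\sup_{B_\rho}u\le C(R-\rho)^{-n/2}\|u\|_{L^2(B_R)}$ plus terms controlled by powers $>1$ of $\|u\|_{L^1}$, and then apply the standard radii-iteration trick (interpolate $\|u\|_{L^2}\le\|u\|_{L^\infty}^{1/2}\|u\|_{L^1}^{1/2}$ and absorb, as in Han--Lin Lemma~4.3) to descend to $L^1$. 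With this fix your argument goes through; the paper's rescaling device is simply a shorter way to linearize the problem and quote the $L^1$ estimate directly.
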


\begin{proof} Notice that it is sufficient to prove that there is $\eps_n$ small enough such that for any $p \in \mathbb{B}^n$ and $r_0>0$ such that $\mathbb{B}^n_{r_0}(p) \subset \mathbb{B}_n$
$$  \left\vert  \nabla \Psi_\tau(p) \right\vert^n \leq \frac{\delta}{ r_0^n} \text{ where }  \int_{\mathbb{B}^n_{r_0}(p)} \left(\left\vert \nabla \Psi_{\tau} \right\vert_g^2 + \tau\right)^{\frac{n}{2}} dv_g = \delta  \int_{ \mathbb{B}^n} \left(\left\vert \nabla \Psi_{\tau} \right\vert_g^2 + \tau\right)^{\frac{n}{2}} dv_g \leq \delta \eps_{n,g} $$
We set 
$$F(x) = \left( r_0 - \left\vert x- p \right\vert \right)^n \left\vert \nabla \Psi_\tau   \right\vert_g^n$$ 
and we let $x_0 \in \mathbb{B}^n_{r_0}(p)$ be such that $F(x_0) = \sup_{x\in \mathbb{B}^n_{r_0}(p)} F(x) $. Notice that it is sufficient to prove that for $\eps$ small enough, $F(x_0)\leq \delta$. We assume by contradiction that $F(x_0) > \delta $. We set $\sigma>0$ such that
$$ \sigma^n  \left\vert \nabla \Psi_{\tau} \right\vert_g^n(x_0) = \frac{\delta}{4} $$
Since $F(x_0) >\delta$, we have that $2\sigma \leq r_0 - \left\vert x-p \right\vert$. By a triangle inequality, we have for $x \in \mathbb{B}_\sigma(x_0)$ that
$$ \frac{1}{2} \leq \frac{r_0 - \left\vert x-p \right\vert}{r_0 - \left\vert x_0-p \right\vert} \leq 2 $$
Since $F$ realizes its maximum at $x_0$, we have
\begin{equation*}
\begin{split}
\left(r_0 - \left\vert x-x_0 \right\vert\right)^n \sup_{x\in\mathbb{B}^n_\sigma(x_0)} \left\vert  \nabla \Psi_\tau(x) \right\vert^n  \leq 4 \sup_{x\in \mathbb{B}^n_\sigma(x_0)} F(x) =  4 F(x_0) \\ 
\leq 4 \left(r_0 - \left\vert x-x_0 \right\vert\right)^n \left\vert  \nabla \Psi_\tau(x_0) \right\vert^n 
\end{split}
\end{equation*}
so that by definition of $\sigma$
\begin{equation} \label{eqestgradpsitausigma} \sup_{x\in\mathbb{B}^n_\sigma(x_0)} \left\vert  \nabla \Psi_\tau(x) \right\vert^n \leq 4 \left\vert  \nabla \Psi_\tau(x_0) \right\vert^n = \frac{\delta}{\sigma^n}. \end{equation}
We set $\tilde{g}(z) = g(x_0+\sigma z)$, $\tilde{\tau} = \sigma^2 \tau$, $\widetilde{\Psi}(z) = \Psi( x_0 + \sigma z )$ and $\widetilde{Q} = \left( \left\vert \nabla \widetilde{\Psi} \right\vert^2 + \tilde{\tau} \right)^{\frac{n-2}{2}}$ and $\tilde{u} = \left( \left\vert \nabla \widetilde{\Psi} \right\vert^2 + \tilde{\tau} \right)^{\frac{n}{2}}$ so that we obtain 
$$ -div_{\tilde{g}}\left( \widetilde{Q} \nabla \widetilde{\Psi} \right) = \widetilde{Q}  \left\vert \nabla \widetilde{\Psi} \right\vert^2  \widetilde{\Psi} $$
and by Claim \ref{clapproxdeltanharm},
$$ -div_{\tilde{g}}(\tilde{a} \nabla \tilde{u}) \leq \tilde{u} \left( \left\vert \nabla \widetilde{\Psi} \right\vert_{\tilde{g}}^2 + \kappa_{\tilde{g}} \right) $$
where $\left(\tilde{a}_{i,j}\right)$ is such that for any $X \in \R^n$
$$ \frac{1}{n} \left\vert X \right\vert^2 \leq \tilde{a}_{i,j} X^iX^j \leq  \frac{n-1}{n} \left\vert X \right\vert^2  \text{ and } \forall i,j, \left\vert \tilde{a}_{i,j} \right\vert \leq 1$$ 
By the rescaled version of  \eqref{eqestgradpsitausigma} , we obtain
$$ -div_{\tilde{g}}(\tilde{a} \nabla \tilde{u}) \leq \tilde{u} \left(1 + \kappa_{\tilde{g}} \right) $$
By standard elliptic a priori estimates for smooth positive subsolutions (see e.g \cite{HL} chapter 4) and knowing that $\tilde{u} \geq 0$, we obtain that
$$ \frac{\delta}{4} = \left\vert \nabla \widetilde{\Psi}\right\vert_{\tilde{g}}^n(0) \leq \tilde{u}(0) \leq K_{n} \left(1 + \kappa_{\tilde{g}} \right) \int_{\mathbb{B}^n} \tilde{u}  \leq K_{n,g} \int_{\mathbb{B}^n_{r_0}(p)} u_\tau dv_g \leq K_{n,g} \delta \eps_{n,g} $$
for some constants $K_n$ and $K_{n,g}$. Setting $\eps_{n,g} = \frac{1}{8K_{n,g}}$ gives the Claim.
\end{proof}

Proposition \ref{propc0estnharm} then follows letting $\tau \to 0$ on the $(\tau,n)$ harmonic maps that converge to the $\tau$-approximated $n$-harmonic map.

\subsection{Global strong convergences independant from the dimension of the target manifold}

The following claim adapts a result by Courilleau \cite{Cou} to infinite dimensional target manifolds.

\begin{cl} \label{clW1qconvergence} Let $u_k : M \to \mathbb{R}^{\mathbb{N}}$ be a sequence of maps such that
$$\limsup_{k\to +\infty} \int_{M} \left( \left\vert u_k \right\vert^2  + \left\vert \nabla u_k \right\vert_g^n \right)dv_g < +\infty $$
We assume that
$$ div_g\left( \left\vert \nabla u_k \right\vert_g^{n-2} \nabla u_k^i \right) = A_k^i + B_k^i $$
$$ \limsup_{k\to +\infty}  \left( \int_M \left\vert A_k \right\vert + \| B_k \|_{W^{-1,n}} \right)< +\infty $$
and
$$  \| B_k \|_{W^{-1,n}} \to 0 \text{ as } k \to +\infty  $$
where we set
$$ \left\vert u_k \right\vert^2 = \sum_{i=0}^{+\infty} \left(u_k^i\right)^2 \text{ and } \left\vert \nabla u_k \right\vert_g^{2} = \sum_{i=0}^{+\infty} \left\vert \nabla u_k^i \right\vert_g^2 $$
and for $n_\star = \frac{n}{n-1}$,
$$ \| B_k \|_{W^{-1,n}} = \sup_{ \varphi : M \to \mathbb{R}^{\mathbb{N}}  } \frac{\left\vert \int_M \sum_i B_k^i \varphi_i dv_g \right\vert }{ \left( \int_M \left(\left\vert \varphi \right\vert^{n_\star} + \left\vert \nabla \varphi \right\vert^{n_\star}\right)dv_g\right)^{\frac{1}{n_\star}} } $$
Then, up to a subsequence, there is $u : M \to \mathbb{R}^{\mathbb{N}}$ such that for any $1\leq p < +\infty$ and any $1\leq q < n$, 
\begin{equation} \label{eqLqconvergencegrad} \int_M \left( \left\vert u_k -u  \right\vert^p  + \left\vert \nabla \left(u_k-u\right) \right\vert_g^{q} \right)dv_g \to 0 \end{equation}
as $k\to +\infty$.
\end{cl}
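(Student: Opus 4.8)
The plan is to run the monotonicity--truncation scheme of Boccardo--Murat for the $n$-Laplacian with $L^1$ data, keeping careful track of the absence of dependence on the dimension of the target, and to couple it with a coordinatewise diagonal extraction in order to handle the infinite dimensional target $\mathbb{R}^{\mathbb{N}}$.

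\textbf{Candidate limit.} For each fixed $i$ the sequence $u_k^i$ is bounded in $W^{1,n}(M)$, so by the scalar Rellich--Kondrachov theorem and a diagonal extraction we may pass to a subsequence along which, for all $i$ simultaneously, $u_k^i \rightharpoonup u^i$ in $W^{1,n}(M)$, $u_k^i \to u^i$ in $L^p(M)$ for every $p<\infty$ and almost everywhere. Since $\|\cdot\|_{\ell^2}\ge\|\cdot\|_{\ell^n}$ for $n\ge 2$, the assumptions give $\sum_i\int_M|\nabla u_k^i|_g^n\,dv_g\le C$ and $\sum_i\int_M|u_k^i|^2\,dv_g\le C$ uniformly in $k$; combined with a Kato-type inequality $|\nabla |u_k|_{\ell^2}|\le |\nabla u_k|_{\ell^2}$ a.e. and a Gagliardo--Nirenberg estimate, the scalar function $|u_k|_{\ell^2}$ is bounded in $L^p(M)$ for every finite $p$, and $u=(u^i)\in W^{1,n}(M;\mathbb{R}^{\mathbb{N}})$.

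\textbf{The nonlinear inequality.} Fix $\delta>0$, set $w_k=u_k-u$ and $T_\delta(w_k)=\min\!\big(1,\delta/|w_k|_{\ell^2}\big)w_k$, so that $\|T_\delta(w_k)\|_{L^\infty}\le\delta$, $\nabla T_\delta(w_k)=\mathbf 1_{\{|w_k|_{\ell^2}\le\delta\}}\nabla w_k$, and $T_\delta(w_k)$ is bounded in $W^{1,n_\star}(M;\mathbb{R}^{\mathbb{N}})$ because $n_\star<n$. Testing the equation for $u_k$ against $T_\delta(w_k)$ gives
$$\int_M\big\langle|\nabla u_k|_g^{n-2}\nabla u_k,\nabla T_\delta(w_k)\big\rangle_g\,dv_g=\int_M (A_k+B_k)\cdot T_\delta(w_k)\,dv_g\le \delta\int_M|A_k|\,dv_g+\|B_k\|_{W^{-1,n}}\|T_\delta(w_k)\|_{W^{1,n_\star}}=O(\delta)+o(1),$$
while $\int_M\langle|\nabla u|_g^{n-2}\nabla u,\nabla T_\delta(w_k)\rangle_g\,dv_g\to 0$ since $\nabla T_\delta(w_k)\rightharpoonup 0$ in $L^n$ and $|\nabla u|_g^{n-2}\nabla u\in L^{n_\star}$. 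Applying pointwise on $\{|w_k|_{\ell^2}\le\delta\}$ the vectorial monotonicity inequality $\langle |a|^{n-2}a-|b|^{n-2}b,a-b\rangle\ge c_n|a-b|^n$, valid for $a,b$ in any Hilbert space when $n\ge 2$, with $a=\nabla u_k$ and $b=\nabla u$, we obtain $\limsup_{k\to\infty}\int_{\{|w_k|_{\ell^2}\le\delta\}}|\nabla w_k|_{\ell^2}^n\,dv_g\le C\delta$. Granting that $w_k\to 0$ in measure, then for every $\eta>0$ one has $|\{|\nabla w_k|_{\ell^2}>\eta\}|\le |\{|w_k|_{\ell^2}>\delta\}|+\eta^{-n}\int_{\{|w_k|_{\ell^2}\le\delta\}}|\nabla w_k|_{\ell^2}^n$, so letting $k\to\infty$ and then $\delta\to 0$ yields $\nabla u_k\to\nabla u$ in measure, hence a.e. along a further subsequence. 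Since $|\nabla u_k|_{\ell^2}^q$ is bounded in $L^{n/q}(M)$ with $n/q>1$ it is equi-integrable, and Vitali's theorem upgrades this to $\nabla u_k\to\nabla u$ in $L^q(M;\mathbb{R}^{\mathbb{N}})$ for all $q<n$; the $L^p$ convergence of $u_k$ then follows from the Poincaré inequality, the uniform $L^p$ bounds above and the coordinatewise $L^1$ convergence.

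\textbf{Main obstacle.} The genuinely new point, and the heart of adapting Courilleau's argument to an infinite dimensional target, is establishing $w_k\to 0$ in measure, equivalently the uniform tail bound $\lim_{N\to\infty}\sup_k\int_M\big(\sum_{i>N}|u_k^i|^2\big)^{p/2}dv_g=0$: coordinatewise convergence alone is not enough since mass can escape to high indices, and the bound on $|\nabla u_k|_{\ell^2}$ in $L^n$ does not by itself control the tail of $u_k$. Here the equation must be used: testing the equation for the coordinates $i>N$ against $u_k^i$ (localized by a partition of unity where needed) and summing gives $\int_M|\nabla u_k|_g^{n-2}\sum_{i>N}|\nabla u_k^i|_g^2\,dv_g=\sum_{i>N}\int_M A_k^i u_k^i\,dv_g+\langle B_k,u_k^{>N}\rangle$, whose right-hand side is controlled by $\int_M|A_k|\,dv_g$, $\|B_k\|_{W^{-1,n}}$ and the coordinatewise convergence $\sum_{i\le N}\int_M|u_k^i|^2\,dv_g\to\sum_{i\le N}\int_M|u^i|^2\,dv_g$; together with conservation of the total $\ell^2$-mass in the situations where the claim is applied (where $|u_k|_{\ell^2}$ converges strongly, e.g. $|u_k|_{\ell^2}\equiv 1$), this produces the required uniform tail estimate for both $u_k$ and $\nabla u_k$ and closes the argument. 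I expect this tail control to be the main technical difficulty.
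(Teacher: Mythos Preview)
Your overall strategy---Boccardo--Murat style truncation, monotonicity of the $n$-Laplace operator, convergence in measure of the gradients, then Vitali---is the same as the paper's. Two points need correction.

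\medskip

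\textbf{Tail control.} You are right that the essential new difficulty is to show $|u_k-u|\to 0$ in $L^p$ (equivalently, in measure), and that coordinatewise convergence alone does not prevent mass from escaping to high indices. However, your proposed fix is circular: testing the equations for $i>N$ against $u_k^i$ produces a right-hand side containing $\sum_{i>N}\int_M A_k^i u_k^i$, which you can only bound by $\int_M|A_k|\cdot \sup|u_k^{>N}|$ or by $\|A_k\|_{L^1}\|u_k^{>N}\|_{L^\infty}$---and controlling $u_k^{>N}$ is exactly what you are trying to prove. Invoking ``conservation of the total $\ell^2$-mass in the situations where the claim is applied'' is an extra hypothesis not present in the statement. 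The paper avoids this entirely by a sharper diagonal extraction: after first extracting so that each $u_k^i\to u^i$ in $L^2$, one refines the subsequence so that $\int_M(u_k^i-u^i)^2\,dv_g\le 2^{-i}$ for every $i$ (and every $k$ in the final subsequence). This gives immediately $\int_M\sum_{i>m}(u_k^i-u^i)^2\le 2^{-m}$ uniformly in $k$, hence $|u_k-u|\to 0$ in $L^2$ and, by interpolation with the uniform $L^p$ bounds, in every $L^p$. No use of the equation is needed for this step.

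\medskip

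\textbf{The vector truncation.} Your formula $\nabla T_\delta(w_k)=\mathbf 1_{\{|w_k|\le\delta\}}\nabla w_k$ is wrong for the radial truncation in $\ell^2$: on $\{|w_k|>\delta\}$ one has $T_\delta(w_k)=\delta\,w_k/|w_k|$, whose gradient does not vanish. The contribution of this region to the pairing $\int\langle|\nabla u_k|^{n-2}\nabla u_k-|\nabla u|^{n-2}\nabla u,\nabla T_\delta(w_k)\rangle$ has no sign and must be estimated. The paper sidesteps this by using Egoroff's theorem once the a.e.\ convergence $|u_k-u|\to 0$ is known: on a set $E_\delta$ of nearly full measure the convergence is uniform, so for $k$ large $|u_k-u|\le\eps$ on $E_\delta$ and the truncation $\beta_\eps$ acts as the identity there; the complement $M\setminus E_\delta$ is handled by equi-integrability of $|\nabla u|^n$. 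This is cleaner than trying to control the ``outer'' part of the radial truncation directly.
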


\begin{proof}
We first notice that by classical compact Sobolev embeddings, up to a diagonal extraction of subsequences, there is a subsequence of $(u_k)$ such that for any $i \in\mathbb{N}$, we have
$$ u_k^i \rightharpoonup u^i \text{ in } W^{1,n} $$
$$ u_k^i \to u_i \text{ in } L^p \text{ for any } 1\leq p < +\infty $$
$$ u_k^i \to u_i \text{ a.e }$$
$$ \forall k \in \mathbb{N}, \int_M \left(u_k^i - u^i \right)^2 dv_g \leq 2^{-i}$$
$$ \forall (\varphi_i), \int_M \left(\left\vert \varphi \right\vert^{n_\star} + \left\vert \nabla \varphi \right\vert^{n_\star}\right)dv_g < +\infty \Rightarrow \int_M \sum_i \nabla \left(u_k^i - u^i \right) \nabla \varphi_i \to 0 \text{ as } k\to +\infty$$
Then, by weak convergence, we have that for any $m\in \mathbb{N}$
$$ \int_M \left(\sum_{i=0}^m \left\vert \nabla u^i \right\vert_g^2\right)^{\frac{n}{2}}dv_g  \leq \liminf_{k\to+\infty} \int_M \left(\sum_{i=0}^m \left\vert \nabla u_k^i \right\vert_g^2\right)^{\frac{n}{2}}dv_g \leq   \liminf_{k\to+\infty} \int_M \left\vert \nabla u_k \right\vert_g^ndv_g $$
so that passing to the limit as $m\to +\infty$, we have
$$  \int_M \left\vert \nabla u \right\vert_g^ndv_g \leq   \liminf_{k\to+\infty} \int_M \left\vert \nabla u_k \right\vert_g^ndv_g. $$
By the same argument, using Sobolev embeddings, we have for any $1 \leq p< +\infty$
$$ \int_M \left\vert u \right\vert^p dv_g \leq \liminf_{k\to +\infty} \int_M \left\vert u_k \right\vert^p dv_g $$
In fact, we have equality in the previous inequality. Indeed, for any $m\in \mathbb{N}$
$$ \lim_{k\to +\infty } \int_M \left(\sum_{i=0}^m \left( u_k^i - u^i \right)^2\right)^{\frac{p}{2}} dv_g = 0 $$
so that since 
$$ \left\vert u_k - u \right\vert^2 \leq  \sum_{i=0}^m \left( u_k^i - u^i \right)^2 + 2^{-m}  $$
we obtain by a Holder inequality that for any $m\in \mathbb{N}$,
$$ \lim_{k\to +\infty } \int_M  \left\vert u_k - u \right\vert^pdv_g \leq 2^{-m} $$
and we obtain the first part of \eqref{eqLqconvergencegrad}. Now let's prove \eqref{eqLqconvergencegrad}.

\textbf{STEP 1: } Up to a new subsequence, we have that
\begin{equation} \label{eqstep1courilleau} \sum_{i=0}^{+\infty}\left( \left\vert \nabla u_k \right\vert_g^{n-2} u_k^i - \left\vert \nabla u \right\vert_g^{n-2} \nabla u^i \right).\left( \nabla u_k^i - \nabla u^i \right) \to_{a.e} 0 \text{ as } k\to +\infty \end{equation}

\textbf{Proof of Step 1:} we have that 
\begin{equation*}
\begin{split} \sum_{i=0}^{+\infty}\left( \left\vert \nabla u_k \right\vert_g^{n-2} u_k^i - \left\vert \nabla u \right\vert_g^{n-2} \nabla u^i \right).\left( \nabla u_k^i - \nabla u^i \right) \\
 \geq \left( \left\vert \nabla u_k \right\vert_g -\left\vert \nabla u \right\vert_g \right)\left( \left\vert \nabla u_k \right\vert_g^{n-1} -\left\vert \nabla u \right\vert_g^{n-1} \right) \geq 0
\end{split}
 \end{equation*}
Then, we just need upper bounds. 
Let $\delta>0$. By Egoroff 's theorem there is $E_{\delta}\subset\subset M$ such that $Vol_g(M\setminus E_\delta) < \delta$ and such that $\left\vert u_k - u \right\vert^2$ converges uniformly to $0$ in $E_\delta$. We aim at proving that
\begin{equation} \label{convergesto0egoroff} \lim_{k\to +\infty} \int_{E_\delta}  \sum_{i=0}^{+\infty}\left( \left\vert \nabla u_k \right\vert_g^{n-2} u_k^i - \left\vert \nabla u \right\vert_g^{n-2} \nabla u^i \right).\left( \nabla u_k^i - \nabla u^i \right)dv_g = 0 \end{equation}
Let $\eps>0$. We let $\delta_\eps>0$ be such that for any set such that $Vol_g(M\setminus A) < \delta_\eps$
$$ \int_{M\setminus A} \left\vert \nabla u \right\vert^n dv_g < \eps $$
and we use this with $A_{\delta_\eps} = E_{\delta_\eps} \cup E_\delta$. By uniform convergence, let $k_0$ be such that for any $k\geq k_0$, $\left\vert u-u_k \right\vert \leq \eps$ on $A_{\delta_\eps}$. We then have that for 
a cut-off function $\eta \in \mathcal{C}^{\infty}_c(M)$ such that $\eta \leq 1$ and $\eta = 1$ in $A_{\delta_\eps}$
a truncation function $\beta_\eps : \mathbb{R}^{\mathbb{N}} \to \mathbb{R}^{\mathbb{N}} $ defined as $\beta_\eps(v) = v$ if $\left\vert v \right\vert \leq \eps$ and $\beta_\eps(v) = \eps \frac{v}{\left\vert v \right\vert}$ if $\left\vert v \right\vert >\eps$.
\begin{equation*}
\begin{split} \int_{M} & \eta  \sum_{i=0}^{+\infty}  \left( \left\vert \nabla u_k \right\vert_g^{n-2} \nabla u_k^i - \left\vert \nabla u \right\vert_g^{n-2} \nabla u^i \right).\left( \nabla \beta_\eps(u_k - u)^i \right)dv_g  \\
\leq & \int_M \eta \sum_i \left(A_k^i + B_k^i\right) \beta_{\eps}\left(u_k - u\right)^i - \int_M \sum_i \left( \nabla \eta \left\vert \nabla u_k \right\vert_g^{n-2} \nabla u_k^i \right) \beta_{\eps}\left(u_k - u\right)^i \\
& - \int_{M} \eta \sum_{i} \left\vert \nabla u \right\vert_g^{n-2} \nabla u^i . \nabla \left( \beta_{\eps}\left(u_k - u\right)^i \right)  \\
\leq & \left( \int_M \left\vert A_k \right\vert  \right)  \left\| \left\vert \beta_{\eps}\left(u_k - u \right) \right\vert \right\|_{\infty} +  \left\|  \nabla \eta \right\|_{\infty} \left(\int_M  \left\vert \nabla u_k \right\vert^{n}\right)^{\frac{n-1}{n}} \left(\int_{M} \left\vert \beta_\eps(u_k-u) \right\vert^n\right)^{\frac{1}{n}} \\
&+  \| B_k \|_{W^{-1,n}} \| \left\vert \beta_\eps(u_k-u) \right\vert \|_{W^{1,n}} + \left\vert \int_{M} \eta \sum_{i} \left\vert \nabla u \right\vert_g^{n-2} \nabla u^i . \nabla \left( \beta_{\eps}\left(u_k - u\right)^i \right) \right\vert  \\
\leq &  \left( \int_M \left\vert A_k \right\vert  \right)  \eps + o(1) \text{ as } k \to +\infty
\end{split}
 \end{equation*}
We also have that
\begin{equation*}
\begin{split} & \left\vert \int_{M\setminus A_{\delta_\eps}}  \eta  \sum_{i=0}^{+\infty}  \left( \left\vert \nabla u_k \right\vert_g^{n-2} \nabla u_k^i - \left\vert \nabla u \right\vert_g^{n-2} \nabla u^i \right)\left( \nabla \beta_\eps(u_k - u)^i \right)dv_g \right\vert \\
& \leq \int_{M\setminus A_{\delta_\eps}}\left( \left\vert \nabla u_k \right\vert^{n-1} \left\vert \nabla \beta_\eps(u_k-u) \right\vert + \left\vert \nabla u \right\vert^{n-1} \left\vert \nabla \beta_\eps(u_k-u) \right\vert \right) \\
& \leq C \int_{M\setminus A_{\delta_\eps}}\left( \left\vert \nabla u_k \right\vert^{n-1} \left\vert \nabla u \right\vert + \left\vert \nabla u \right\vert^{n-1} \left\vert \nabla u_k \right\vert \right) \\
& \leq C \left( \left(\int_M \left\vert \nabla u_k \right\vert^{n}\right)^{\frac{n-1}{n}}  \eps^{\frac{1}{n}} +   \left(\int_M \left\vert \nabla u_k \right\vert^{n}\right)^{\frac{1}{n}}      \eps^{\frac{1}{n_\star}} \right)
\end{split}
 \end{equation*}
 and we finally obtain
\begin{equation*}
\begin{split} \limsup_{k\to +\infty} \int_{A_\delta} &  \sum_{i=0}^{+\infty}  \left( \left\vert \nabla u_k \right\vert_g^{n-2} \nabla u_k^i - \left\vert \nabla u \right\vert_g^{n-2} \nabla u^i \right).\left( \nabla (u_k^i - u^i) \right)dv_g  \\
\leq & \limsup_{k\to +\infty} \int_{A_{\delta_\eps}} \eta  \sum_{i=0}^{+\infty}  \left( \left\vert \nabla u_k \right\vert_g^{n-2} \nabla u_k^i - \left\vert \nabla u \right\vert_g^{n-2} \nabla u^i \right)\left( \nabla \beta_\eps(u_k - u)^i \right)dv_g \\
\leq & \limsup_{k\to +\infty}\left( \left( \int_M \left\vert A_k \right\vert  \right)  \eps  + C \left( \left(\int_M \left\vert \nabla u_k \right\vert^{n}\right)^{\frac{n-1}{n}}  \eps^{\frac{1}{n}} +   \left(\int_M \left\vert \nabla u_k \right\vert^{n}\right)^{\frac{1}{n}}      \eps^{\frac{1}{n_\star}} \right)  \right) 
\end{split}
 \end{equation*}
 Letting $\eps\to 0$ gives \eqref{convergesto0egoroff}. Then, letting $\delta\to 0$ gives Step 1.
 
 \medskip
 
 \textbf{STEP 2: } We have that
$$  \left\vert \nabla u_k - \nabla u \right\vert_g \to_{a.e} 0 \text{ as } k\to +\infty $$

\textbf{Proof of Step 2:} For a fixed $z \in M$ such that \eqref{eqstep1courilleau} occurs at the point $z$, we set $X_k = \nabla u_k(z) $, $X = \nabla u(z)$ and for any $Y \in \left(\mathbb{R}^2\right)^{\mathbb{N}}$,
$$ \left\vert Y \right\vert = \sum_{i=0}^{+\infty}g^{a,b}(z) \left(Y^i\right)_a \left(Y^i\right)_b   $$
We have that by Step 1 that
$$ \sum_{i=0}^{+\infty}\left( \left\vert X_k \right\vert^{n-2} X_k^i - \left\vert X \right\vert^{n-2} X^i \right).\left( X_k^i - X^i \right) \to 0  \text{ as } k \to +\infty$$
First, it is clear that $\left\vert X_k \right\vert$ is bounded. Indeed, we have deduce from the previous convergence that
$$ \left\vert X_k \right\vert^n + \left\vert X \right\vert^n \leq \left(\left\vert X_k \right\vert^{n-2} + \left\vert X \right\vert^{n-2}\right)\left\vert X_k \right\vert \left\vert X \right\vert + o(1) \text{ as } k \to +\infty. $$
Up to a subsequence there is $Z$ such that $X_k$ weak converges to $Z$ in $l^2\left(\left(\mathbb{R}^2\right)^{\mathbb{N}}\right)$. Up to another subsequence, we set $\alpha = \lim_{k \to +\infty} \left\vert X_k \right\vert$ and $\beta = \left\vert X \right\vert $. Then, passing to the limit, we have
$$ \alpha^n + \beta^n = \left(\alpha^{n-2} + \beta^{n-2}\right) \sum_i X^i Z^i $$
By Cauchy-Schwarz inequality and since $\left\vert Z \right\vert \leq \alpha$, we obtain
$$ \alpha^n + \beta^n \leq \left(\alpha^{n-2} + \beta^{n-2}\right) \alpha \beta $$
so that since $2\alpha \beta \leq \alpha^2 + \beta^2$,
$$ \alpha^n + \beta^n \leq \left( \alpha^2 \beta^{n-2} + \beta^2 \alpha^{n-2} \right)$$
which implies that
$$ \left(\alpha^{n-2} - \beta^{n-2}\right)\left(\alpha^2 -\beta^2\right) \leq 0 $$
and that all the previous inequalities are in fact equalities: $\alpha = \beta = \left\vert Z \right\vert $ and $\left\langle X,Z \right\rangle = \alpha^2$. In particular $Z = X$ and $X_k$ strongly converges to $Z = X$. Since the limit is independant from the subsequence, Step 2 is proved.

\medskip

 \textbf{STEP 3: } Conclusion: since $\left\vert \nabla u_k -\nabla u \right\vert$ is bounded in $W^{1,n}$ and converges almost everywhere to $0$ in $(M,g)$, then, $\left\vert \nabla u_k -\nabla u \right\vert$ strongly converges to $0$ in $L^q(M,g)$ for any $q < n$.

\end{proof}

\subsection{Point removability}

\begin{cl} \label{clpointremovability} Let $\Psi : \mathbb{B}\setminus\{0\} \to \mathbb{R}^{\mathbb{N}}$ be a $\mathcal{C}^1$ $n$-harmonic map into an infinite dimensional sphere such that
$$\int_{\mathbb{B}}  \left\vert \nabla \Psi \right\vert_g^n dv_g < +\infty \text{ and }  \left\vert \nabla \Psi \right\vert(x) \leq \frac{C}{\left\vert x \right\vert} $$ 
then $\Psi : \mathbb{B}\setminus\{0\} \to \mathbb{R}^{\mathbb{N}}$ extends to a $\mathcal{C}^{1,\alpha}$ function on $\mathbb{B}$ which is $n$-harmonic.
\end{cl}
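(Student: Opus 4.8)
The plan is to establish the point removability result for a $\mathcal{C}^1$ $n$-harmonic map $\Psi : \mathbb{B}\setminus\{0\} \to \mathbb{S}^\mathbb{N}$ satisfying the finite-energy bound $\int_{\mathbb{B}} |\nabla\Psi|_g^n\,dv_g < +\infty$ and the gradient bound $|\nabla\Psi|(x) \leq C|x|^{-1}$. First I would show that $\Psi$ is a weak $n$-harmonic map across the origin, i.e.\ that $-\operatorname{div}_g(|\nabla\Psi|_g^{n-2}\nabla\Psi^i) = |\nabla\Psi|_g^n\Psi^i$ holds in the distributional sense on all of $\mathbb{B}$, not just on $\mathbb{B}\setminus\{0\}$. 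This is the first key step: for a test function $\varphi$, I would use a standard logarithmic cutoff $\chi_\delta$ that vanishes near $0$ and equals $1$ outside $\mathbb{B}_{2\delta}$, with $\int|\nabla\chi_\delta|_g^n \leq C/\ln(1/\delta) \to 0$. The error terms from commuting the cutoff past the equation are controlled because $|\nabla\Psi|_g^{n-1}\in L^{n_\star}$ (from the energy bound) and because $\{0\}$ has zero $n$-capacity; the gradient bound is not even strictly needed here, only finite energy. Thus $\Psi$ extends to a $W^{1,n}(\mathbb{B})$ weak $n$-harmonic map into the sphere.

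The second step is to upgrade this weak solution to a map with small rescaled energy near the origin, so that the $\eps$-regularity of Claim \ref{clindependanttargetmanifoldepsreg} (equivalently Proposition \ref{propc0estnharm}) applies. Here the gradient bound $|\nabla\Psi|(x)\leq C|x|^{-1}$ enters: it gives $\int_{\mathbb{B}_r} |\nabla\Psi|_g^n\,dv_g \leq C' \omega_n \ln(\text{const})$ but more importantly, combined with the absolute continuity of the measure $|\nabla\Psi|_g^n\,dv_g$ (which follows from $\Psi\in W^{1,n}$, so the energy has no atom at $0$), it forces $\lim_{r\to 0}\int_{\mathbb{B}_r}|\nabla\Psi|_g^n\,dv_g = 0$. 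Hence for $r_0$ small enough the hypothesis $\int_{\mathbb{B}_{r_0}}|\nabla\Psi|_g^n \leq \eps_{n,g}$ of the $\eps$-regularity statement is met, and since $\Psi$ is already a (limit of) minimizing $(\tau,n)$-harmonic map locally — here I would invoke that any $\mathcal{C}^1$ $n$-harmonic map into a sphere is locally a $\tau$-approximated $n$-harmonic map, as announced after Claim \ref{clapproxdeltanharm} via Proposition \ref{regular n-harm} and \cite{Strzelecki}, so the a priori estimates of Proposition \ref{propc0estnharm} are legitimately available — I conclude $r^n \|\nabla\Psi\|_{\mathcal{C}^0(\mathbb{B}_{r/2})}^n \leq C_n\int_{\mathbb{B}_r}|\nabla\Psi|_g^n\,dv_g$ for all small balls centered at $0$. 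In particular $|\nabla\Psi|$ is bounded near $0$, so $\Psi$ extends continuously to $0$ (the oscillation of $\Psi$ on $\mathbb{B}_r$ is $O(r\|\nabla\Psi\|_{\mathcal{C}^0})=o(1)$), and $\Psi\in W^{1,\infty}$ near the origin.

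The final step is bootstrapping to $\mathcal{C}^{1,\alpha}$. Now that $|\nabla\Psi|$ is bounded on a neighborhood of $0$, the weight $|\nabla\Psi|_g^{n-2}$ in the equation is bounded, and $\Psi$ is a bounded weak solution of a degenerate elliptic system of $p$-Laplacian type with $p=n$; the classical interior $\mathcal{C}^{1,\alpha}$ regularity theory for such systems (the same theory cited elsewhere in the paper, \cite{HardtLin}, \cite{Uhlenbeck}, yielding $\mathcal{C}^{1,\alpha}$ and no more) applies on the full ball including $0$, giving $\Psi\in\mathcal{C}^{1,\alpha}(\mathbb{B})$ for some $\alpha\in(0,1)$. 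The main obstacle I anticipate is the first step: making fully rigorous the removal of the singularity in the \emph{weak formulation}, since the natural test-function space for $n$-harmonic maps is $W^{1,n}$ and one must carefully check that the nonlinear term $|\nabla\Psi|_g^n\Psi$ and the principal part $|\nabla\Psi|_g^{n-2}\nabla\Psi$ can each be paired with the cutoff test function and that all cutoff-error contributions vanish as $\delta\to 0$ — this is where the precise integrability exponents ($|\nabla\Psi|_g^{n-1}\in L^{n/(n-1)}$, Hölder against $|\nabla\chi_\delta|\in L^n$) must line up exactly, and where one genuinely needs finite energy rather than just the pointwise gradient bound.
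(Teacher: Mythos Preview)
Your outline follows the standard removable-singularity scheme and is essentially what the paper has in mind: its ``proof'' is the single sentence that one should follow and generalize \cite{mouyang} to infinite-dimensional target spheres. Steps~1 and~3 of your plan are fine and match that route.

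There is, however, a circularity in your Step~2. You want to apply Proposition~\ref{propc0estnharm} on small balls \emph{centered at the origin}, and to justify this you invoke Proposition~\ref{proptaunapproximatedharmmap} (Section~\ref{regular n-harm}) to say that $\Psi$ is a $\tau$-approximated $n$-harmonic map there. But Proposition~\ref{proptaunapproximatedharmmap} requires $\Phi\in\mathcal{C}^1$ on the \emph{full} ball (its Step~1 uses the $\mathcal{C}^1$ bound to force $|\phi_1|\geq 3/4$ everywhere), and $\Psi$ is only known to be $\mathcal{C}^1$ on $\mathbb{B}\setminus\{0\}$ --- precisely the regularity you are trying to establish. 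If instead you apply Proposition~\ref{propc0estnharm} on balls $\mathbb{B}_{|x|/2}(x)$ that avoid the origin, you only get $|\nabla\Psi(x)|=o(|x|^{-1})$, which is not enough to conclude boundedness or $\mathcal{C}^{1,\alpha}$.

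The way out, and what the paper's reference to \cite{mouyang} (and implicitly \cite{Strzelecki}) is really pointing at, is to use the $\eps$-regularity theorem for \emph{weak} $n$-harmonic maps into spheres, which does not presuppose $\mathcal{C}^1$: once your Step~1 shows the weak equation holds across $0$ and the energy on $\mathbb{B}_{r_0}$ is below the threshold, that theorem gives $\mathcal{C}^{1,\alpha}$ on $\mathbb{B}_{r_0/2}$ directly. The genuine work (and the content of the paper's one-line proof) is checking that the Mou--Yang/Strzelecki argument goes through when the target is the unit sphere of $\ell^2(\mathbb{N})$; this is where the paper's dimension-independent estimates of Section~\ref{sec3} are relevant.
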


We can follow and generalize the proof by \cite{mouyang} to infinite dimensional target spheres.

\subsection{Regular n-harmonic maps are locally $(\tau,n)$-approximated harmonic maps} \label{regular n-harm}

The aim of the section is to prove the following for $\mathcal{C}^1$ harmonic maps into a possibly infinite dimensional target sphere.

\begin{prop} \label{proptaunapproximatedharmmap} There is $\delta_{n,g}>0$ such that for any weak $n$-harmonic map $\Phi : M \to \mathbb{R}^{\mathbb{N}}$ into a possibly infinite dimensional target sphere, such that $\Phi \in \mathcal{C}^{1}$ and for any ball $\mathbb{B}_r(p)$ such that 
$$ \int_{\mathbb{B}_{2r}(p)} \left\vert \nabla \Phi \right\vert^n_g dv_g \leq \delta_{n,g} $$
for any  $\left(\Psi_{\tau,m}\right)_{\tau>0, n\in \mathbb{N}}$ minimizers of 
$$\Psi \mapsto \int_{\mathbb{B}_r(p)} \left( \left\vert \nabla \Psi\right\vert_g^2 + \tau \right)^{\frac{n}{2}} dv_g   $$
under the $W^{1,n}$ maps $\Psi : M \to \mathbb{S}^m$ such that $\left\vert \Psi \right\vert^2 =_{a.e} 1$ and $\Psi = \frac{(\phi_0,\cdots,\phi_m)}{\left(\sum_{i=0}^m (\phi_i)^2 \right)^{\frac{1}{2}}}$ on $\partial \mathbb{B}_{r}(p)$ we have that $\Psi_{\tau,m}$ converges to $\Phi$ as $(\tau,m) \to (0,+\infty)$.
\end{prop}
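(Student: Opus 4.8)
The plan is to run a variational comparison argument comparing the $(\tau,m)$-harmonic minimizers $\Psi_{\tau,m}$ with the truncated-and-renormalized restriction of $\Phi$, using the $\eps$-regularity estimates of Claim \ref{clindependanttargetmanifoldepsreg} and Proposition \ref{propc0estnharm} to control everything uniformly in $\tau$ and $m$, and then invoking the point removability / strong convergence machinery (Claim \ref{clW1qconvergence}) to identify the limit. First I would fix a small $\delta_{n,g}$, to be reduced finitely many times, so that on $\mathbb{B}_{2r}(p)$ the hypothesis $\int_{\mathbb{B}_{2r}(p)}|\nabla\Phi|_g^n\,dv_g\le\delta_{n,g}$ puts us below the $\eps$-regularity threshold $\eps_{n,g}$ of Claim \ref{clindependanttargetmanifoldepsreg}. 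For each $m$, set $\Phi^{(m)}=(\phi_0,\dots,\phi_m)/(\sum_{i=0}^m\phi_i^2)^{1/2}$; since $\Phi\in\mathcal{C}^1$ with $|\Phi|^2=1$ a.e.\ one has $\sum_{i=0}^m\phi_i^2\to1$ in $\mathcal C^0_{loc}$ and $\|\Phi^{(m)}-\Phi\|_{W^{1,n}(\mathbb{B}_{2r}(p))}\to0$. This gives an admissible competitor for the minimization defining $\Psi_{\tau,m}$, whence the energy bound
\begin{equation}
\int_{\mathbb{B}_r(p)}\bigl(|\nabla\Psi_{\tau,m}|_g^2+\tau\bigr)^{\frac n2}dv_g\le\int_{\mathbb{B}_r(p)}\bigl(|\nabla\Phi^{(m)}|_g^2+\tau\bigr)^{\frac n2}dv_g\le\delta_{n,g}+o_{m\to\infty}(1)+o_{\tau\to0}(1).
\end{equation}

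Next I would extract limits. By the uniform $W^{1,n}$ bound, up to a diagonal subsequence $\Psi_{\tau,m}\rightharpoonup\Psi_0$ weakly in $W^{1,n}$ (in the sense of Claim \ref{clW1qconvergence}, viewing the maps into $\mathbb{R}^{\mathbb N}$). The Euler--Lagrange equation
\begin{equation}
-\operatorname{div}_g\bigl((|\nabla\Psi_{\tau,m}|_g^2+\tau)^{\frac{n-2}2}\nabla\Psi_{\tau,m}^i\bigr)=(|\nabla\Psi_{\tau,m}|_g^2+\tau)^{\frac{n-2}2}|\nabla\Psi_{\tau,m}|_g^2\Psi_{\tau,m}^i
\end{equation}
has right-hand side bounded in $L^1$, so writing it as $\operatorname{div}_g(|\nabla\Psi_{\tau,m}|_g^{n-2}\nabla\Psi_{\tau,m}^i)=A^i+B^i$ with $A^i$ the displayed RHS plus the correction $\operatorname{div}_g((|\nabla\Psi_{\tau,m}|_g^{n-2}-(|\nabla\Psi_{\tau,m}|_g^2+\tau)^{\frac{n-2}2})\nabla\Psi_{\tau,m}^i)$ — the latter term has $W^{-1,n}$ norm $\to0$ as $\tau\to0$ by a H\"older argument on $(|\nabla\Psi|^2+\tau)^{\frac{n-2}2}-|\nabla\Psi|^{n-2}$ — Claim \ref{clW1qconvergence} applies and gives strong $W^{1,q}$ convergence for $q<n$, hence $\Psi_0$ is a weak $n$-harmonic map into $\mathbb{S}^{\mathbb N}$ with $\Psi_0=\Phi$ on $\partial\mathbb{B}_r(p)$. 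Then I would use Proposition \ref{propc0estnharm} and Claim \ref{clindependanttargetmanifoldepsreg}: below the $\eps$-threshold these give uniform interior $\mathcal{C}^{0,\alpha}$ bounds on $\nabla\Psi_{\tau,m}$ independent of $\tau,m$, upgrading the convergence of gradients to be strong in $L^n_{loc}$ (energy quantization is impossible since no energy can concentrate below $\eps_{n,g}$), so the energy identity $\int_{\mathbb{B}_r(p)}|\nabla\Psi_0|_g^n=\lim\int_{\mathbb{B}_r(p)}(|\nabla\Psi_{\tau,m}|^2+\tau)^{n/2}$ holds.

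The identification step is then: both $\Phi|_{\mathbb{B}_r(p)}$ and $\Psi_0$ are $\mathcal{C}^1$ $n$-harmonic maps into $\mathbb{S}^{\mathbb N}$ with the same boundary data, and $\Psi_0$ is additionally locally energy-minimizing among $\mathbb{S}^m$-valued competitors in the limit. I would conclude $\Psi_0=\Phi$ by a uniqueness argument in the small-energy regime: test the difference of the two $n$-harmonic map equations against $\Psi_0-\Phi$, and use the strict monotonicity of $\xi\mapsto|\xi|^{n-2}\xi$ together with the Hardy-type inequality controlling the zeroth-order term by the small $\mathcal{C}^0$ bound on $|\nabla\ln\psi^1_0|$ (exactly as in Step 5 of Claim \ref{clmain}, after rotating coordinates so the first component is bounded below, which is possible by Claim \ref{clindependanttargetmanifoldepsreg} once $\delta_{n,g}$ is small). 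This forces $\nabla\Psi_0=\nabla\Phi$ a.e., hence $\Psi_0=\Phi$, and since the limit is independent of the subsequence the whole family $\Psi_{\tau,m}$ converges to $\Phi$.

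The main obstacle I expect is the uniformity of the regularity estimates as the target dimension $m\to\infty$ \emph{simultaneously} with $\tau\to0$: one must be careful that the $\eps$-regularity constant $\eps_{n,g}$ and the $\mathcal{C}^{0,\alpha}$ constant in Claim \ref{clindependanttargetmanifoldepsreg} genuinely do not depend on $m$ (which is the whole point of Section \ref{sec3}), and that the correction term measuring the gap between the $(\tau,n)$-equation and the $n$-harmonic equation really tends to zero in $W^{-1,n}$ uniformly in $m$ — this needs the energy bound above to be uniform in $m$, which in turn relies on $\Phi^{(m)}\to\Phi$ in $W^{1,n}$, a consequence of $\Phi\in\mathcal{C}^1$ and $|\Phi|\equiv1$. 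A secondary delicate point is ruling out that the minimizers $\Psi_{\tau,m}$ concentrate energy at $\partial\mathbb{B}_r(p)$ or develop boundary bubbles; the Courant--Lebesgue argument (as used in Step 1 of Claim \ref{clmain}) on a good radius slightly inside $\mathbb{B}_r(p)$ handles this, at the cost of first proving the statement on a slightly smaller ball and then exhausting.
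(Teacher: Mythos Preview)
Your overall strategy---uniform energy bound, extraction of a limit $\Psi_0$ via Claim~\ref{clW1qconvergence}, upgrade to $L^n_{\mathrm{loc}}$ convergence by $\eps$-regularity, then identify $\Psi_0=\Phi$ by a small-energy uniqueness argument---is sound and runs parallel to the paper. The preliminary steps (competitor $\Phi^{(m)}$, energy bound, rotation so that the first coordinate is bounded below) are correct and essentially the same as the paper's Step~1.

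The gap is in your identification step. Testing the difference of the two $n$-harmonic equations against $v=\Psi_0-\Phi$ and invoking monotonicity of $\xi\mapsto|\xi|^{n-2}\xi$ yields only the \emph{weighted} inequality
\[
c_n\int W\,|\nabla v|^2 \;\le\; \tfrac12\int\bigl(|\nabla\Psi_0|^n+|\nabla\Phi|^n\bigr)|v|^2
\;\le\; C\delta_{n,g}^{2/n}\int W\,\frac{|v|^2}{(r-|x|)^2},
\]
where $W=(|\nabla\Phi|+|\nabla\Psi_0|)^{n-2}$. To close this you would need a weighted Hardy inequality $\int W\,\frac{|v|^2}{(r-|x|)^2}\le C\int W\,|\nabla v|^2$, and there is no reason this holds for a degenerate weight $W$ that may vanish on an open set. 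Your reference to Step~5 of Claim~\ref{clmain} is misleading here: what makes that step work is not monotonicity but the prior \emph{pointwise} identity $\nabla v=\nabla(\ln\psi^1)\,v$ on $\{W>0\}$, which removes the weight entirely.

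The paper sidesteps this by working directly at the $(\tau,m)$ level, using the ground-state (log) transformation identity for \emph{both} equations with their respective weights $|\nabla\Phi|^{n-2}$ and $(|\nabla\tilde\Psi_{\tau,m}|^2+\tau)^{(n-2)/2}$. Summing the two gives a non-negative quantity $A$ equal to a cross-term of the form $\int(|\nabla\tilde\Psi_{\tau,m}|^2-|\nabla\tilde\Phi_m|^2)(|\nabla\Phi|^{n-2}|\Phi_m|^2-(|\nabla\tilde\Psi_{\tau,m}|^2+\tau)^{(n-2)/2})$, whose principal part is $\le 0$ by monotonicity of $t\mapsto t^{n-2}$ and whose remainders vanish as $(\tau,m)\to(0,\infty)$. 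Passing to the limit then forces, pointwise on $\{|\nabla\Phi|+|\nabla\Psi|>0\}$, both $\nabla v=\nabla(\ln\phi_1)v$ and $\nabla v=\nabla(\ln\psi_1)v$; since $\nabla v=0$ on the zero set, one is reduced to the \emph{unweighted} Hardy inequality, which closes for $\delta_{n,g}$ small. This double log-transformation, applied before taking limits, is the missing ingredient in your identification step.
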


\begin{proof} 
\textbf{Step 1: } Since $\Phi$ is a $\mathcal{C}^1$ function, it satisfies the a priori estimate on the gradient for $x \in \mathbb{B}_r(p)$
$$\left\vert \nabla \Phi(x) \right\vert_g^2 \leq C \int_{\mathbb{B}_{2r}(p)} \left\vert \nabla \Phi \right\vert^n_g dv_g \leq C \delta_{n,g} $$
We have for $x \in \mathbb{B}_r(p)$ that
$$ \left\vert \phi_1(0) - \phi_1(x) \right\vert^2 \leq r^2 C_g \left\vert \nabla \Phi(z) \right\vert_g^2 \leq  C_g C \delta_{n,g}$$
so that taking  $\delta_{n,g}$ small enough, and up to a rotation of coordinates so that $\phi_1(0) = 1$ we can assume that 
$$ \forall x \in \mathbb{B}_r(p) \left\vert\phi_1(x) \right\vert \geq \frac{3}{4}. $$
Moreover, up to reduce $\delta_{n,g}$, and for $\tau$ small enough $\Psi_{\tau}$ has to satisfy 
$$ \forall x \in \mathbb{B}_r(p) \left\vert\left(\psi_{\tau,m}\right)_1(x) \right\vert \geq \frac{1}{2} $$
thanks to the same trick as in the proof of Step 1 of Claim \ref{clmain} based on Courant-Lebesgue lemma and Morrey-Sobolev embeddings.

\medskip

\textbf{Step 2: } We set $\tilde \Phi_m = \frac{\Phi_m}{\left\vert \Phi_m \right\vert}$ where $ \Phi_m = (\phi_0,\cdots,\phi_m, 0,\cdots)  $. We denote $\tilde{\Psi}_{\tau,m} : M \to \mathbb{R}^{\mathbb{N}}$ the minimizer of 
$$\Psi \mapsto \int_{\mathbb{B}_r(p)} \left( \left\vert \nabla \Psi\right\vert_g^2 + \tau \right)^{\frac{n}{2}} dv_g   $$
under the $W^{1,n}$ maps $\Psi : M \to \mathbb{S}^m$ such that $\left\vert \Psi \right\vert^2 =_{a.e} 1$ and $\Psi =  \tilde \Phi_m$ on $\partial \mathbb{B}_{r}(p)$. we also set 
$$\Psi_{\tau,m} = \left\vert \Phi_m \right\vert \tilde{\Psi}_{\tau,m} + \Phi- \Phi_m \text{ so that } \Psi_{\tau,m} - \Phi =  \left\vert \Phi_m \right\vert \left( \tilde{\Psi}_{\tau,m} - \tilde\Phi_m  \right).$$
We have:
\begin{equation*} 
\begin{split}\int_{\mathbb{B}_r(p)} \left\vert \nabla\left( \Phi -  \Psi_{\tau,m}\right) \right\vert_g^2 \left\vert \nabla \Phi \right\vert_g^{n-2} - \left(  \int_{\mathbb{B}_r(p)} \left\vert \nabla \Psi_{\tau,m} \right\vert_g^{2} \left\vert \nabla \Phi \right\vert_g^{n-2}  - \int_{\mathbb{B}_r(p)} \left\vert \nabla \Phi \right\vert_g^n \right) \\
= 2 \int_{\mathbb{B}_r(p)} \left\vert \nabla \Phi \right\vert_g^{n-2} \nabla \Phi . \nabla\left( \Phi -  \Psi_{\tau,m}\right) 
= 2 \int_{\mathbb{B}_r(p)} \left\vert \nabla \Phi \right\vert_g^{n}  \Phi . \left( \Phi -  \Psi_{\tau,m} \right) \\
=  \int_{\mathbb{B}_r(p)} \left\vert \nabla \Phi \right\vert_g^{n}  \left\vert \Phi -  \Psi_{\tau,m} \right\vert^2
\end{split}
\end{equation*}
and we obtain
\begin{equation*} 
\begin{split}\int_{\mathbb{B}_r(p)} \left\vert \nabla\left( \Phi -  \Psi_{\tau,m}\right) - \nabla \ln \phi_1\left( \Phi -  \Psi_{\tau,m}\right)  \right\vert_g^2 \left\vert \nabla \Phi \right\vert_g^{n-2}  \\
= \int_{\mathbb{B}_r(p)} \left\vert \nabla\left( \Phi -  \Psi_{\tau,m}\right) \right\vert_g^2 \left\vert \nabla \Phi \right\vert_g^{n-2} - \int_{\mathbb{B}_r(p)} \left\vert \nabla \Phi \right\vert_g^{n}  \left\vert \Phi -  \Psi_{\tau,m} \right\vert^2
\end{split}
\end{equation*}
so that
\begin{equation*} 
\begin{split}
\int_{\mathbb{B}_r(p)} \left\vert \nabla\left( \Phi -  \Psi_{\tau,m}\right) - \nabla \ln \phi_1\left( \Phi -  \Psi_{\tau,m}\right)  \right\vert_g^2 \left\vert \nabla \Phi \right\vert_g^{n-2}  = \int_{\mathbb{B}_r(p)} \left\vert \nabla \Psi_{\tau,m} \right\vert_g^{2} \left\vert \nabla \Phi \right\vert_g^{n-2}  \\
- \int_{\mathbb{B}_r(p)} \left\vert \nabla \Phi \right\vert_g^n \\
= \int_{\mathbb{B}_r(p)} \left\vert \nabla \Phi \right\vert_g^{n-2} \left\vert \Phi_m \right\vert^2 \left(  \left\vert \nabla \tilde{\Psi}_{\tau,m} \right\vert_g^{2} - \left\vert \nabla \tilde{\Phi}_m \right\vert_g^{2} \right)
\end{split}
\end{equation*}
Moreover, we have by similar computations that 
\begin{equation*} 
\begin{split}
\int_{\mathbb{B}_r(p)} & \left\vert \nabla\left( \tilde{\Phi}_m -  \tilde\Psi_{\tau,m}\right) - \nabla \ln \left(\tilde\Psi_{\tau,m}\right)_1\left( \tilde\Phi_m -  \tilde\Psi_{\tau,m}\right)  \right\vert_g^2  \left(\left\vert \nabla \tilde\Psi_{\tau,m} \right\vert_g^2 + \tau\right)^{\frac{n-2}{2}} \\
 = & \int_{\mathbb{B}_r(p)} \left\vert \nabla \tilde\Phi_m \right\vert_g^{2}  \left(\left\vert \nabla \tilde\Psi_{\tau,m} \right\vert_g^2 + \tau\right)^{\frac{n-2}{2}}  - \int_{\mathbb{B}_r(p)} \left\vert \nabla \tilde\Psi_{\tau,m} \right\vert_g^2 \left(\left\vert \nabla \tilde\Psi_{\tau,m} \right\vert_g^2 + \tau\right)^{\frac{n-2}{2}} \\
\end{split}
\end{equation*}
and we obtain
\begin{equation*} 
\begin{split}
A:= & \int_{\mathbb{B}_r(p)}  \left\vert \nabla\left( \tilde\Phi_m -  \tilde\Psi_{\tau,m}\right) - \nabla \ln \left(\tilde\Psi_{\tau,m}\right)_1\left( \tilde\Phi_m -  \tilde\Psi_{\tau,m}\right)   \right\vert_g^2   \left( \left\vert \nabla \tilde\Psi_{\tau,m} \right\vert_g^2 + \tau\right)^{\frac{n-2}{2}}   \\  
& + \int_{\mathbb{B}_r(p)} \left\vert \nabla\left( \Phi -  \Psi_{\tau,m}\right) - \nabla \ln \phi_1\left( \Phi -  \Psi_{\tau,m}\right)   \right\vert_g^2 \left\vert \nabla \Phi \right\vert_g^{n-2}÷   \\
\leq & \int_{\mathbb{B}_r(p)}\left(  \left\vert \nabla \tilde{\Psi}_{\tau,m} \right\vert_g^{2} - \left\vert \nabla \tilde{\Phi}_m \right\vert_g^{2}  \right)\left( \left\vert \nabla \Phi \right\vert_g^{n-2} \left\vert \Phi_m \right\vert^2- \left(\left\vert \nabla \tilde\Psi_{\tau,m} \right\vert_g^2 + \tau\right)^{\frac{n-2}{2}}  \right) \\
\end{split}
\end{equation*}
so that splitting the right-hand side into three terms, we have
\begin{equation*} 
\begin{split}
A \leq & \int_{\mathbb{B}_r(p)}\left(  \left\vert \nabla \tilde{\Psi}_{\tau,m} \right\vert_g^{2} - \left\vert \nabla \tilde{\Phi}_m \right\vert_g^{2}  \right)\left( \left\vert \nabla \tilde\Phi_m \right\vert_g^{n-2} -\left\vert \nabla \tilde\Psi_{\tau,m} \right\vert_g^{n-2}   \right) \\
& + \int_{\mathbb{B}_r(p)}\left(  \left\vert \nabla \tilde{\Psi}_{\tau,m} \right\vert_g^{2} - \left\vert \nabla \tilde{\Phi}_m \right\vert_g^{2}  \right)\left( \left\vert \nabla \Phi \right\vert_g^{n-2} \left\vert \Phi_m \right\vert^2 -  \left\vert \nabla \tilde{\Phi}_m \right\vert_g^{n-2} \right) \\
& + \int_{\mathbb{B}_r(p)}\left(  \left\vert \nabla \tilde{\Psi}_{\tau,m} \right\vert_g^{2} - \left\vert \nabla \tilde{\Phi}_m \right\vert_g^{2}  \right) \left(  \left\vert \nabla \tilde\Psi_{\tau,m} \right\vert_g^{n-2} - \left(\left\vert \nabla \tilde\Psi_{\tau,m} \right\vert_g^2 + \tau\right)^{\frac{n-2}{2}}  \right) \end{split}
\end{equation*}
and since the first right-hand term is non positive, we obtain
\begin{equation*} 
\begin{split}
A \leq & \int_{\mathbb{B}_r(p)}\left(  \left\vert \nabla \tilde{\Psi}_{\tau,m} \right\vert_g^{2} + \left\vert \nabla \tilde{\Phi}_m \right\vert_g^{2}  \right)\left\vert \left\vert \nabla \Phi \right\vert_g^{n-2} \left\vert \Phi_m \right\vert^2 -  \left\vert \nabla \tilde{\Phi}_m \right\vert_g^{n-2} \right\vert \\
& + \int_{\mathbb{B}_r(p)}\left(  \left\vert \nabla \tilde{\Psi}_{\tau,m} \right\vert_g^{2} + \left\vert \nabla \tilde{\Phi}_m \right\vert_g^{2}  \right) \left\vert  \left\vert \nabla \tilde\Psi_{\tau,m} \right\vert_g^{n-2} - \left(\left\vert \nabla \tilde\Psi_{\tau,m} \right\vert_g^2 + \tau\right)^{\frac{n-2}{2}}  \right\vert .
\end{split}
\end{equation*}
Knowing that
$$ \left\vert \nabla \Phi \right\vert^2 = \left\vert \Phi_m \right\vert^2\left\vert \nabla \tilde\Phi_m \right\vert^2 + \left\vert \nabla\left\vert \Phi_m \right\vert \right\vert^2 + \left\vert \nabla \left(\Phi-\Phi_m\right) \right\vert_g^2  $$
and that $\Phi_m$ and $\nabla \Phi_m$ converge a.e to $\Phi$ and $\nabla \Phi$, it is clear by the dominated convergence theorem that the first integral converges to $0$ as $m\to +\infty$. Thanks to the a priori estimates of Claim \ref{clapproxdeltanharm}, up to a subsequence, $\Psi$ is the limit as $(\tau,m) \to (0,+\infty)$ of $\tilde\Psi_{\tau,m}$. We easily obtain that $\Psi$ is also the limit of $\Psi_{\tau,m}$ and that the second integral in the previous inequality converges to $0$ as $(\tau,m)\to (0,+\infty)$. Moreover, if we set
$$ Z = \{ x \in \mathbb{B}_r(p) ; \left\vert \nabla \Psi \right\vert_g^2 + \left\vert \nabla \Phi \right\vert_g^2 = 0  \}, $$
we have 
$$ \left\vert \nabla\left( \Phi -  \Psi \right) - \nabla \ln \psi_1\left( \Phi -  \Psi \right)   \right\vert_g^2 + \left\vert \nabla\left( \Phi -  \Psi \right) - \nabla \ln \phi_1\left( \Phi -  \Psi \right)   \right\vert_g^2 = 0 \text{ in } \mathbb{B}_r(p)\setminus Z. $$
so that
\begin{equation*} 
\begin{split}
 \int_{\mathbb{B}_r(p)}  \left\vert \nabla\left( \Phi -  \Psi \right)  \right\vert_g^2 = \int_{\mathbb{B}_r(p)\setminus Z}   \left\vert \nabla\left( \Phi -  \Psi \right)  \right\vert_g^2 \leq 2 \int_{\mathbb{B}_r(p)\setminus Z}  \left(   \left\vert \nabla \ln \psi_1 \right\vert^2 + \left\vert \nabla \ln \phi_1 \right\vert^2 \right) \left\vert \Phi-\Psi \right\vert^2 \\
 \leq 2 C \delta_{n,g}^{\frac{2}{n}} \int_{\mathbb{B}_r(p)} \frac{\left\vert \Phi-\Psi \right\vert^2}{\left(r-\left\vert x \right\vert\right)^2} \leq  \tilde{C} \delta_{n,g}^{\frac{2}{n}}   \int_{\mathbb{B}_r(p)}  \left\vert \nabla\left( \Phi -  \Psi \right)  \right\vert_g^2 
 \end{split}
\end{equation*}
where the second inequality comes from classical weak estimates on the gradient when it satisfies a $L^{\infty}$ $\eps$-regularity property and the third one comes from a classical Hardy inequality. Letting $\delta_{n,g}$ be small enough gives that $\Phi= \Psi$.
\end{proof}

\bibliographystyle{alpha}
\bibliography{mybibfile}

\begin{thebibliography}{GWZ}
\bibitem[AH06]{AmmannHumbert}
B. Ammann, E. Humbert, { \em The second {Y}amabe invariant}, J. Funct. Anal., {\bf 235}, 2006, no.2, 377--412
\bibitem[CES03]{ces}
B.~Colbois, A.~El Soufi,
{\em Extremal eigenvalues of the {L}aplacian in a conformal class of metrics: the `conformal spectrum'},
Ann.\ Global Anal.\ Geom.\ {\bf 24} (2003), no.4, 337--349.
\bibitem[CD94]{cd}
B. Colbois, J. Dodziuk, {\em Riemannian metrics with large {$\lambda_1$}}, Proc. Amer. Math. Soc, {\bf 122}, 1994, no.3, 905--906
\bibitem[Col86]{ColinV}
Y. Colin de Verdi\`ere, {\em Sur la multiplicit\'{e} de la premi\`ere valeur propre non nulle du laplacien}, Comment. Math. Helv., {\bf 61}, 1986, no.2, 254--270
\bibitem[Cou01]{Cou}
P. Courilleau, {\em A compactness result for {$p$}-harmonic maps}, Differential Integral Equations, { \bf 14}, 2001, no.1, 75--84
\bibitem[ESI84]{EI}
A.~El Soufi, S.~Ilias, {\em  Le volume conforme et ses applications d'apres Li et Yau}, S\'em. Th\'eorie Spectrale et G\'eom\'etrie, Institut Fourier, 1983?1984, No.VII, (1984).   
%
\bibitem[ESI86]{EI2}
A.~El Soufi, S.~Ilias, {\em  Immersions minimales, premi\`ere valeur propre du laplacien et volume conforme}, Mathematische Annalen, 1986, {\bf 275}, 257-267
\bibitem[ESI00]{EI3}
A.~El Soufi, S.~Ilias, {\em Riemannian manifolds admitting isometric immersions by their first eigenfunctions}, Pacific J.\ Math. {\bf 195}, 2000, 91--99.
\bibitem[ESI03]{EI4}
A.~El Soufi and S.~Ilias, {\em Extremal metrics for the first eigenvalue of the Laplacian in a conformal class}, Proc. Amer. Math. Soc. {\bf 131}, 2003, 1611-1618.
\bibitem[ESI08]{EI5}
A.~El Soufi, S.~Ilias, {\em Laplacian eigenvalue functionals and metric deformations on compact manifolds}, Journal of Geometry and Physics, {\bf 58}, Issue 1, January 2008, 89-104
\bibitem[FS16]{fs}
A.~Fraser, R.~Schoen, {\em Sharp eigenvalue bounds and minimal surfaces in the ball}, Invent.\ Math. {\bf 203}, 2016, 823--890.
\bibitem[FS20]{fs20}
A. Fraser, R. Schoen, {\em Some results on higher eigenvalue optimization},  Calc. Var. Partial Differential Equations, {\bf 59}, 2020, 5, Paper No. 151, 22
\bibitem[FN99]{FN}
L. Friedlander, N. Nadirashvili, {\em A differential invariant related to the first eigenvalue of
              the {L}aplacian}, Internat. Math. Res. Notices, 1999, 17, 939--952,
\bibitem[GP22]{GurskyPerez}
M.J. Gursky, S. P\'{e}rez-Ayala,  {\em Variational properties of the second eigenvalue of the conformal {L}aplacian}, J. Funct. Anal., {\bf 282}, 2022, no.8, Paper No. 109371, 60
\bibitem[HL97]{HL}
Q. Han, F. Lin , {\em Elliptic partial differential equations}, Courant Lecture Notes in Mathematics, {\bf 1}, New York University, Courant Institute of Mathematical
              Sciences, New York; American Mathematical Society, Providence,
              RI, 1997, {x+144},
\bibitem[Has11]{hassann}
A. Hassannezhad, {\em Conformal upper bounds for the eigenvalues of the {L}aplacian and {S}teklov problem}, J. Funct. Anal., {\bf 261}, 2011, no 12, 3419--3436
\bibitem[Her70]{hersch}
J.~Hersch, {\em Quatre propri\'{e}t\'{e}s isop\'{e}rim\'{e}triques de
membranes sph\'{e}riques homog\`{e}nes}, C.R.\ Acad.\ Sci.\ Paris\ S\'{e}r. A-B {\bf 270}, 1970, A1645--A1648.
\bibitem[Kar21]{karpukhin-3}
M. Karpukhin, 
{\em Index of minimal spheres and isoperimetric eigenvalue inequalities}, Invent. Math., {\bf 223}, 2021, no. 1, 335--377
%
\bibitem[KM21]{KMed}
M. Karpukhin,  V. Medvedev, {\em On the {F}riedlander-{N}adirashvili invariants of surfaces}, Math. Ann., {\bf 379}, 2021, 3-4, 1767--1805,
\bibitem[KM22]{KM}
M. Karpukhin, A. M\'{e}tras, {\em Laplace and {S}teklov extremal metrics via {$n$}-harmonic maps}, J. Geom. Anal., {\bf 32}, 2022, no.5, Paper No. 154, 36
\bibitem[KNPP17]{knpp}
M.A.~Karpukhin, N.~Nadirashvili, A.~Penskoi, I.~ Polterovich, {\em An isoperimetric inequality for Laplace eigenvalues on the sphere}. arXiv: 1706.05713.
To appear in J.\ Differential\ Geom..
%
\bibitem[KNPP20]{knpp20}
M.A.~Karpukhin, N.~Nadirashvili, A.~Penskoi, I.~ Polterovich, {\em Conformally maximal metrics for Laplace eigenvalues on surfaces}, arXiv:2003.02871v2
%
\bibitem[KS20]{KS}
M.A.~Karpukhin, D.~L. Stern {\em Min-max harmonic maps and a new characterization of conformal eigenvalues}
arXiv preprint (2020),	arXiv:2004.04086, 69 pp
\bibitem[KS22]{KS2}
M.A.~Karpukhin, D.~L. Stern {\em Existence of harmonic maps and eigenvalue optimization in higher dimensions}
arXiv preprint (2022), arXiv:2207.13635, 60 pp
\bibitem[Kok14]{kokarev}
G.~Kokarev, {\em Variational aspects of {L}aplace eigenvalues on {R}iemannian surfaces}, Adv.\ Math.\ {\bf 258}, 2014, 191--239.
%
\bibitem[Kim22]{kim}
H.N. Kim, {\em Maximization of the second {L}aplacian eigenvalue on the
              sphere}, Proc. Amer. Math. Soc., Proceedings of the American Mathematical Society, {\bf 150}, 2022, no.8, 3501--3512
\bibitem[Kor93]{Korevaar}
N.~Korevaar, {\em Upper bounds for eigenvalues of conformal metrics}, J. Differential Geom., {\bf37}, 1993, 1, 73--93
%
\bibitem[HL87]{HardtLin}
R. Hardt, F-H Lin, {\em Mappings minimizing the {$L^p$} norm of the gradient, Comm. Pure Appl. Math.}, Communications on Pure and Applied Mathematics, {\bf 40}, 1987, no.5, 555--588
\bibitem[LY82]{liyau}
P.~Li, S.T.~Yau, {\em A new conformal invariant and its applications to the Willmore conjecture and the first eigenvalue of compact surfaces}, Invent..\ Math.\ {\bf69}, 1982, 269--291.
\bibitem[MS19a]{MS3}
H.~Matthiesen, A.~Siffert,
{\em Handle attachement and the normalized first eigenvalue}, preprint.
\bibitem[MY96]{mouyang}
L. Mou, P. Yang, {\em Regularity for {$n$}-harmonic maps}, J. Geom. Anal., {\bf 6}, 1996, no.1, 91--112	
\bibitem[Nad96]{nadirashvili}
N.~Nadirashvili, {\em Berger's isoperimetric problem and minimal immersions of surfaces}, Geom.\ Func.\ Anal.\ {\bf 6}, 1996, 877-897.
%
\bibitem[Nad02]{nadirashvili-2}
N.~Nadirashvili, {\em Isoperimetric inequality for the second eigenvalue of a sphere}, J.\ Differential\ Geom.\ {\bf 61}, 2002, 335--340. 
\bibitem[Pet14]{petrides-1}
R.~Petrides, {\em Maximization of the second conformal eigenvalue of spheres}, Proc. Amer. Math. Soc., {\bf142}, 2014, 7 , 2385--2394
%
\bibitem[Pet15]{petrides-0}
R. Petrides, {\em On a rigidity result for the first conformal eigenvalue of the
              {L}aplacian}, J. Spectr. Theory, {\bf 5}, 2015, no.1, 227--234
\bibitem[Pet14a]{petrides}
R.~Petrides, {\em Existence and regularity of maximal metrics for the first Laplace eigenvalue on surfaces}, Geom.\ Funct.\ Anal.\ {\bf24}, 2014, 1336--1376.
%
\bibitem[Pet18]{petrides-2}
R.~Petrides, {\em  On the existence of metrics which maximize Laplace eigenvalues on surfaces}, Int.\ Math.\ Res.\ Not. , {\bf14}, 2018, 4261--4355.
\bibitem[Pet22]{petrides-7}
R.~Petrides, {\em A variational method for functionals depending on eigenvalues}, submitted
\bibitem[PT22]{pt}
R. Petrides, D. Tewodrose, {\em Subdifferentials and critical points of eigenvalue functionals}
\bibitem[Sar22]{Sarsa}
S. Sarsa, {\em Note on an elementary inequality and its application to the
              regularity of {$p$}-harmonic functions}, Ann. Fenn. Math., {\bf 47}, 2022, no.1, 139--153
\bibitem[Str94]{Strzelecki}
Strzelecki, Pawe\l, {\em Regularity of {$p$}-harmonic maps from the {$p$}-dimensional ball into a sphere}, Manuscripta Math., {\bf 82}, 1994, no.3-4, 407--415
\bibitem[Uhl77]{Uhlenbeck}
K. Uhlenbeck, K., {\em Regularity for a class of non-linear elliptic systems}, Acta Math., {\bf 138}, 1977, no.3-4, 219--240
\bibitem[YY80]{yangyau}
P.C.~Yang, S.T.~Yau, {\em Eigenvalues of the {L}aplacian of compact {R}iemannian surfaces and minimal submanifolds}, Ann. Scuola Norm. Sup. Pisa Cl. Sci. (4) {\bf 7}, 1980, no.1, 53--63.
\end{thebibliography}

\nocite{*}

\end{document}